\titleformat{\section}{\raggedright\bfseries}{$\bm{\Rmnum{\thesection}}$}{1em}{}
\titleformat{\subsection}{\raggedright\bfseries}{\thesubsection}{1em}{}
\newcommand{\f}[2]{\frac{#1}{#2}}
\newcommand{\rmnum}[1]{\mathrm{\romannumeral #1}}
\newcommand{\Rmnum}[1]{\mathrm{\expandafter\@slowromancap\romannumeral #1@}}
\newcommand{\df}{\mathsf{d}}
\newcommand{\id}{\mathsf{id}}
\newcommand{\ha}{\alpha}
\newcommand{\hb}{\beta}
\newcommand{\p}{\partial}
\newcommand{\pd}[2]{\frac{\partial#1}{\partial#2}}
\newcommand{\hess}{\mathsf{Hess}}
\renewcommand{\hom}{\mathsf{Hom}}
\renewcommand{\dim}[1]{\mathsf{dim}_{#1}}
\renewcommand{\exp}{\mathsf{exp}}
\renewcommand{\mod}{\mathsf{~mod~}}
\renewcommand{\deg}{\mathsf{deg~}}
\renewcommand{\det}{\mathsf{det~}}
\newcommand{\res}{\mathsf{Res}}
\newcommand{\sgn}[1]{\mathsf{sgn}(#1)}
\newcommand{\sm}[1]{\makebox[0.5\width]{#1}}         
\newcommand{\C}{\mathbb{C}}
\newcommand{\Q}{\mathbb{Q}}
\newcommand{\Z}{\mathbb{Z}}
\newcommand{\ainfty}{$\mathsf{A_\infty}$-}
\newcommand{\into}{\hookrightarrow}
\newcommand{\lra}[1]{\langle #1 \rangle}
\newcommand{\lrf}[1]{\left\{#1\right\}}
\newcommand{\lr}[1]{\left(#1\right)}
\providecommand{\abs}[1]{\left\lvert#1\right\rvert}
\newcommand{\hch}{\mathsf{HH}^\bullet}               
\newcommand{\fix}{\mathsf{Fix}}                      
\newcommand{\jac}{\mathsf{Jac}}                      
\newcommand{\e}{\mathsf{e}}                          
\newcommand{\bdf}{\mathsf{b}}                        
\newcommand{\val}[1]{\mathsf{Val}(#1)}
\newcommand{\pmlr}[2]{\leftidx{_+}(#1,#2)_-}
\newcommand{\pplr}[2]{\leftidx{_+}(#1,#2)_+}
\newcommand{\mmlr}[2]{\leftidx{_-}(#1,#2)_-}
\theoremstyle{plain}
\newtheorem{thm}{Theorem}[section]
\newtheorem{prop}[thm]{Proposition}
\newtheorem{lem}[thm]{Lemma}
\newtheorem{coro}[thm]{Corollary}
\theoremstyle{definition}
\newtheorem{defn}{Definition}[section]
\newtheorem{ex}{Example}[section]
\theoremstyle{remark}
\newtheorem*{rem}{Remark}
\begin{document}
\numberwithin{equation}{section}
\tikzset{-latex-/.style={decoration={
  markings,
  mark=at position #1 with {\arrow{latex}}},postaction={decorate}}}
\title{{G-twisted braces and orbifold Landau-Ginzburg Models}}
\author{Weiqiang He, Si Li and Yifan Li}
\date{}

\newcommand{\Addresses}{{
  \bigskip
  \footnotesize

  Weiqiang He, \textsc{YMSC, Tsinghua University,
    Beijing 100084, China}\par\nopagebreak
  \textit{E-mail address}: \texttt{wqhe@math.tsinghua.edu.cn}

  \medskip

  Si Li, \textsc{YMSC, Tsinghua University,
    Beijing 100084, China}\par\nopagebreak
  \textit{E-mail address}: \texttt{sili@mail.tsinghua.edu.cn}

  \medskip

  Yifan Li, \textsc{YMSC, Tsinghua University,
    Beijing 100084, China}\par\nopagebreak
  \textit{E-mail address}: \texttt{yf-li14@mails.tsinghua.edu.cn}

}}

\maketitle

\begin{abstract} Given an algebra with group $G$-action, we construct brace structures for its $G$-twisted Hochschild cochains. An an application, we construct $G$-Frobenius algebras for orbifold Landau-Ginzburg B-models and present explicit orbifold cup product formula for all invertible polynomials.

\end{abstract}
\tableofcontents

\section{Introduction}
Consider a triple $(A,W,G)$ (we call $G$-twisted curved algebra), where $A$ is an associative algebra with a compatible $G$-action and $W$ is a $G$-invariant central element of $A$. Associated to $A$, we have Hochschild cochains $C^\bullet(A,A)$ where many homological constructions are captured by brace structures \cite{G93,GJ94,VG95}. 
In this paper, we generalize such brace structures to $G$-twisted Hochschild cochains
$
C^\bullet(A,A[G])
$ (Definition \ref{def: twisted brace structure}) and prove a $G$-twisted version of higher pre-Jacobi identities (Proposition \ref{thm:twisted high pre-j identity}).

 Let
$$
\p_H: C^\bullet(A,A[G])\to C^{\bullet+1}(A,A[G])
$$
be the Hochschild differential. Let $\mathsf{HH}^\bullet(A,A[G])$ be the Hochschild cohomology with a natural cup product $\cup$. We show that $\cup$ satisfies a $G$-twisted commutativity relation (see also \cite{S17}) and the $G$-invariant subspace $\mathsf{HH}^\bullet(A,A[G])^G$ is a Gerstenhaber algebra (Theorem \ref{thm: dga on G-twisted Hochschild}). When $G$ is a finite group, a result of \c{S}tefan (Cor. 3.4 in \cite{St95}) implies that $\hch(A[G],A[G])$ and $\hch(A,A[G])^G$ are isomorphic as graded vector spaces (see also \cite{DE05}). Using $G$-twisted brace structures, we extend this to an isomorphism as Gerstenhaber algebras (Theorem \ref{thm: comparison of Hochschild}).

Let $W$ be a $G$-invariant central element of $A$.  It leads to a curving differential (Definition \ref{def: differentials})
$$
  \df_W: C^\bullet(A,A[G])\to C^{\bullet-1}(A,A[G])
$$
such that $(C^\bullet(A,A[G]),\p_H, \df_W)$ forms a mixed complex. All the above constructions apply to $(\p_H+\df_W)$-cohomologies which are instead $\Z/2\Z$-graded.

Our study of $(A,G,W)$ is motivated by mirror symmetry between two singularity theories: Landau-Ginzburg (LG) A-model and Landau-Ginzburg (LG) B-model. Such LG/LG mirror symmetry is parallel to the well-studied Calabi-Yau/Calabi-Yau and Toric/Landau-Ginzburg mirror symmetry.  The basic data for LG model consists of $(A,W,G)$ where $W$ is a holomorphic function (called the \emph{superpotential})
$$
W: X\to \C
$$
on a complex variety $X$ and  $G$ is a group (called the \emph{orbifold group}) acting on $X$ preserving $W$.  $A=\mathcal O(X)$ is the structure ring of $X$. A version of compact type Hochschild cohomology (\cite{HMS74,PP10,P11}, see Definition \ref{defn-Hochschild-cohomology}) turns out to be relevant for the state spaces of orbifold LG models \cite{V89,IV90}.

In this paper we focus on the case when $W:\mathbb{C}^N \to\mathbb{C}$ is a weighted homogeneous polynomial
$$
  W(\lambda^{q_i}x_i)=\lambda W(x_i), \quad \forall \lambda \in \C^*
$$
with an isolated critical point at the origin and contains no monomials of the form $x^ix^j$ for $i\neq j$. Here $q_i\in (0,{\f{1}{2}}]\cap \Q$ is called the weight of $x_i$. The orbifold group $G$ will be a subgroup of $G_W$ where
\begin{equation*}
G_{W} =
\left\{(\lambda_1,\dots,\lambda_N)\in(\mathbb{C}^{\times})^N\Big\vert\,W(\lambda_1\,x_1,\dots,\lambda_N\,x_N)=W(x_1,\dots,x_N)\right\}.
\end{equation*}
There exists a construction of LG/LG mirror pairs which originates from a physical construction of Berglund-H\"ubsch \cite{BH93} and is further completed by Krawitz \cite{K09}. In this construction,  the polynomial $W$ is required to be \emph{invertible} \cite{K94,CR11,CIR14}, 
i.e., the number of variables must equal to the number of monomials of $W$. By rescaling the variables, we can always write $W$ as
\begin{equation*}
W=\sum_{i=1}^{N}\prod_{j=1}^{N}x_j^{a_{ij}}.
\end{equation*}
We denote its \emph{exponent matrix} {by $E_W =\left(a_{ij}\right)_{N\times N}$. The mirror polynomial of $W$ is
\[
W^T=\sum_{i=1}^{N}\prod_{j=1}^{N}x_j^{a_{ji}},
\]
i.e., the exponent matrix $E_{W^T}$ of the mirror polynomial is the transpose matrix of $E_W$. The mirror construction between orbifold groups $G^T\subset G_{W^T}$ and $G\subset G_W$ is more involved but explicitly known. It has the property that bigger $G$ corresponds to smaller $G^T$ and vice versa.

A general mirror theorem is proved (see\cite{HLSW15} and references therein for  related works) between LG A-model (FJRW theory \cite{FJR13}) of $(W^T, G_{W^T})$ and LG B-model (Saito-Givental theory \cite{S83,G01}) of $(W, G=\text{trivial})$. 
Correlation functions for LG B-model when $G$ is not a trivial group is less known.

As an application of methods developed in this paper, we construct ${G}$-Frobenius algebraic structure in the sense of Kaufmann \cite{K03,K06} for orbifold Landau-Ginzburg B-models (Theorem \ref{thm: invertible LG gives a G-Frob algebra.}) (see also \cite{BTW16} for certain axiomatic discussion of Kaufman's definition in orbifold LG models). Moreover, it is the first time we are able to compute explicit orbifold cup product formula for all invertible polynomials. This computation is based on a construction of explicit homotopy retract (Section \ref{section-Koszul})
\[\begin{tikzpicture}
\matrix (a) [matrix of math nodes, row sep=3.5em, column sep=4em, text height=2ex, text depth=0.9ex]
{\big(C^{\bullet}(\overline{A},A[G]),\p_H+\df_W\big) & \big(K^{\bullet}(A,A[G]),\p_K + \tilde{\df}_W\big)\\};
\path[-latex]
(a-1-1.2) edge node[above,font=\scriptsize] {$\tilde{\Phi}^*$} (a-1-2.178.5);
\path[latex-]
(a-1-1.-2) edge node[below,font=\scriptsize] {${\Upsilon}^*$} (a-1-2.-178.5);
\draw[-latex]
(a-1-1.-170)  arc (-40:-320:11pt);
\node[left,outer sep=20pt,font=\scriptsize] at (a-1-1.-180) {$\tilde{\mathsf{H}}^*$};
\end{tikzpicture}\]
between $G$-twisted reduced Hochschild complex $C^{\bullet}(\overline{A},A[G])$ and $G$-twisted Koszul complex $K^{\bullet}(A,A[G])$. This homotopy retract is obtained from a version of homological perturbation from a construction of Shepler and Witherspoon \cite{SW11} in the case without $W$. The result is summarized as follows.

According to \cite{KS92}, an invertible polynomial is classified as a direct sum of three
types of elementary invertible polynomials:
\begin{description}
\item [(a)] Fermat type: $W = x_1^n$.
\item [(b)] Loop type: $W=x_{1}^{n_1}x_2+x_2^{n_2}x_3+\cdots + x_N^{n_N}x_1$.
\item [(c)] Chain type: $W= x_{1}^{n_1}x_2+x_2^{n_2}x_3+\cdots + x_N^{n_N}$.
\end{description}
Let $g\in G_W$, $g(x_i)=\lambda_i x_i$ where not all $\lambda_i$'s are $1$. We define a version of $g$-twisted Hessian $\hess^g(W)$ (Definition \eqref{defn-Hession}). For example, for loop type polynomial,  the $g$-twisted Hessian has the form
$$
\hess^{g}(W)= \f{(-1)^{N+1}+n_1n_2\cdots
n_N}{(1-\lambda_1)(1-\lambda_2)\cdots(1-\lambda_N)}x_1^{n_1-1}x_2^{n_2-1}\cdots x_N^{n_N-1}.
$$
Let $\mathfrak{1}_g$ be the generator of the $g$-twisted sector (see \eqref{eq: generator}). Then our orbifold cup product formula reads
$$
  \mathfrak{1}_g \cup \mathfrak{1}_{g^{-1}}=(-1)^{\f{N(N-1)}{2}}\hess^{g}(W) \mathfrak{1}_{\e}
$$
where $\e\in G$ is the identity.  This explicitly determines the full orbifold cup product  (Theorem \ref{thm: cup product formula for General invertible polynomial}).

Alternatively, there exists categorical approach to orbifold Landau-Ginzburg models in terms of the dg-category $\text{MF}(A[G],W)$ of matrix factorizations.  In the case when $G$ is trivial, Hochschild cohomology is computed by Dyckerhoff in terms of compact generators \cite{D11}. Alternately, there is another approach via curved algebra which we follow in this paper. In \cite{PP10}, Polishchuk and Positselski identify Hochschild cohomology and compact type Hochschild cohomology of $\text{MF}(A, W)$.  Segal \cite{S13} constructs a quasi-isomorphism between compact type Hochschild complex of $\text{MF}(A, W)$ and that of the curved algebra $(A,W)$. C\u{a}ld\u{a}raru and Tu compute the compact type Hochschild (co)-homology of $(A,W)$ explicitly in \cite{CT13}. In the orbifold case when $G$ is nontrivial, Hochschild cohomology is computed in terms of compact generators \cite{PV12} or directly in terms of curved algebras\cite{CT13,S13}. These two approaches are identified in \cite{T14}. Based on the result of \cite{PP10}, Shklyarov \cite{S17} shows that the categorical cup product of matrix factorizations is identical to the Hochschild cup product of $G$-twisted curved algebras.

On the other hand, it is very difficult to compute orbifold cup product through categories. In \cite{S17}, Shklyarov deduces a formula of cup product in terms of certain complicated unknown coefficient $\sigma_{g,h}$. In appendix of \cite{S17}, Basalaev and Shklyarov actually obtain interesting closed formula of cup product in some special cases (two variable chain polynomial and Fukaya category of surface).

In contrast to Shklyarov's approach, we use an explicit homotopy retract between Hochschild complex and Koszul complex to deduce a combinatorial formula. This allows us to explicitly compute orbifold cup product for all invertible polynomials.  Our result confirms a conjecture by Basalaev and Shklyarov in Appendix A of \cite{S17}. It is a very interesting question to compare our results with Shklyarov's formula.

\noindent{\bf Acknowledgements.}
We would like to thank Xiaojun Chen, Yu Liu, Junwu Tu, Bin Wang, Guodong Zhou for helpful discussions.
S. L. is partially supported by Grant 20151080445 of Independent
Research Program at Tsinghua University. W. H. is partially supported by Tsinghua Postdoc Grant 100410019.

\section{\boldmath$G$\unboldmath-twisted Hochschild complexes for curved algebras}

In this paper we fix a field $k$ of characteristic zero.  All vector spaces and algebras are defined over $k$. Let $V$ be a $\Z$ or $\Z/2\Z$-graded $k$-vector space. Let $V_d=\{x\in V\mid |x|=d\}\subset V$ denote the subspace of degree $d$ elements. Let $V[m]$ denote the degree shifting by $m$ such that
$$
     V[m]_d= V_{d+m}.
$$
When $V$ is $\Z/2\Z$-graded, the above degree shifting is understood modulo $2\Z$. For example
$$
  V[1]_{\bar 0}=V_{\bar 1}, \quad V[1]_{\bar 1}=V_{\bar 0}, \quad \text{where}\ \quad \bar 0,\bar 1\in \Z/2\Z.
$$

\begin{defn} A \emph{curved algebra} is a pair $(A,W)$ where $A$ is an associative algebra and $W$ is a central element of $A$. We will sometimes denote the pair $(A,W)$ for a curved algebra by $A_W$.
\end{defn}

\begin{defn}\label{def:G-curved algebra}
Let $G$ be a group. A \emph{$G$-twisted curved algebra} is a triple $(A,W, G)$ where $A$ is an associative algebra with $G$-equivariant product and $W$ is a $G$-invariant central element of $A$. $\forall g \in G$ and $a \in A$, we denote by $\leftidx{^{g}}a$ the left action of $a$ by $g$. The identity element of $G$ is always denoted by $\e$.
\end{defn}

\begin{defn}\label{def:G-twisted curved alg} Given a $G$-twisted curved algebra $(A,W,G)$, we define its \emph{$G$-orbifolding} to be the $G$-twisted curved algebra $(A[G], W, G)$ where
\begin{description}
\item[(a)] $A[G]=A\otimes_k k[G]$ is the crossed product algebra where $k[G]$ is the group algebra. The product is
$$
(a_1g_1)\cdot (a_2g_2) \coloneqq  (a_1\cdot \leftidx{^{g_1}}a_2)g_1g_2,\quad \text{where}\quad a_1,a_2\in A, g_1,g_2\in G.
$$

\item [(b)] $G$-action: \begin{align*}
G\times A[G] & \rightarrow A[G]\\
(g,ag') & \mapsto g.(ag') \coloneqq (\leftidx{^{g}}a) gg'g^{-1}
\end{align*}

\item [(c)] $G$-invariant central element  $=W\e$.
\end{description}
\end{defn}

In this section, we construct $G$-twisted brace structures ({Definition \ref{def: twisted brace structure}}) on $G$-twisted Hochschild cochains ({Definition \ref{G-twisted Hochschild cochains}}). This generalizes the usual brace operations on Hochschild cochains of associative algebras. As an application, we obtain a $G$-twisted commutative structure on $G$-twisted Hochschild cohomology  ({Theorem \ref{thm: dga on G-twisted Hochschild}}) and a comparison theorem between two versions of $G$-twisted Gerstenhaber algebras ({Theorem \ref{thm: comparison of Hochschild}} and {Theorem \ref{thm: comparison of curved Hochschild}}). This will be applied in the next section to establish our main results on $G$-Frobenius algebras of orbifold Landau-Ginzburg models.


\subsection{Hochschild cochains and brace structure}
\begin{defn}\label{def:Hochschild cochain}
Given an associative algebra $A$, we denote \emph{Hochschild cochains} and \emph{compact type Hochschild cochains} of $A$  by
\begin{align}
C^\bullet(A,A) =\mathop{\prod}\limits_{p=0}^\infty C^p(A,A), \quad
C^\bullet_c(A,A) = \mathop{\bigoplus}\limits_{p=0}^\infty C^p(A,A),
\end{align}
where
\begin{align*}
 C^p(A,A)=\hom_k(A^{\otimes p}, A).
\end{align*}
Given $\phi \in C^p(A,A)$, we write $|\phi|=p$ for its degree.
\end{defn}

Following  \cite{G93,GJ94,GV94,CS99,T04}, we can define higher operations on Hochschild cochains as a generalization of Gerstenhaber product introduced in \cite{G63}. Given $\phi\in C^p(A,A)$, $\phi_i\in C^{p_i}(A,A)$ for $i=1,\cdots,k$ and $a_1,a_2\cdots \in A$, we define (if $k\leq p$)
\begin{align}\label{eq: brace on Hoch cochains}
&\phi\{\phi_1,\phi_2, \cdots \phi_k\}(a_1 \otimes a_2 \otimes \cdots )\nonumber\\
= & \sum\limits_{\bm{I}\in\mathcal{I}} (-1)^{\sum\limits_{j=1}^k(i_j-1)(\abs{\phi_j}-1)} \phi\big(a_1 \otimes \cdots \otimes a_{i_1-1} \otimes \phi_1(a_{i_1}\otimes \cdots) \otimes \cdots\nonumber\\
 &\qquad\qquad\qquad\qquad\qquad\quad~~ \cdots\otimes a_{i_j-1}\otimes \phi_j(a_{i_j}\otimes \cdots)\otimes \cdots\big),
\end{align}
where
\begin{equation*}
\mathcal{I} = \lrf{\bm{I} = ( i_1 < i_2 < \cdots < i_k)\mid i_1>0 \text{ and } \forall 1 \leqslant j < k, i_{j} + \abs{\phi_j} \leqslant i_{j+1} }.
\end{equation*}
If $k>p$, we set  $\phi\{\phi_1,\phi_2, \cdots \phi_k\}=0$. The map
\begin{align*}
C^\bullet(A,A) & \rightarrow  C^\bullet(C^\bullet(A,A),C^\bullet(A,A))\\
\phi & \mapsto  \phi\lrf{\cdots}
\end{align*}
gives brace structures \cite{G93, GJ94,VG95}. It satisfies the following \emph{higher pre-Jacobi identity}
\begin{align}\label{eq: Higher prejacobi}
\nonumber&\phi \lrf{\phi_1,\phi_2 \cdots \phi_n}\lrf{\psi_1,\psi_2 \cdots \psi_m}\\
=&\sum \pm \phi\lrf{\psi_1\cdots \psi_{j_1-1},\phi_1\lrf{\psi_{j_1}\cdots}\cdots \psi_{j_n-1},\phi_n\lrf{\psi_{j_n}\cdots}\cdots \psi_m}.
\end{align}

When $k = 1$, we get the Gerstenhaber product
\begin{equation}
\phi \lrf{\phi_1} = \phi \circ \phi_1.
\end{equation}

Pre-Jacobi identity implies that the Gerstenhaber bracket on $C^\bullet(A,A)$ defined by
\begin{equation}\label{eq: Gerstenhaber bracket}
\lrf{\phi_1,\phi_2}= \phi_1 \lrf{\phi_2} - (-1)^{ (\abs{\phi_1} - 1) (\abs{\phi_2} - 1) } \phi_2\lrf{\phi_1},\quad \forall \phi_1,\phi_2\in C^\bullet(A,A),
\end{equation}
gives a graded Lie algebra structure on $C^\bullet(A,A)[{1}]$.

 The product $\cdot$ on $A$ gives rise to a Hochschild cochain
$$
  m_2\in C^2(A, A), \quad m_2(a_1, a_2)=a_1\cdot a_2, \quad a_i\in A.
$$
The associativity of the product is equivalent to
$$
  \{m_2, m_2\}=0.
$$

\begin{defn} The \emph{cup product} $\cup$ and \emph{Hochschild differential} $\p_H$ on $C^\bullet(A,A)$ are defined by
\begin{align}
  \phi_1\cup \phi_2= (-1)^{|\phi_1|(\abs{\phi_2}-1)} m_2 \{\phi_1,\phi_2\}, \quad    \p_H (\phi) \coloneqq (-1)^{\abs{\phi}-1} \{m_2, \phi \}.
\end{align}
\end{defn}

Higher 
pre-Jacobi identity for brace operations implies the following higher homotopies,
\begin{align}\label{untwisted higher homotopy}
&\p_H(\phi)\lrf{\phi_1,\phi_2,\cdots, \phi_n} \nonumber \\
=&~ (-1)^{\xi_n} \p_H \big(\phi\lrf{\phi_1,\cdots, \phi_n}\big) - \sum_{k=1}^n(-1)^{\xi_k}\phi\lrf{\phi_1,\cdots, \phi_{k-1},\p_H(\phi_k),\phi_{k+1},\cdots, \phi_n}\nonumber\\
&+(-1)^{\abs{\phi_1}\xi'_1} \phi_1 \cup \phi\lrf{\phi_2,\phi_3,\cdots, \phi_n} - (-1)^{\abs{\phi}\abs{\phi_n}+\xi_{n-1}\xi'_{n-1}} \phi\lrf{\phi_1,\phi_2,\cdots, \phi_{n-1}} \cup \phi_n\nonumber\\
&+\sum_{k=1}^{n-1} (-1)^{\xi_k+\abs{\phi_k}\abs{\phi_{k+1}}} \phi\lrf{ \phi_1,\cdots, \phi_k \cup \phi_{k+1},\cdots,
\phi_{n}},
\end{align}
with
\begin{equation*}
\begin{dcases}\xi_k = \abs{\phi_1}+\abs{\phi_2}+\cdots+ \abs{\phi_k}-k,\\
\xi'_k = \abs{\phi_{k+1}}+\abs{\phi_{k+2}}+\cdots+ \abs{\phi_n}-(n-k).
\end{dcases}
\end{equation*}

Here are some identities for lower braces
\begin{itemize}

\item $n=1$.  We find
\begin{align}\label{Ger2}
    (-1)^{\abs{\phi_1}}\p_H(\phi\{\phi_1\}) + (\p_H\phi)\{\phi_1\} - (-1)^{|\phi_1|} \phi\{\p_H\phi_1\} = \phi_1 \cup \phi-(-1)^{|\phi||\phi_1|}\phi \cup \phi_1.
\end{align}
This says that $\cup$ on $C^\bullet(A,A)$ is commutative up to homotopy.  Switching the role of $\phi, \phi_1$ and comparing the difference, we find the compatibility of $\p_H$ with the Gerstenhaber bracket \cite{G63},
\begin{align}\label{Ger3}
 \p_H\{\phi,\phi_1\} = (-1)^{|\phi_1|-1}\{\p_H \phi, \phi_1\} + \{\phi, \p_H \phi_1\}.
\end{align}
\item $n=2$. We find
\begin{align}\label{Ger4}
  &\p_H(\phi\{\phi_1,\phi_2\}) - (-1)^{|\phi_2|-1}\phi\{\p_H\phi_1,\phi_2\} - \phi\{\phi_1,\p_H\phi_2\} - (-1)^{\abs{\phi_1} + \abs{\phi_2}} \p_H(\phi)\{\phi_1,\phi_2\}\nonumber \\
   = & (-1)^{(|\phi_1|-1)|\phi_2|}\left( \phi\{\phi_1\cup \phi_2\} - (-1)^{(|\phi|-1)\abs{\phi_2}}\phi\{\phi_1\}\cup \phi_2-\phi_1\cup \phi\{\phi_2\} \right).
\end{align}
Set $\phi=m_2$ and use $m_2\{m_2\}=0$, we find
\begin{align}\label{Ger1}
\p_H(\phi_1\cup \phi_2)=\p_H(\phi_1)\cup \phi_2+(-1)^{|\phi_1|}\phi_1\cup \p_H(\phi_2).
\end{align}
This says that the triple
$$
  (C^\bullet(A,A), \p_H, \cup)
$$
defines a differential graded algebra (dga).
\end{itemize}

\begin{defn}\label{def:Gerstenhaber algebra}
A \emph{Gerstenhaber algebra} $(A,\cdot,\{~~,~~\})$ is a $\mathbb{Z}$ (or $\mathbb{Z}/2\Z$)-graded commutative algebra with a graded Lie structure on $A[1]$ satisfying the shifted Poisson identities. Thus, for $a,b,c\in A$, we have
\begin{itemize}
  \item $|a\cdot b| = |a|+|b|$ and $a\cdot b = (-1)^{|a||b|}b\cdot a$.
  \item $|\{a,b\}|  = |a|+|b|-1$, $\{a,b\} = -(-1)^{(|a|-1)(|b|-1)}\{b,a\}$ and $\{a,\{b,c\}\}=\{\{a,b\},c\}+(-1)^{(|a|-1)(|b|-1)}\{b,\{a,c\}\}$.
  \item $\{a\cdot b, c\} = \{a,b\}\cdot c+(-1)^{\abs{a}(|c|-1)}a\cdot\{b,c\}$.
\end{itemize}
\end{defn}

The direct consequence of \eqref{eq: Gerstenhaber bracket} \eqref{Ger2} \eqref{Ger3}\eqref{Ger4}\eqref{Ger1}  says that  the Hochschild cohomology
$$
 \mathsf{HH}^\bullet(A)\coloneqq\mathsf{H}^\bullet(C^\bullet(A,A),  \p_H)
$$
together with the cup product and Gerstenhaber bracket form a Gerstenhaber algebra.

\subsection{Curved algebras and mixed complex}
We consider a curved algebra $(A,W)$ where $W$ is a central element of $A$.  The curving $W$ defines
\begin{align*}
   m_0\in C^0(A,A), \quad m_0(1)=W.
\end{align*}
\begin{defn}\label{def: differentials} We define the \emph{curving differential} $\df_W$ on Hochschild cochains of $A$ by
\begin{align}
  \df_W = (-1)^{p}\{m_0,-\}: C^{p}(A,A)\to C^{p-1}(A,A) 
\end{align}
and the \emph{curved Hochschild differential} by
\begin{align}
  \p_H^W = \p_H+ \df_W.
\end{align}
\end{defn}
The following identities hold
$$
  \p_H^2=\df_W^2=(\p_H^W)^2=0.
$$
Therefore the triple $\{C^\bullet(A,A), \p_H, \df_W\}$ defines a mixed complex. We are interested in the cohomology for the mixed differential $\p_H^W$, which is sensitive to the topology we use. Following  \cite{CW10,CT13,S13}, the appropriate complex for Landau-Ginzburg models turns out to be the one of compact type above as induced in \cite{HMS74,PP10,P11}.

\begin{defn}We define the \emph{compact type Hochschild cohomology} for a curved algebra $(A,W)$ by
\begin{align}
   \mathsf{HH}_c(A_W)= \mathsf{H} \left(C^\bullet_c(A,A), \p_H^W \right).
\end{align}
$\mathsf{HH}_c(A_W)$ is $\Z/2\Z$-graded in terms of the parity of the degree
$$
\mathsf{HH}_c(A_W)=\mathsf{HH}^{\bar 0}_c(A_W)\oplus \mathsf{HH}^{\bar 1}_c(A_W).
$$
\end{defn}

\begin{prop} $\mathsf{HH}_c(A_W)$ is a $\Z/2\Z$-graded Gerstenhaber algebra.
\end{prop}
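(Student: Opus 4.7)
The plan is to lift the Gerstenhaber algebra structure on $\mathsf{HH}^\bullet(A)$ to the curved setting by showing that the chain-level identities which give this structure on $\p_H$-cohomology continue to hold, suitably modified, for $\p_H^W$-cohomology. First I would check that the operations $\cup$, $\{-,-\}$ and $\df_W$ all preserve the compact-type subspace $C^\bullet_c(A,A)\subset C^\bullet(A,A)$: the cup product adds degrees, the bracket adds degrees minus one, and $\df_W$ lowers degree, so finitely many components remain finitely many. Since $\df_W$ has degree $-1$ while $\p_H$ has degree $+1$, the cohomology is only $\Z/2\Z$-graded.

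The core observation is that $\df_W=(-1)^\bullet\{m_0,-\}$ is, at the chain level, a strict graded derivation both of $\{-,-\}$ and of $\cup$. For the bracket this is just the graded Jacobi identity for the Gerstenhaber bracket (inner derivation by $m_0$). For the cup product, one computes directly: inserting the curvature $W$ into one of the slots of $\phi\cup\psi$ decomposes into an insertion into a slot of $\phi$ or a slot of $\psi$, giving
\[
 \df_W(\phi\cup\psi)=\df_W(\phi)\cup\psi+(-1)^{|\phi|}\phi\cup\df_W(\psi).
\]
Combined with \eqref{Ger1} and \eqref{Ger3}, this shows that $\p_H^W=\p_H+\df_W$ is a strict $\Z/2\Z$-graded derivation of both $\cup$ and $\{-,-\}$. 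Hence both operations descend to well-defined operations on $\mathsf{HH}_c(A_W)$.

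It remains to verify graded commutativity of $\cup$, the Jacobi identity for $\{-,-\}$, and the Poisson compatibility on $\mathsf{HH}_c(A_W)$. At the chain level these hold up to $\p_H$-homotopy through \eqref{Ger2}, \eqref{Ger3}, \eqref{Ger4} (and higher instances of \eqref{untwisted higher homotopy}). To upgrade these to $\p_H^W$-homotopies one uses the higher pre-Jacobi identity together with the crucial arity-zero input $m_0\{-\}=0$: this forces
\[
 \{m_0,\phi\{\phi_1,\ldots,\phi_n\}\}=\sum_{k}(-1)^{\epsilon_k}\phi\{\phi_1,\ldots,\{m_0,\phi_k\},\ldots,\phi_n\}+\{m_0,\phi\}\{\phi_1,\ldots,\phi_n\},
\]
so that $\df_W$ acts as a derivation on all brace operations. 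Substituting this into \eqref{Ger2}-\eqref{Ger4}, the additional $\df_W$-terms telescope and one obtains the same identities with $\p_H$ replaced by $\p_H^W$. Passing to cohomology then yields graded commutativity of $\cup$, Jacobi for $\{-,-\}$, and Poisson compatibility.

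The main technical obstacle is the third step: bookkeeping the $\Z/2\Z$-signs carefully and checking that every $\df_W$-contribution in the homotopies \eqref{Ger2}-\eqref{Ger4} is absorbed by the analogous contribution coming from the derivation property of $\df_W$ on braces. Once this cancellation is verified, the Gerstenhaber algebra structure on $\mathsf{HH}_c(A_W)$ follows from exactly the same formal argument as in the uncurved case. (This also foreshadows the $G$-twisted generalization in Theorem \ref{thm: dga on G-twisted Hochschild}, where the same strategy will be applied after constructing twisted brace operations.)
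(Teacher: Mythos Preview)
Your proposal is correct and follows essentially the same approach as the paper. The paper's proof is a one-liner---``Equations \eqref{Ger2} \eqref{Ger3} \eqref{Ger4} \eqref{Ger1} still hold if $\p_H$ is replaced by $\p_H^W$''---while you have spelled out the mechanism behind this replacement (namely that $m_0\{-\}=0$ forces $\df_W$ to act as a derivation on all brace operations, so the higher homotopy identities \eqref{untwisted higher homotopy} persist with $\p_H^W$ in place of $\p_H$), but the underlying argument is the same.
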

\begin{proof}
Equations  \eqref{Ger2} \eqref{Ger3}\eqref{Ger4}\eqref{Ger1} still hold if $\p_H$ is replaced by $\p_H^W$. It follows that the cup product $\cup$ and the Gerstenhaber bracket $\{-,-\}$ induce the Gerstenhaber algebra structure on $\mathsf{HH}_c(A_W)$.
\end{proof}

\boldmath
\subsection{$G$-curved algebras and $G$-twisted brace structures}
\unboldmath

\begin{defn}\label{G-twisted Hochschild cochains}
Let $(A,W,G)$ be a $G$-twisted curved algebra. We define the \emph{$G$-twisted Hochschild cochains and compact type Hochschild cochains} by
\begin{align}
C^\bullet(A,A[G]) =\mathop{\prod}\limits_{p=0}^\infty C^p(A,A[G]), \quad
C^\bullet_c(A,A[G]) = \mathop{\bigoplus}\limits_{p=0}^\infty C^p(A,A[G]),
\end{align}
where
\begin{align*}
 C^p(A,A[G])=\hom_k(A^{\otimes p}, A[G]).
\end{align*}
\end{defn}
There is a natural $G$-action on $G$-twisted Hochschild cochains
\begin{align*}
G\times C^p(A,A[G]) & \rightarrow C^p(A,A[G]),\\
(g,\phi) & \mapsto g^*(\phi),
\end{align*}
given by
\begin{equation}\label{eq: group action on twisted cochains}
g^*(\phi)(a_1\otimes a_2\otimes \cdots \otimes a_p) =g \cdot \phi(\leftidx{^{g^{-1}}}a_1\otimes \leftidx{^{g^{-1}}}a_2 \otimes \cdots \otimes \leftidx{^{g^{-1}}} a_p) \cdot g^{-1}.
\end{equation}

\boldmath
\subsubsection*{$G$-twisted braces}
\unboldmath

\begin{defn}[$G$-twisted braces]\label{def: twisted brace structure}
Given $\phi\in C^p(A,Ag)$, $\phi_i\in C^{p_i}(A,Ag_i)$ for $i=1,\cdots,k$ and $a_1,a_2\cdots \in A$, we define (if $k\leq p$)
\begin{align}\label{eq: twisted brace on Hoch cochains}
&\phi\{\phi_1,\phi_2, \cdots \phi_k\}(a_1 \otimes a_2 \otimes \cdots )\nonumber\\
\coloneqq & \sum\limits_{\bm{I}\in\mathcal{I}} (-1)^{\sum\limits_{j=1}^k(i_j-1)(\abs{\phi_j}-1)} \phi^\circ\Big(a_1 \otimes \cdots \otimes a_{i_1-1} \otimes \phi^\circ_1(a_{i_1}\otimes \cdots \otimes a_{i_1+\abs{\phi_1}-1}) \otimes \leftidx{^{g_1}}a_{i_1+\abs{\phi_1}}\otimes\cdots\nonumber\\
 &\qquad\qquad\qquad\qquad\quad~~ \cdots\otimes \leftidx{^{g_{j-1}\cdots g_1}}a_{i_j-1}\otimes \phi^\circ_j(\leftidx{^{g_{j-1}\cdots g_1}}a_{i_j}\otimes \cdots)\otimes \leftidx{^{g_j\cdots g_1}}a_{i_j+\abs{\phi_j}}\otimes \cdots\nonumber\\
 &\qquad\qquad\qquad\qquad\quad~~ \cdots\otimes \phi^\circ_k(\leftidx{^{g_{k-1}\cdots g_1}}a_{i_k}\otimes \cdots)\otimes \leftidx{^{g_k\cdots g_1}}a_{i_k+\abs{\phi_j}}\otimes \cdots\Big)gg_k\cdots g_1,
\end{align}
where
\begin{equation*}
\mathcal{I} \coloneqq \lrf{\bm{I} = ( i_1 < i_2 < \cdots < i_k)\mid 0 < i_1 \text{ and } \forall 1 \leqslant j < k, i_{j} + \abs{\phi_j} \leqslant i_{j+1} }.
\end{equation*}
Here $\phi^\circ \in C^\bullet(A,A)$ such that $\phi = \phi^\circ g\in C^\bullet(A,Ag)$. Similarly for $\phi_i^\circ$. We will set  $\phi\{\phi_1,\phi_2, \cdots \phi_k\}=0$, if $k>p$. This extends linearly to brace structures on $G$-twisted Hochschild cochains
$$
   C^\bullet(A,A[G])\to C^\bullet\left(C^\bullet(A,A[G]),C^\bullet(A,A[G])\right), \quad \phi \mapsto \phi\{\cdots\},
$$
which we call $G$-twisted braces.
\end{defn}

\begin{rem} We may use tree graphs to express terms above. For example, we write
\[(-1)^{(i_1-1)(|\phi_1|-1)+(i_2-1)(|\phi_2|-1)}\tikz[scale=0.06,baseline=-5]{
\draw[thick] (10,0) node[right] {$gg_2g_1$} -- (0,0) -- ++ (-10,16);
\draw[thick] (0,0) -- ++ (-10,8);
\draw[thick] (0,0) -- ++ (-10,0)node[left,font=\tiny,inner sep = 1pt] {$g_{\sm{1}}$};
\draw[thick] (0,0) -- ++ (-10,-8);
\draw[thick] (0,0) -- ++ (-10,-16) node[left,font=\tiny,inner sep = 1pt] {$g_{\sm{2}}g_{\sm{1}}$};
\draw[thick] (-10,8) -- ++ (-10,-5);
\draw[thick] (-10,8) -- ++ (-10,0);
\draw[thick] (-10,8) -- ++ (-10,5) node[font=\tiny,below,outer sep = 0pt,inner sep = 1pt ] {$i_{\sm{1}}$};
\draw[thick] (-10,-8) -- ++ (-10,-5) node[left,font=\tiny,inner sep = 1pt] {$g_{\sm{1}}$};
\draw[thick] (-10,-8) -- ++ (-10,0) node[left,font=\tiny,inner sep = 1pt] {$g_{\sm{1}}$};
\draw[thick] (-10,-8) -- ++ (-10,5) node[font=\tiny,below,inner sep = 1pt] {$i_{\sm{2}}$};
\node[left,font=\tiny,inner sep = 1pt] at (-20,-3) {$g_{\sm{1}}$};
\filldraw[fill=black] (-10,16) circle (10pt);
\filldraw[fill=black] (-10,-16) circle (10pt);
\filldraw[fill=black] (-10,0) circle (10pt);
\filldraw[fill=black] (-20,-13) circle (10pt);
\filldraw[fill=black] (-20,-3) circle (10pt);
\filldraw[fill=black] (-20,-8) circle (10pt);
\filldraw[fill=black] (-20,13) circle (10pt);
\filldraw[fill=black] (-20,3) circle (10pt);
\filldraw[fill=black] (-20,8) circle (10pt);
\node[font=\tiny,rectangle,draw=black,fill=white,inner sep=0.5] at (0,0) {$\phi$};
\node[font=\tiny,rectangle,draw=black,fill=white,inner sep=0.5] at (-10,8) {$\phi_1$};
\node[font=\tiny,rectangle,draw=black,fill=white,inner sep=0.5] at (-10,-8) {$\phi_2$};
\filldraw[fill=white,draw=black] (10,0) circle (10pt);
},\]
for
\begin{align*}
&(-1)^{(i_1-1)(|\phi_1|-1)+(i_2-1)(|\phi_2|-1)} \phi^\circ \big(a_1\otimes \cdots \otimes a_{i_1-1}\otimes\phi_1^\circ(a_{i_1}\otimes \cdots)\otimes\leftidx{^{g_1}}a_{i_1+\abs{\phi_1}}\otimes \cdots\nonumber\\
&\qquad\qquad\qquad \cdots \otimes\leftidx^{g_1}a_{i_2-1}\otimes \phi_2^\circ (\leftidx{^{g_1}}a_{i_2}\otimes \cdots )\otimes  \leftidx{^{g_2g_1}}a_{i_2+\abs{\phi_2}}\otimes \cdots\big)gg_2g_1,
\end{align*}
as a term in $\phi\lrf{\phi_1,\phi_2}(a_1,a_2,\cdots )$.
As another example, the following graph
\[(-1)^{(i_1-1)(|\phi_1|-1)+(i_2-1)(|\phi_2|-1)}\tikz[scale=0.06,baseline=-5]{
\draw[thick] (10,0) node[right] {$gg_1g_2$} -- (0,0) -- ++ (-10,16);
\draw[thick] (0,0) -- ++ (-10,8);
\draw[thick] (0,0) -- ++ (-10,0)node[left,font=\tiny,inner sep = 1pt] {$g_{\sm{1}}$};
\draw[thick] (0,0) -- ++ (-10,-8) -- ++ (-10,0);
\draw[thick] (0,0) -- ++ (-10,-16) node[left,font=\tiny,inner sep = 1pt] {$g_{\sm{1}}g_{\sm{2}}$};
\draw[thick] (-10,8) -- ++ (-10,-5);
\draw[thick] (-10,8) -- ++ (-10,0);
\draw[thick] (-10,8) -- ++ (-10,5) node[font=\tiny,below,outer sep = 0pt,inner sep = 1pt ] {$i_{\sm{1}}$};
\draw[thick] (-20,-8) -- ++ (-10,-5);
\draw[thick] (-20,-8) -- ++ (-10,0);
\draw[thick] (-20,-8) -- ++ (-10,5) node[font=\tiny,below,inner sep = 1pt] {$i_{\sm{2}}$};
\filldraw[fill=black] (-10,16) circle (10pt);
\filldraw[fill=black] (-10,-16) circle (10pt);
\filldraw[fill=black] (-10,0) circle (10pt);
\filldraw[fill=black] (-10,-8) circle (10pt);
\filldraw[fill=black] (-30,-13) circle (10pt);
\filldraw[fill=black] (-30,-3) circle (10pt);
\filldraw[fill=black] (-30,-8) circle (10pt);
\filldraw[fill=black] (-20,13) circle (10pt);
\filldraw[fill=black] (-20,3) circle (10pt);
\filldraw[fill=black] (-20,8) circle (10pt);
\node[font=\tiny,rectangle,draw=black,fill=white,inner sep=0.5] at (0,0) {$\phi$};
\node[font=\tiny,rectangle,draw=black,fill=white,inner sep=0.5] at (-10,8) {$\phi_1$};
\node[font=\tiny,rectangle,draw=black,fill=white,inner sep=0.5] at (-20,-8) {$\leftidx{^{g_{\sm{1}}}}\phi_2$};
\filldraw[fill=white,draw=black] (10,0) circle (10pt);
},\]
for
\begin{align*}
&(-1)^{(i_1-1)(|\phi_1|-1)+(i_2-1)(|\phi_2|-1)} \phi^\circ \big(a_1\otimes \cdots \otimes a_{i_1-1}\otimes\phi_1^\circ(a_{i_1}\otimes \cdots)\otimes\leftidx{^{g_1}}a_{i_1+\abs{\phi_1}}\otimes \cdots\nonumber\\
&\qquad\qquad\qquad \cdots \otimes\leftidx^{g_1}a_{i_2-1}\otimes \leftidx{^{g_1}}\phi_2^\circ (a_{i_2}\otimes \cdots )\otimes  \leftidx{^{g_1g_2}}a_{i_2+\abs{\phi_2}}\otimes \cdots\big)gg_1g_2,
\end{align*}
appears as a term in $\phi\lrf{\phi_1}\lrf{\phi_2}(a_1,a_2,\cdots )$.
\end{rem}

\begin{rem}This brace structure is a generalization of twisted versions of Gerstenhaber products in \cite{HT10,SW12}.
\end{rem}

\begin{prop}[$G$-twisted higher pre-Jacobi identities]\label{thm:twisted high pre-j identity}
Given $\phi\in C^\bullet(A,Ag)$, $\phi_i\in  C^\bullet(A,Ag_i)$ with $ 1\leqslant i\leqslant n$ and $\psi_j\in C^\bullet(A,Ah_j)$ with $1\leqslant j\leqslant m$, we have the following $G$-twisted version of higher pre-Jacobi identities
\begin{align}\label{eq:twisted high pre-j identity}
&\phi \lrf{\phi_1,\phi_2 \cdots \phi_n}\lrf{\psi_1,\psi_2 \cdots \psi_m}\nonumber\\
=&\sum_{\bm{j}\in\mathcal{J}}  (-1)^{\sum\limits_{i=1}^n\xi_i(\abs{\phi_i}-1)} \phi\Big\{\psi_1,\cdots \psi_{j_1-1},\phi_1\lrf{\psi_{j_1}, \cdots \psi_{j'_1-1}}, g_1^*(\psi_{j'_1}),\cdots\nonumber\\
&\cdots (g_{i-1} \cdots  g_1)^*(\psi_{j_i-1}),\phi_i\lrf{(g_{i-1}\cdots g_1)^*(\psi_{j_i}), \cdots (g_{i-1}\cdots g_1)^*(\psi_{j'_i-1})}, \cdots\nonumber\\
&\cdots \phi_n\lrf{(g_{n-1}\cdots g_1)^*(\psi_{j_n}), \cdots (g_{n-1}\cdots g_1)^*(\psi_{j_n-1})}, \cdots(g_{n}\cdots g_1)^*(\psi_m)\Big\},
\end{align}
where
\begin{equation*}
\mathcal{J} \coloneqq \lrf{\bm{j} = ( j_1 \leqslant j'_1 < \cdots < j_n\leqslant j'_n)\mid 1 \leqslant j_1, j'_n\leqslant m \text{ and } \forall 1 \leqslant i < n, j'_{i} \leqslant \abs{\phi_i} + j_{i} },
\end{equation*}
and
\begin{equation*}
\xi_i \coloneqq \abs{\psi_1}+\abs{\psi_2}+\cdots \abs{\psi_{j_i-1}}-j_i+1.
\end{equation*}
\end{prop}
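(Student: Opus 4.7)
The plan is to prove \eqref{eq:twisted high pre-j identity} by directly expanding both sides using the $G$-twisted brace definition \eqref{eq: twisted brace on Hoch cochains} and establishing a term-by-term bijection. The combinatorial skeleton of the argument parallels the untwisted higher pre-Jacobi identity \eqref{eq: Higher prejacobi}; the additional content is careful bookkeeping of the $G$-action on argument slots and of the order in which group elements accumulate in the output.

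First I would unwind the LHS. The inner brace $\phi\{\phi_1,\ldots,\phi_n\}$ is a sum over tuples $\bm{I}=(i_1<\cdots<i_n)\in\mathcal{I}$; each term inserts $\phi_j^\circ$ into the $i_j$-th argument slot of $\phi^\circ$ and twists subsequent slots by $g_j\cdots g_1$, producing a cochain with trailing group element $gg_n\cdots g_1$. Applying a second brace with $\{\psi_1,\ldots,\psi_m\}$ then sums, for each tuple $\bm{K}=(k_1<\cdots<k_m)$, insertions of $\psi_l^\circ$ into the $k_l$-th surviving argument slot. Each position $k_l$ either falls inside some $\phi_i$-block (one of the $|\phi_i|$ argument slots of $\phi_i^\circ$) or in a gap between consecutive $\phi_i$-blocks, including the gaps before $\phi_1$ and after $\phi_n$. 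Partitioning the $\psi_l$'s accordingly yields precisely the indexing data $\bm{j}=(j_1\leq j'_1<\cdots<j_n\leq j'_n)\in\mathcal{J}$: those inside $\phi_i$ are $\psi_{j_i},\ldots,\psi_{j'_i-1}$ (with the constraint $j'_i-j_i\leq|\phi_i|$ built in), while those in the gap between $\phi_i$ and $\phi_{i+1}$ are $\psi_{j'_i},\ldots,\psi_{j_{i+1}-1}$. These two kinds of insertions correspond respectively to arguments of the inner brace $\phi_i\lrf{\cdots}$ and to further arguments of the outer brace $\phi\lrf{\cdots}$ on the RHS.

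Next I would translate the $G$-twists. A $\psi_l$ that lands inside $\phi_i$'s block has its output subsequently conjugated by $g_{i-1}\cdots g_1$ (since it occupies one of $\phi_i$'s input slots, which lies in the $g_{i-1}\cdots g_1$-twisted region) and its own arguments become $\leftidx{^{g_{i-1}\cdots g_1}}a$'s; by \eqref{eq: group action on twisted cochains}, this is precisely $(g_{i-1}\cdots g_1)^*(\psi_l)$ acting on untwisted $a$'s, with the output conjugation absorbed into the overall trailing group element. The same translation shows $\psi_l$'s in the gap after $\phi_i$ become $(g_i\cdots g_1)^*(\psi_l)$, matching the twists on the RHS. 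A short telescoping calculation using $(g_i\cdots g_1)(g_{i-1}\cdots g_1)^{-1}=g_i$ then verifies that the trailing group element on both sides equals $gg_n\cdots g_1 h_m\cdots h_1$.

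The main technical obstacle is sign tracking. The LHS carries the product of the two brace signs, namely $(-1)^{\sum_j(i_j-1)(|\phi_j|-1)+\sum_l(k_l-1)(|\psi_l|-1)}$, whereas the RHS contains the outer-brace sign from $\phi\lrf{\cdots}$, an inner-brace sign from each $\phi_i\lrf{\psi_{j_i},\ldots}$, and the global factor $(-1)^{\sum_i\xi_i(|\phi_i|-1)}$. Matching these requires rewriting each position $i_j$ and $k_l$ in terms of the $\psi$-indices $j_i,j'_i$ used on the RHS; the factor $\xi_i(|\phi_i|-1)$ then emerges from commuting the shifted-degree-$(|\phi_i|-1)$ symbol $\phi_i$ past the preceding $\psi_1,\ldots,\psi_{j_i-1}$ of total shifted degree $\xi_i$ within the outer brace. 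Because the $G$-action preserves the $\Z$-grading, group twists do not contribute to signs, so the identity reduces formally to its untwisted counterpart \eqref{eq: Higher prejacobi}; nevertheless, verifying this reduction in the present notation, with all the subscripts, primed subscripts, and twisted inputs written out explicitly, is the most delicate and tedious step of the proof.
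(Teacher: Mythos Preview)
Your approach is correct and is essentially the same as the paper's: both verify \eqref{eq:twisted high pre-j identity} by direct term-by-term comparison, matching each summand on the LHS (a choice of where every $\psi_l$ lands relative to the $\phi_i$-blocks) with the corresponding summand on the RHS, and checking that the accumulated $G$-twists agree via the definition \eqref{eq: group action on twisted cochains} of $g^*$. The paper presents this verification pictorially using the tree-graph notation introduced just before the proposition, illustrated only in the case $n=m=2$, whereas you spell out the bijection and the twist/sign bookkeeping in prose for general $n,m$; in that sense your write-up is more complete than the paper's, but the underlying argument is identical.
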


\begin{proof} We illustrate by an example when $n=m=2$. The left hand side of (\ref{eq:twisted high pre-j identity}) consists of terms like
\[\tikz[scale=0.10,baseline=-5]{
\draw[thick] (10,0) node[right] {$gg_2g_1h_2h_1$} -- (0,0) -- ++ (-10,18);
\draw[thick] (0,0) -- ++ (-10,6) node[left,font=\tiny,inner sep = 1pt] {$g_{\sm{1}}$};
\draw[thick] (0,0) -- ++ (-10,12);
\draw[thick] (0,0) -- ++ (-10,-6)node[left,font=\tiny,inner sep = 1pt] {$g_{\sm{1}}h_{\sm{1}}$};
\draw[thick] (0,0) -- ++ (-10,0) -- ++ (-10,0);
\draw[thick] (0,0) -- ++ (-10,-18) node[left,font=\tiny,inner sep = 1pt] {$g_{\sm{2}}g_{\sm{1}}h_{\sm{2}}h_{\sm{1}}$};
\draw[thick] (0,0) -- ++ (-10,-12);
\draw[thick] (-10,12) -- ++ (-10,-6);
\draw[thick] (-10,12) -- ++ (-10,0);
\draw[thick] (-10,12) -- ++ (-10,6) node[font=\tiny,below,outer sep = 0pt,inner sep = 2pt ] {$\ell_{\sm{1}}$};
\draw[thick] (-10,-12) -- ++ (-10,-6) node[left,font=\tiny,inner sep = 1pt] {$g_{\sm{1}}h_{\sm{2}}h_{\sm{1}}$};
\draw[thick] (-10,-12) -- ++ (-10,0) -- ++ (-10,0) node[left,font=\tiny,inner sep = 1pt] {$h_{\sm{1}}$};
\draw[thick] (-10,-12) -- ++ (-10,6) node[font=\tiny,below,inner sep = 2pt] {$\ell_{\sm{2}}$};
\draw[thick] (-20,-12) -- ++ (-10,4) node[font=\tiny,below,inner sep = 2pt] {$l_{\sm{2}}$};
\draw[thick] (-20,-12) -- ++ (-10,-4) node[left,font=\tiny,inner sep = 1pt] {$h_{\sm{1}}$};
\draw[thick] (-20,0) -- ++ (-10,-4);
\draw[thick] (-20,0) -- ++ (-10,0);
\draw[thick] (-20,0) -- ++ (-10,4) node[font=\tiny,below,,inner sep = 2pt] {$l_{\sm{1}}$};
\node[left,font=\tiny,inner sep = 1pt] at (-20,-6) {$g_{\sm{1}}h_{\sm{1}}$};
\node[left,font=\tiny,inner sep = 1pt] at (-30,-8) {$h_{\sm{1}}$};
\filldraw[fill=black] (-10,18) circle (10pt);
\filldraw[fill=black] (-10,6) circle (10pt);
\filldraw[fill=black] (-10,0) circle (10pt);
\filldraw[fill=black] (-10,-6) circle (10pt);
\filldraw[fill=black] (-10,-18) circle (10pt);
\filldraw[fill=black] (-20,18) circle (10pt);
\filldraw[fill=black] (-20,12) circle (10pt);
\filldraw[fill=black] (-20,6) circle (10pt);
\filldraw[fill=black] (-20,-18) circle (10pt);
\filldraw[fill=black] (-20,-6) circle (10pt);
\filldraw[fill=black] (-30,4) circle (10pt);
\filldraw[fill=black] (-30,0) circle (10pt);
\filldraw[fill=black] (-30,-4) circle (10pt);
\filldraw[fill=black] (-30,-8) circle (10pt);
\filldraw[fill=black] (-30,-12) circle (10pt);
\filldraw[fill=black] (-30,-16) circle (10pt);
\node[font=\tiny,rectangle,draw=black,fill=white,inner sep=0.5] at (0,0) {$\phi$};
\node[font=\tiny,rectangle,draw=black,fill=white,inner sep=0.5] at (-10,12) {$\phi_1$};
\node[font=\tiny,rectangle,draw=black,fill=white,inner sep=0.5] at (-10,-12) {$\phi_2$};
\node[font=\tiny,rectangle,draw=black,fill=white,inner sep=0.5] at (-20,0) {$\leftidx{^{g_{\sm{1}}}}\psi_1$};
\node[font=\tiny,rectangle,draw=black,fill=white,inner sep=0.5] at (-20,-12) {$\leftidx{^{g_{\sm{1}}}}\psi_2$};
\filldraw[fill=white,draw=black] (10,0) circle (10pt);
}.\]
The right half side of (\ref{eq:twisted high pre-j identity}) consists of terms like
\[\tikz[scale=0.10,baseline=-5]{
\draw[thick] (10,0) node[right] {$gg_2g_1h_2h_1$} -- (0,0) -- ++ (-10,18);
\draw[thick] (0,0) -- ++ (-10,6) node[left,font=\tiny,inner sep = 1pt] {$g_{\sm{1}}$};
\draw[thick] (0,0) -- ++ (-10,12);
\draw[thick] (0,0) -- ++ (-10,-6)node[left,font=\tiny,inner sep = 1pt] {$g_{\sm{1}}h_{\sm{1}}g_{\sm{1}}^{\sm{-\!1}}g_{\sm{1}}$};
\draw[thick] (0,0) -- ++ (-10,0);
\draw[thick] (0,0) -- ++ (-10,-18) node[anchor = north east,font=\tiny,inner sep = 0pt,outer sep = 0pt] {$g_{\sm{2}}g_{\sm{1}}h_{\sm{2}}g_{\sm{1}}^{\sm{-\!1}}g_{\sm{1}}h_{\sm{1}}$};
\draw[thick] (0,0) -- ++ (-10,-12);
\draw[thick] (-10,12) -- ++ (-10,-6);
\draw[thick] (-10,12) -- ++ (-10,0);
\draw[thick] (-10,12) -- ++ (-10,6) node[font=\tiny,below,outer sep = 0pt,inner sep = 2pt ] {$\ell_{\sm{1}}$};
\draw[thick] (-10,-12) -- ++ (-10,-5) node[left,font=\tiny,inner sep = 1pt] {$g_{\sm{1}}h_{\sm{2}}g_{\sm{1}}^{\sm{-\!1}}g_{\sm{1}}h_{\sm{1}}$};
\draw[thick] (-10,-12) -- ++ (-10,0) -- ++ (-10,0) node[left,font=\tiny,inner sep = 1pt] {$g_{\sm{1}}h_{\sm{1}}g_{\sm{1}}^{\sm{-\!1}}g_{\sm{1}}$};
\draw[thick] (-10,-12) -- ++ (-10,5) node[font=\tiny,below,inner sep = 2pt] {$\ell_{\sm{2}}$};
\draw[thick] (-20,-12) -- ++ (-10,4) node[font=\tiny,below,inner sep = 2pt] {$l_{\sm{2}}$};
\draw[thick] (-20,-12) -- ++ (-10,-4) node[left,font=\tiny,inner sep = 1pt] {$g_{\sm{1}}h_{\sm{1}}g_{\sm{1}}^{\sm{-\!1}}g_{\sm{1}}$};
\draw[thick] (-10,0) -- ++ (-10,-4) node[left,font=\tiny,inner sep = 1pt] {$g_{\sm{1}}$};
\draw[thick] (-10,0) -- ++ (-10,0) node[left,font=\tiny,inner sep = 1pt] {$g_{\sm{1}}$};
\draw[thick] (-10,0) -- ++ (-10,4) node[font=\tiny,below,,inner sep = 2pt] {$l_{\sm{1}}$};
\node[left,font=\tiny,inner sep = 1pt] at (-20,-7) {$g_{\sm{1}}h_{\sm{1}}g_{\sm{1}}^{\sm{-\!1}}g_{\sm{1}}$};
\node[left,font=\tiny,inner sep = 1pt] at (-30,-8) {$g_{\sm{1}}h_{\sm{1}}g_{\sm{1}}^{\sm{-\!1}}g_{\sm{1}}$};
\node[left,font=\tiny,inner sep = 1pt] at (-20,4) {$g_{\sm{1}}$};
\filldraw[fill=black] (-10,18) circle (10pt);
\filldraw[fill=black] (-10,6) circle (10pt);
\filldraw[fill=black] (-10,0) circle (10pt);
\filldraw[fill=black] (-10,-6) circle (10pt);
\filldraw[fill=black] (-10,-18) circle (10pt);
\filldraw[fill=black] (-20,18) circle (10pt);
\filldraw[fill=black] (-20,12) circle (10pt);
\filldraw[fill=black] (-20,6) circle (10pt);
\filldraw[fill=black] (-20,-17) circle (10pt);
\filldraw[fill=black] (-20,-7) circle (10pt);
\filldraw[fill=black] (-20,4) circle (10pt);
\filldraw[fill=black] (-20,0) circle (10pt);
\filldraw[fill=black] (-20,-4) circle (10pt);
\filldraw[fill=black] (-30,-8) circle (10pt);
\filldraw[fill=black] (-30,-12) circle (10pt);
\filldraw[fill=black] (-30,-16) circle (10pt);
\node[font=\tiny,rectangle,draw=black,fill=white,inner sep=0.5] at (0,0) {$\phi$};
\node[font=\tiny,rectangle,draw=black,fill=white,inner sep=0.5] at (-10,12) {$\phi_1$};
\node[font=\tiny,rectangle,draw=black,fill=white,inner sep=0.5] at (-10,-12) {$\phi_2$};
\node[font=\tiny,rectangle,draw=black,fill=white,inner sep=0.5] at (-10,0) {$g_{\sm{1}}^*(\psi_1)$};
\node[font=\tiny,rectangle,draw=black,fill=white,inner sep=0.5] at (-20,-12) {$g_{\sm{1}}^*(\psi_2)$};
\filldraw[fill=white,draw=black] (10,0) circle (10pt);
}.\]
By (\ref{eq: group action on twisted cochains}), we see (\ref{eq:twisted high pre-j identity}) hold.
\end{proof}

\begin{lem}\label{lem: twisted brace structures on twisted cochain is G-equivariant}
The twisted brace structures are $G$-equivariant with respect to the $G$-action defined in (\ref{eq: group action on twisted cochains}), i.e., $\forall h\in G$ and $\phi,\phi_1,\phi_2,\cdots \in C^\bullet(A,A[G])$, we have
\begin{equation}\label{eq: twisted brace structures on twisted cochain is G-equivariant}
h^*\lr{\phi\lrf{\phi_1,\phi_2,\cdots}} = h^*(\phi)\lrf{h^*(\phi_1),h^*(\phi_2),\cdots}.
\end{equation}
\end{lem}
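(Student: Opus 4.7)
By $k$-linearity it suffices to verify \eqref{eq: twisted brace structures on twisted cochain is G-equivariant} on ``pure'' elements $\phi=\phi^{\circ}g$ and $\phi_i=\phi_i^{\circ}g_i$ with $\phi^{\circ}\in C^{p}(A,A)$, $\phi_i^{\circ}\in C^{p_i}(A,A)$. Under the isomorphism $C^{\bullet}(A,Ag)\cong C^{\bullet}(A,A)\cdot g$, the $G$-action \eqref{eq: group action on twisted cochains} reads
\[
h^{*}(\phi^{\circ}g)=(\leftidx{^{h}}\phi^{\circ})\cdot(hgh^{-1}),\quad\text{where}\quad (\leftidx{^{h}}\phi^{\circ})(a_1\otimes\cdots\otimes a_p)=\leftidx{^{h}}\bigl(\phi^{\circ}(\leftidx{^{h^{-1}}}a_1\otimes\cdots\otimes\leftidx{^{h^{-1}}}a_p)\bigr).
\]
In particular $|h^{*}(\phi_i)|=|\phi_i|$, so the signs $(-1)^{\sum_{j}(i_j-1)(|\phi_j|-1)}$ appearing in \eqref{eq: twisted brace on Hoch cochains} are identical on the two sides of \eqref{eq: twisted brace structures on twisted cochain is G-equivariant} and can be ignored in the comparison.

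The plan is to unfold the left hand side of \eqref{eq: twisted brace structures on twisted cochain is G-equivariant} directly. Evaluated on $a_1\otimes a_2\otimes\cdots$, an individual summand of $\phi\{\phi_1,\dots,\phi_k\}$ indexed by $\bm{I}\in\mathcal{I}$ has the shape $\Psi_{\bm{I}}(\leftidx{^{h^{-1}}}a_1,\dots)\cdot gg_k\cdots g_1$, where $\Psi_{\bm{I}}$ is the nested expression $\phi^{\circ}(\cdots\phi_j^{\circ}(\leftidx{^{g_{j-1}\cdots g_1}}a_{i_j}\otimes\cdots)\otimes\leftidx{^{g_j\cdots g_1}}a_{i_j+|\phi_j|}\otimes\cdots)$. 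Applying $h^{*}$, which conjugates the $A[G]$-element by $h$, yields
\[
\leftidx{^{h}}\bigl(\Psi_{\bm{I}}(\leftidx{^{h^{-1}}}a_1,\dots)\bigr)\cdot h\,gg_k\cdots g_1\,h^{-1}.
\]

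The crucial step is to push the outer $\leftidx{^{h}}$ inwards. Since $A$ is a $G$-algebra, the map $a\mapsto\leftidx{^{h}}a$ is an algebra automorphism, in particular $k$-linear on every tensor factor, so by the very definition of $\leftidx{^{h}}\phi_j^{\circ}$ we have $\leftidx{^{h}}\bigl(\phi_j^{\circ}(c_1\otimes\cdots)\bigr)=(\leftidx{^{h}}\phi_j^{\circ})(\leftidx{^{h}}c_1\otimes\cdots)$. Iterating this through the nested expression, each argument of the form $\leftidx{^{g_{j-1}\cdots g_1}}\leftidx{^{h^{-1}}}a_{\ell}=\leftidx{^{g_{j-1}\cdots g_1 h^{-1}}}a_{\ell}$ gets transformed into $\leftidx{^{h g_{j-1}\cdots g_1 h^{-1}}}a_{\ell}=\leftidx{^{(hg_{j-1}h^{-1})\cdots(hg_1h^{-1})}}a_{\ell}$, which is precisely the prefix appearing when one substitutes the group elements of $h^{*}(\phi_i)=(\leftidx{^{h}}\phi_i^{\circ})(hg_ih^{-1})$ into \eqref{eq: twisted brace on Hoch cochains}. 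For the trailing group factor, one uses the trivial identity $h\,gg_k\cdots g_1\,h^{-1}=(hgh^{-1})(hg_kh^{-1})\cdots(hg_1h^{-1})$, which matches the corresponding product on the right hand side.

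The only nontrivial bookkeeping is the tracking of the $G$-shifts on the arguments at positions $i_j$ through $i_j+|\phi_j|$; this is the main obstacle, but it is purely a combinatorial matching of the two exponents $g_{j-1}\cdots g_1$ and $(hg_{j-1}h^{-1})\cdots(hg_1h^{-1})$ after conjugation by $h$, which we have just verified. Summing over $\bm{I}\in\mathcal{I}$ on both sides and observing that $\mathcal{I}$ is intrinsic to the arities $|\phi_i|$ (hence unchanged under $h^{*}$) completes the proof.
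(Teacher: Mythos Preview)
Your proof is correct and follows essentially the same approach as the paper: both unfold the definition of the twisted brace term by term, push the outer $h$-action inward using $\leftidx{^{h}}\bigl(\phi_j^{\circ}(c_1\otimes\cdots)\bigr)=(\leftidx{^{h}}\phi_j^{\circ})(\leftidx{^{h}}c_1\otimes\cdots)$, and identify the resulting group prefixes $h\,g_{j-1}\cdots g_1\,h^{-1}=(hg_{j-1}h^{-1})\cdots(hg_1h^{-1})$ with those coming from $h^{*}(\phi_i)$. The paper simply records this same computation pictorially via tree diagrams for the case $k=2$, whereas you spell it out in words for general $k$.
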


\begin{proof}We illustrate for the case $\phi\{\phi_1,\phi_2\}$. Terms in both sides are given by
\[\tikz[scale=0.1,baseline=-5]{
\draw[thick] (10,0) node[right] {$hgg_2g_1h^{-1}$} -- (0,0) -- ++ (-10,16) node[left,font=\tiny,inner sep = 1pt] {$h^{\sm{-1}}$};
\draw[thick] (0,0) -- ++ (-10,8);
\draw[thick] (0,0) -- ++ (-10,0) node[left,font=\tiny,inner sep = 1pt] {$g_{\sm{1}}h^{\sm{-1}}$};
\draw[thick] (0,0) -- ++ (-10,-8);
\draw[thick] (0,0) -- ++ (-10,-16) node[left,font=\tiny,inner sep = 1pt] {$g_{\sm{2}}g_{\sm{1}}h^{\sm{-1}}$};
\draw[thick] (-10,8) -- ++ (-10,-5) node[left,font=\tiny,inner sep = 1pt] {$h^{\sm{-1}}$};
\draw[thick] (-10,8) -- ++ (-10,0) node[left,font=\tiny,inner sep = 1pt] {$h^{\sm{-1}}$};
\draw[thick] (-10,8) -- ++ (-10,5) node[font=\tiny,below,outer sep = 0pt,inner sep = 2pt ] {$i_{\sm{1}}$};
\draw[thick] (-10,-8) -- ++ (-10,-5) node[left,font=\tiny,inner sep = 1pt] {$g_{\sm{1}}h^{\sm{-1}}$};
\draw[thick] (-10,-8) -- ++ (-10,0) node[left,font=\tiny,inner sep = 1pt] {$g_{\sm{1}}h^{\sm{-1}}$};
\draw[thick] (-10,-8) -- ++ (-10,5) node[font=\tiny,below,inner sep = 2pt] {$i_{\sm{2}}$};
\node[left,font=\tiny,inner sep = 1pt] at (-20,-3) {$g_{\sm{1}}h^{\sm{-1}}$};
\node[left,font=\tiny,inner sep = 1pt] at (-20,13) {$h^{\sm{-1}}$};
\filldraw[fill=black] (-10,16) circle (10pt);
\filldraw[fill=black] (-10,-16) circle (10pt);
\filldraw[fill=black] (-10,0) circle (10pt);
\filldraw[fill=black] (-20,-13) circle (10pt);
\filldraw[fill=black] (-20,-3) circle (10pt);
\filldraw[fill=black] (-20,-8) circle (10pt);
\filldraw[fill=black] (-20,13) circle (10pt);
\filldraw[fill=black] (-20,3) circle (10pt);
\filldraw[fill=black] (-20,8) circle (10pt);
\node[font=\tiny,rectangle,draw=black,fill=white,inner sep=0.5] at (0,0) {$\leftidx{^h}\phi$};
\node[font=\tiny,rectangle,draw=black,fill=white,inner sep=0.5] at (-10,8) {$\phi_1$};
\node[font=\tiny,rectangle,draw=black,fill=white,inner sep=0.5] at (-10,-8) {$\phi_2$};
\filldraw[fill=white,draw=black] (10,0) circle (10pt);
}=\tikz[scale=0.1,baseline=-5]{
\draw[thick] (10,0) node[right] {$hgh^{-1}hg_2h^{-1}hg_1h^{-1}$} -- (0,0) -- ++ (-10,16) node[left,font=\tiny,inner sep = 1pt] {$h^{\sm{-1}}$};
\draw[thick] (0,0) -- ++ (-10,8);
\draw[thick] (0,0) -- ++ (-10,0) node[left,font=\tiny,inner sep = 1pt] {$h^{\sm{-1}}hg_{\sm{1}}h^{\sm{-1}}$};
\draw[thick] (0,0) -- ++ (-10,-8);
\draw[thick] (0,0) -- ++ (-10,-16) node[left,font=\tiny,inner sep = 1pt] {$h^{\sm{-1}}hg_{\sm{2}}h^{\sm{-1}}hg_{\sm{1}}h^{\sm{-1}}$};
\draw[thick] (-10,8) -- ++ (-10,-5) node[left,font=\tiny,inner sep = 1pt] {$h^{\sm{-1}}$};
\draw[thick] (-10,8) -- ++ (-10,0) node[left,font=\tiny,inner sep = 1pt] {$h^{\sm{-1}}$};
\draw[thick] (-10,8) -- ++ (-10,5) node[font=\tiny,below,outer sep = 0pt,inner sep = 2pt ] {$i_{\sm{1}}$};
\draw[thick] (-10,-8) -- ++ (-10,-5) node[left,font=\tiny,inner sep = 1pt] {$h^{\sm{-1}}hg_{\sm{1}}h^{\sm{-1}}$};
\draw[thick] (-10,-8) -- ++ (-10,0) node[left,font=\tiny,inner sep = 1pt] {$h^{\sm{-1}}hg_{\sm{1}}h^{\sm{-1}}$};
\draw[thick] (-10,-8) -- ++ (-10,5) node[font=\tiny,below,inner sep = 2pt] {$i_{\sm{2}}$};
\node[left,font=\tiny,inner sep = 1pt] at (-20,-3) {$h^{\sm{-1}}hg_{\sm{1}}h^{\sm{-1}}$};
\node[left,font=\tiny,inner sep = 1pt] at (-20,13) {$h^{\sm{-1}}$};
\filldraw[fill=black] (-10,16) circle (10pt);
\filldraw[fill=black] (-10,-16) circle (10pt);
\filldraw[fill=black] (-10,0) circle (10pt);
\filldraw[fill=black] (-20,-13) circle (10pt);
\filldraw[fill=black] (-20,-3) circle (10pt);
\filldraw[fill=black] (-20,-8) circle (10pt);
\filldraw[fill=black] (-20,13) circle (10pt);
\filldraw[fill=black] (-20,3) circle (10pt);
\filldraw[fill=black] (-20,8) circle (10pt);
\node[font=\tiny,rectangle,draw=black,fill=white,inner sep=0.5] at (0,0) {$\leftidx{^h}\phi$};
\node[font=\tiny,rectangle,draw=black,fill=white,inner sep=0.5] at (-10,8) {$\leftidx{^{h^{\sm{-1}}h}}\phi_1$};
\node[font=\tiny,rectangle,draw=black,fill=white,inner sep=0.5] at (-10,-8) {$\leftidx{^{h^{\sm{-1}}h}}\phi_2$};
\filldraw[fill=white,draw=black] (10,0) circle (10pt);
}.\]
\end{proof}

\boldmath
\subsubsection*{$G$-twisted Hochschild differential and cup product}
\unboldmath
Now we introduce differentials on $G$-twisted Hochschild cochains. We will always identify
$$
  C^\bullet(A,A)\cong C^\bullet(A, A\e) \hookrightarrow  C^\bullet(A ,A[G]) 
$$
as the identity sector of $C^\bullet(A ,A[G])$. In particular, we identify the product and the curving of $A$
$$
  m_2\in C^2(A,A\e), \quad m_0\in C^0(A, A\e),
$$
as $G$-invariant cochains in $C^\bullet(A,A[G])$.
\begin{defn} We define the \emph{Hochschild differential} $\p_H$ and the \emph{curving differential} $\df_W$ on $G$-twisted Hochschild cochains by
$$
 \p_H(\phi) = (-1)^{\abs{\phi}-1}m_2\{\phi\}-\phi\{m_2\}, \quad \df_W(\phi) = \phi\{m_0\}, \quad \phi\in C^\bullet(A,A[G]).
$$
We also denote
$$
  \p_H^W = \p_H+ \df_W.
$$
\end{defn}

\begin{defn}We define the \emph{cup product} on $G$-twisted Hochschild cochains by
$$
   \phi_1\cup \phi_2=(-1)^{|\phi_1|(\abs{\phi_2}-1)}m_2\{\phi_1, g_1^*\phi_2\}, \quad \phi_i\in C^\bullet(A, Ag_i).
$$
Note that  this is the usual cup product that arises from the algebra structure on $A[G]$ by Definition \ref{def: twisted brace structure}.

\end{defn}

\begin{lem}\label{lem: twisted ginfty algebra} The following $G$-twisted version of higher homotopy identities holds,
\begin{align}
&\p_H(\phi)\lrf{\phi_1,\phi_2,\cdots, \phi_n} \nonumber \\
=&~ (-1)^{\xi_n} \p_H \big(\phi\lrf{\phi_1,\cdots, \phi_n}\big) - \sum_{k=1}^n(-1)^{\xi_k}\phi\lrf{\phi_1,\cdots, \phi_{k-1},\p_H(\phi_k),\phi_{k+1},\cdots, \phi_n}\nonumber\\
&+(-1)^{\abs{\phi_1}\xi'_1} \phi_1 \cup (g_1^{-1})^*\phi\lrf{\phi_2,\phi_3,\cdots, \phi_n}\nonumber\\
&+\sum_{k=1}^{n-1} (-1)^{\xi_k+\abs{\phi_k}\abs{\phi_{k+1}}} \phi\lrf{ \phi_1,\cdots, \phi_k \cup (g_i^{-1})^*\phi_{k+1},\cdots,
\phi_{n}}\nonumber\\
&- (-1)^{\abs{\phi}\abs{\phi_n}+\xi_{n-1}\xi'_{n-1}} \phi\lrf{\phi_1,\phi_2,\cdots, \phi_{n-1}} \cup ((g_{n-1}\cdots g_1)^{-1})^*\phi_n,
\end{align}
with
\begin{equation*}
\begin{dcases}\xi_k \coloneqq \abs{\phi_1}+\abs{\phi_2}+\cdots+ \abs{\phi_k}-k,\\
\xi'_k \coloneqq \abs{\phi_{k+1}}+\abs{\phi_{k+2}}+\cdots+ \abs{\phi_n}-(n-k).
\end{dcases}
\end{equation*}
Here $\phi\in C^\bullet(A,Ag),\phi_i\in C^\bullet(A,Ag_i)$. The same is true if $\p_H$ is replaced by $\p_H^W$.
\end{lem}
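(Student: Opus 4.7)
The plan is to derive the identity directly from the $G$-twisted higher pre-Jacobi identity \eqref{eq:twisted high pre-j identity} by expanding both sides into elementary brace expressions. First, unfold
\[
\p_H(\phi)\lrf{\phi_1,\ldots,\phi_n} = (-1)^{\abs{\phi}-1}\,m_2\lrf{\phi}\lrf{\phi_1,\ldots,\phi_n} - \phi\lrf{m_2}\lrf{\phi_1,\ldots,\phi_n},
\]
and apply Proposition \ref{thm:twisted high pre-j identity} to each double brace. Since $m_2\in C^2(A,A\e)$ has arity two and trivial group element, when $m_2$ is the outer cochain only three families of terms survive: a bulk term $m_2\lrf{\phi\lrf{\phi_1,\ldots,\phi_n}}$, a left boundary $m_2\lrf{\phi_1,\phi\lrf{\phi_2,\ldots,\phi_n}}$, and a right boundary $m_2\lrf{\phi\lrf{\phi_1,\ldots,\phi_{n-1}},g^*\phi_n}$, where the shift $g^*$ on $\phi_n$ arises because it comes after $\phi\lrf{\ldots}$ in the outer brace. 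When $m_2$ is the inner cochain, its arity two allows it to absorb zero, one, or two consecutive $\phi_k$'s into its brace, producing terms of the shape $\phi\lrf{\ldots,m_2,\ldots}$, $\phi\lrf{\ldots,m_2\lrf{\phi_k},\ldots}$, and $\phi\lrf{\ldots,m_2\lrf{\phi_k,\phi_{k+1}},\ldots}$.

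Next, expand each term on the right-hand side in the same way: use the definition of $\p_H$ to rewrite $\p_H(\phi\lrf{\phi_1,\ldots,\phi_n})$ and each $\p_H(\phi_k)$ in terms of braces with $m_2$, and apply \eqref{eq:twisted high pre-j identity} to the resulting iterated braces. The key identification, via the cup product definition $\phi_k\cup\phi_{k+1}=(-1)^{\abs{\phi_k}(\abs{\phi_{k+1}}-1)}m_2\lrf{\phi_k,g_k^*\phi_{k+1}}$, is
\[
m_2\lrf{\phi_k,\phi_{k+1}} = (-1)^{\abs{\phi_k}(\abs{\phi_{k+1}}-1)}\,\phi_k\cup (g_k^{-1})^*\phi_{k+1},
\]
and the analogous rewritings for the left and right boundary terms, which produce $\phi_1\cup(g_1^{-1})^*\phi\lrf{\phi_2,\ldots,\phi_n}$ and $\phi\lrf{\phi_1,\ldots,\phi_{n-1}}\cup((g_{n-1}\cdots g_1)^{-1})^*\phi_n$ respectively. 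The cascade in the last term arises because the group element of $\phi\lrf{\phi_1,\ldots,\phi_{n-1}}$ is $g\cdot g_{n-1}\cdots g_1$. After these substitutions, the standalone terms $\phi\lrf{\ldots,m_2,\ldots}$ and the $\phi\lrf{\ldots,m_2\lrf{\phi_k},\ldots}$ and $\phi\lrf{\ldots,\phi_k\lrf{m_2},\ldots}$ terms pair up between the two sides and, combined according to the defining formula $\p_H(\phi_k)=(-1)^{\abs{\phi_k}-1}m_2\lrf{\phi_k}-\phi_k\lrf{m_2}$, verify the equality.

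The main obstacle is the bookkeeping of Koszul signs and group-action shifts. Matching the exponents $\xi_k,\xi'_k$ from \eqref{eq:twisted high pre-j identity} with the sign conventions in the statement requires careful case analysis, and Lemma \ref{lem: twisted brace structures on twisted cochain is G-equivariant} is the essential tool for commuting $(g^{-1})^*$ past brace compositions so that the cascade of shifts $g_k\cdots g_1$ produced by \eqref{eq: twisted brace on Hoch cochains} cancels against the inverse shifts introduced by the cup product. Finally, the curved version with $\p_H$ replaced by $\p_H^W=\p_H+\df_W$ follows by the same argument applied to $m_0$ in addition: since $m_0\in C^0(A,A\e)$ has arity zero and identity group element, inserting $m_0$ in the pre-Jacobi expansion introduces no new $g$-shifts and cannot absorb any $\phi_k$ into its empty brace, so the $\df_W$ contributions on both sides match in exactly the same pattern.
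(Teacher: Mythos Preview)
Your proposal is correct and follows essentially the same approach as the paper: the paper's proof is a single sentence stating that the lemma follows from the twisted higher pre-Jacobi identities (Proposition \ref{thm:twisted high pre-j identity}) together with the $G$-invariance of $m_2$ and $m_0$, and you have spelled out precisely how that deduction goes. One small sharpening: the crucial mechanism is not so much Lemma \ref{lem: twisted brace structures on twisted cochain is G-equivariant} as the $G$-invariance of $m_2$ (and $m_0$) itself---when you expand $\phi\lrf{\phi_1,\ldots,\phi_n}\lrf{m_2}$ via \eqref{eq:twisted high pre-j identity}, the single $\psi=m_2$ acquires shifts $(g_{k-1}\cdots g_1)^*m_2$, and it is the identity $(g_{k-1}\cdots g_1)^*m_2=m_2$ that collapses these to the untwisted form and lets them recombine into the $\p_H(\phi_k)$ terms.
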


\begin{proof} The lemma follows from the twisted higher pre-Jacobi identities and the $G$-invariance of $m_2, m_0$.

\end{proof}

\begin{coro}
The cup product $\cup$ on $C^\bullet(A,A[G])$ satisfies the following twisted commutativity up to homotopy: for any $\phi\in C^\bullet(A,Ag),\phi_1\in C^\bullet(A,Ag_1)$
\begin{align}\label{eq: twisted commutative on cochains}
(-1)^{\abs{\phi_1}}\p_H(\phi\{\phi_1\}) + (\p_H\phi)\{\phi_1\} - (-1)^{|\phi_1|} \phi\{\p_H\phi_1\} = \phi_1 \cup (g_1^{-1})^*\phi-(-1)^{|\phi||\phi_1|}\phi \cup \phi_1.
\end{align}
\end{coro}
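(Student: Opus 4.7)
The plan is to obtain this corollary as the simplest nontrivial case of Lemma \ref{lem: twisted ginfty algebra}, namely the specialization to $n=1$ with a single insertion $\phi_1 \in C^\bullet(A, Ag_1)$. This exactly parallels how the untwisted identity \eqref{Ger2} is extracted from the $n=1$ case of the higher homotopy \eqref{untwisted higher homotopy}, so the work here is really just bookkeeping to confirm that the $G$-action factors $(g_1^{-1})^*$ and $((g_{n-1}\cdots g_1)^{-1})^*$ reduce correctly.

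Concretely, I would set $n=1$ in the displayed identity of Lemma \ref{lem: twisted ginfty algebra} and compute the sign data: $\xi_1 = |\phi_1| - 1$ and $\xi_1' = 0$ (as the defining sum for $\xi_1'$ is empty when $n=k=1$), while $\xi_{n-1} = \xi_0 = 0$ and $\xi_{n-1}' = \xi_0' = |\phi_1| - 1$. The middle summation $\sum_{k=1}^{n-1}$ is empty. The two cup-product terms involve the empty brace $\phi\{\,\}$, which by convention equals $\phi$ itself, and the twist factor $((g_{n-1}\cdots g_1)^{-1})^* = (g_0 \cdots g_1)^{-1\,*}$ is just the identity since the product is empty. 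Therefore the last cup term becomes $-(-1)^{|\phi||\phi_1|}\,\phi \cup \phi_1$, while the first cup term becomes $\phi_1 \cup (g_1^{-1})^*\phi$.

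After these substitutions, the lemma reads
\begin{equation*}
(\partial_H \phi)\{\phi_1\} = -(-1)^{|\phi_1|}\partial_H\!\bigl(\phi\{\phi_1\}\bigr) + (-1)^{|\phi_1|}\phi\{\partial_H \phi_1\} + \phi_1 \cup (g_1^{-1})^*\phi - (-1)^{|\phi||\phi_1|}\phi \cup \phi_1,
\end{equation*}
using $(-1)^{|\phi_1|-1} = -(-1)^{|\phi_1|}$. Transposing the first two terms to the left-hand side yields exactly \eqref{eq: twisted commutative on cochains}.

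The only step that could introduce trouble is the sign reconciliation, so I would verify it once by expanding both sides on a homogeneous test input $a_1 \otimes \cdots$ using Definition \ref{def: twisted brace structure}, checking that the braid of $G$-equivariance factors matches the $(g_1^{-1})^*$ appearing on $\phi$ in the cup term. No new analysis beyond Lemma \ref{lem: twisted ginfty algebra} is needed, since the $G$-invariance of $m_2$ already ensured the lemma itself; thus the corollary is obtained immediately by specialization.
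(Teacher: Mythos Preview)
Your proposal is correct and matches the paper's own proof exactly: the paper simply states that the corollary follows from Lemma~\ref{lem: twisted ginfty algebra} in the case $n=1$. Your sign bookkeeping is accurate, and the observation that the empty product $(g_{n-1}\cdots g_1)^{-1}$ acts as the identity is precisely what is needed.
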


\begin{proof}This follows from Lemma \ref{lem: twisted ginfty algebra} in the case $n=1$.
\end{proof}

\begin{coro} The triple $\{C^\bullet(A,A[G]), \cup, \p_H\}$ defines a differential graded algebra. If we replace $\p_H$ with $\p_H^W$, we get a $\Z/2\Z$-graded differential graded algebra.

\end{coro}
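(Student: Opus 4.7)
The plan is to verify the three DGA axioms: associativity of $\cup$, the Leibniz rule for $\p_H$ with respect to $\cup$, and the identity $\p_H^2=0$. Throughout, the guiding observation is that $\cup$ on $C^\bullet(A,A[G])$ is, by construction (and by the Remark immediately preceding this corollary), nothing but the Hochschild cup product associated to the crossed product algebra $A[G]$ expressed combinatorially via the $G$-twisted braces with $m_2\in C^2(A,A\e)$.

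First, for associativity, I would compute $(\phi_1\cup\phi_2)\cup\phi_3$ and $\phi_1\cup(\phi_2\cup\phi_3)$ directly from the twisted brace formula. Both expansions produce
\[
m_2\bigl\{m_2\{\phi_1,g_1^*\phi_2\},\,(g_1g_2)^*\phi_3\bigr\} \quad \text{and}\quad m_2\bigl\{\phi_1,\,g_1^*m_2\{\phi_2,g_2^*\phi_3\}\bigr\},
\]
and these are equal because $m_2$ is $G$-invariant (so $g_1^*m_2=m_2$) and the product on $A$ is associative (so $m_2\{m_2\}=0$, applied in its pre-Jacobi form); equivalently, this is just the associativity of $A[G]$ pulled back to cochains.

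For the Leibniz rule, I would invoke Lemma \ref{lem: twisted ginfty algebra} (the $G$-twisted higher homotopies) with $n=2$ and $\phi=m_2$. Since $m_2$ lives in the identity sector and satisfies $\p_H(m_2)=0$ and $m_2\{m_2\}=0$, all brace terms of the form $m_2\{m_2,\ldots\}$ vanish and most error terms collapse. Unwinding the signs using $\xi_1=|\phi_1|-1$, $\xi_2=|\phi_1|+|\phi_2|-2$, one is left with exactly
\[
\p_H(\phi_1\cup\phi_2)=\p_H(\phi_1)\cup\phi_2+(-1)^{|\phi_1|}\phi_1\cup\p_H(\phi_2),
\]
in direct analogy with \eqref{Ger1}. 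Finally, $\p_H^2=0$ follows from $\{m_2,m_2\}=0$ together with the pre-Jacobi identity for braces applied to $\p_H=(-1)^{|\phi|-1}\{m_2,\phi\}$, exactly as in the untwisted case. This establishes the first claim.

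For the $\Z/2\Z$-graded version with $\p_H^W=\p_H+\df_W$, the same argument applies once I check that $\df_W$ is also a (odd) derivation for $\cup$ and that $(\p_H^W)^2=0$. The derivation property for $\df_W$ comes from Lemma \ref{lem: twisted ginfty algebra} applied this time with $\phi=m_0$ (instead of $m_2$), using that $m_0$ is in the identity sector and $G$-invariant, together with centrality of $W$ which ensures $m_2\{m_0\}=m_0\cup(-)-(-)\cup m_0$ up to sign vanishes on the level relevant here. The square-zero property reduces to $\p_H^2=\df_W^2=0$ and $[\p_H,\df_W]=0$; the cross term vanishes because $\{m_2,m_0\}=0$ (centrality of $W$). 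The main obstacle is purely bookkeeping: tracking the group-element insertions $(g_i\cdots g_1)^*$ and the extra factors $g_1^*\phi_2$ in the definition of $\cup$ through the twisted higher homotopy identity, and verifying that all these automorphisms act trivially on the $G$-invariant cochains $m_2$ and $m_0$, so that the untwisted proof goes through verbatim.
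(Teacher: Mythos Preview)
Your approach is essentially the paper's: both extract the Leibniz rule from the $n=2$ case of Lemma~\ref{lem: twisted ginfty algebra} with $\phi=m_2$, using $m_2\{m_2\}=0$ and the $G$-invariance of $m_2$. Your extra paragraph on associativity is fine (the paper leaves it implicit), and $\p_H^2=0$ is handled the same way in both.

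There is one small gap in your treatment of the curved case. Applying Lemma~\ref{lem: twisted ginfty algebra} with $\phi=m_0$ does not yield the derivation property for $\df_W$: since $|m_0|=0$, every brace $m_0\{\phi_1,\phi_2\}$, $m_0\{\p_H\phi_i,\ldots\}$, $m_0\{\phi_i\}$ vanishes, and $\p_H(m_0)=0$ by centrality of $W$, so the $n=2$ identity collapses to $0=0$. The paper sidesteps this by using the last sentence of Lemma~\ref{lem: twisted ginfty algebra} (``The same is true if $\p_H$ is replaced by $\p_H^W$'') and then again setting $\phi=m_2$; this gives the Leibniz rule for $\p_H^W$ in one stroke without needing $\df_W$ to be a derivation separately. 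Alternatively, you can get the $\df_W$-derivation law directly from the twisted higher pre-Jacobi identity applied to $m_2\{\phi_1,g_1^*\phi_2\}\{m_0\}$, but not from Lemma~\ref{lem: twisted ginfty algebra} with $\phi=m_0$.
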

\begin{proof} Let $\phi\in C^\bullet(A,Ag),\phi_i\in C^\bullet(A,Ag_i)$. Lemma \ref{lem: twisted ginfty algebra} implies
\begin{align*}
  &\p_H(\phi\{\phi_1,\phi_2\}) - (-1)^{|\phi_2|-1}\phi\{\p_H\phi_1,\phi_2\} - \phi\{\phi_1,\p_H\phi_2\} - (-1)^{\abs{\phi_1} + \abs{\phi_2}} \p_H(\phi)\{\phi_1,\phi_2\}\nonumber \\
   = & (-1)^{(|\phi_1|-1)|\phi_2|}\left( \phi\{\phi_1\cup (g_1^{-1})^*\phi_2\} - (-1)^{(|\phi|-1)\abs{\phi_2}}\phi\{\phi_1\}\cup (g_1^{-1})^*\phi_2-\phi_1\cup (g_1^{-1})^*\phi\{\phi_2\} \right).
\end{align*}
Set $\phi=m_2$ and use $m_2\{m_2\}=0$, we find
\begin{align}\label{eq: dga structure}
\p_H(\phi_1\cup \phi_2)=\p_H(\phi_1)\cup \phi_2+(-1)^{|\phi_1|}\phi_1\cup \p_H(\phi_2).
\end{align}
The proof for $\p_H^W$ is similar.
\end{proof}

\begin{defn}\label{defn-Hochschild-cohomology} Let
\begin{align*}
   \mathsf{HH}(A,A[G])= \mathsf{H}(C^\bullet(A,A[G]), \p_H), \quad    \mathsf{HH}_c(A,A[G])= \mathsf{H}(C_c^\bullet(A,A[G]), \p_H)
\end{align*}
denote the $G$-twisted Hochschild cohomologies and similarly for the curved case
\begin{align*}
   \mathsf{HH}(A_W,A_W[G])= \mathsf{H}(C^\bullet(A,A[G]), \p_H+\df_W), \quad    \mathsf{HH}_c(A_W,A_W[G])= \mathsf{H}(C_c^\bullet(A,A[G]), \p_H+\df_W).
\end{align*}
All the above cohomologies carry a natural $G$-action induced by \eqref{eq: group action on twisted cochains}.
\end{defn}

\begin{thm}\label{thm: dga on G-twisted Hochschild} The cup product $\cup$ defines $\Z$-graded algebras on $\mathsf{HH}(A,A[G]), \mathsf{HH}_c(A,A[G])$  and $\Z/2\Z$-graded algebras on $\mathsf{HH}(A_W,A_W[G]),\mathsf{HH}_c(A_W,A_W[G])$ satisfying the twisted commutativity relation
$$
    [\phi_1]\cup [\phi_2]=(-1)^{|\phi_1||\phi_2|}[\phi_2]\cup (g_2^{-1})^*[\phi_1], \quad \phi_i\in C^\bullet(A,Ag_i).
$$
Moreover, their $G$-invariant subspaces, $\mathsf{HH}(A,A[G])^G$, $\mathsf{HH}_c(A,A[G])^G$, $\mathsf{HH}(A_W,A_W[G])^G$ and\newline $\mathsf{HH}_c(A_W,A_W[G])^G$, inherit natural Gerstenhaber algebra structures.
\end{thm}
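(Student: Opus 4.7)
The plan is to reduce the theorem to the cochain-level identities already established in Lemma~\ref{lem: twisted ginfty algebra} and equation (\ref{eq: twisted commutative on cochains}), and then pass to cohomology.

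First, the associative algebra structure on each of the four cohomologies is inherited directly from the dga structure on the corresponding cochain complex $(C^\bullet(A,A[G]),\cup,\p_H)$ (respectively with $\p_H^W$), which was established in the corollary immediately preceding the theorem. Since $\p_H$ and $\p_H^W$ are derivations of the cup product at the cochain level, $\cup$ descends to a well-defined associative product on cohomology, $\Z$-graded in the Hochschild case and $\Z/2\Z$-graded in the curved case, and the compact-type versions follow by restriction.

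Second, to obtain the twisted commutativity I apply (\ref{eq: twisted commutative on cochains}) to closed cocycles $\phi\in C^\bullet(A,Ag)$ and $\phi_1\in C^\bullet(A,Ag_1)$. Since $\p_H\phi=\p_H\phi_1=0$, the left-hand side collapses to $(-1)^{|\phi_1|}\p_H(\phi\{\phi_1\})$, a coboundary. Thus in cohomology
$$
\phi_1\cup (g_1^{-1})^*\phi = (-1)^{|\phi||\phi_1|}\phi\cup\phi_1.
$$
Relabeling $(\phi,\phi_1,g,g_1)$ as $(\phi_2,\phi_1,g_2,g_1)$ and solving for $[\phi_1]\cup[\phi_2]$ yields the stated relation. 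The same argument applies verbatim with $\p_H^W$.

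Third, for the Gerstenhaber structure on $G$-invariants, I define the bracket on $G$-twisted Hochschild cochains by the standard formula
$$
\{\phi_1,\phi_2\} \coloneqq \phi_1\{\phi_2\}-(-1)^{(|\phi_1|-1)(|\phi_2|-1)}\phi_2\{\phi_1\}.
$$
By Lemma~\ref{lem: twisted brace structures on twisted cochain is G-equivariant} this bracket preserves $G$-invariance, and on the $G$-invariant subspace the action of $(g^{-1})^*$ is trivial, so the twisted commutativity from step two becomes the strict graded commutativity required by Definition~\ref{def:Gerstenhaber algebra}. The remaining axioms are verified from Lemma~\ref{lem: twisted ginfty algebra}: the $n=1$ homotopy, symmetrized in $\phi_1,\phi_2$, yields $\p_H\{\phi_1,\phi_2\}=(-1)^{|\phi_2|-1}\{\p_H\phi_1,\phi_2\}+\{\phi_1,\p_H\phi_2\}$ once the cup-product terms cancel on invariants; the $n=2$ homotopy produces the shifted Poisson (Leibniz) identity up to a coboundary; and the graded Jacobi identity follows by applying the $G$-twisted higher pre-Jacobi identity (Proposition~\ref{thm:twisted high pre-j identity}) in its $n=1$, $m=2$ case and antisymmetrizing. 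In each computation, the group twists $(g_{i}\cdots g_{1})^{*}$ that decorate the inner arguments act trivially after restriction to $G$-invariants, recovering the familiar (untwisted) identities.

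The main obstacle is the bookkeeping of signs and $G$-twists in the Poisson identity, where terms such as $\phi\{\phi_1\cup(g_1^{-1})^*\phi_2\}$ and $\phi\{\phi_1\}\cup(g_1^{-1})^*\phi_2$ appear in the cochain-level identity; checking that these collapse onto the standard Poisson expression once all three arguments are $G$-invariant is the only step requiring genuine care. The curved cases are identical, because $m_0\in C^0(A,A\e)\subset C^\bullet(A,A[G])$ is $G$-invariant, so every higher homotopy of Lemma~\ref{lem: twisted ginfty algebra} holds with $\p_H^W$ in place of $\p_H$, and the degree shift modulo $2$ is the only modification.
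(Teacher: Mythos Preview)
Your proposal is correct and follows essentially the same approach as the paper: derive the algebra structure and twisted commutativity from the cochain-level identities \eqref{eq: twisted commutative on cochains} and \eqref{eq: dga structure}, then observe that on the $G$-invariant subcomplex $C^\bullet(A,A[G])^G$ all the $(g_i\cdots g_1)^*$-twists in Proposition~\ref{thm:twisted high pre-j identity} and Lemma~\ref{lem: twisted ginfty algebra} become identities, so the twisted higher homotopies reduce to the untwisted ones \eqref{untwisted higher homotopy}, and the Gerstenhaber axioms follow exactly as in \eqref{eq: Gerstenhaber bracket}, \eqref{Ger2}--\eqref{Ger4}, \eqref{Ger1}. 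Your write-up is in fact more explicit than the paper's about which $n$-level identity yields which Gerstenhaber axiom, but the logic is identical.
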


\begin{proof} The induced cup product on cohomologies and twisted commutativity follow from \eqref{eq: twisted commutative on cochains} \eqref{eq: dga structure}. To see Gerstenhaber algebra structures on $G$-invariant cohomologies, we consider for example
$$
 \mathsf{HH}(A,A[G])^G= \mathsf{H}(C^\bullet(A,A[G])^G, \p_H),
$$
where $C^\bullet(A,A[G])^G$ are $G$-invariant cochains. On $C^\bullet(A,A[G])^G$, the $G$-twisted higher pre-Jacobi identities \eqref{eq:twisted high pre-j identity} reduce to the same form as the untwisted one \eqref{untwisted higher homotopy}, from which we deduce the Gerstenhaber algebra structures by \eqref{eq: Gerstenhaber bracket} \eqref{Ger2} \eqref{Ger3}\eqref{Ger4}\eqref{Ger1}.

\end{proof}

\begin{rem}
  The $G$-twisted commutativity of the cup product is also obtained by Shklyarov in \cite{S17}.
\end{rem}}

\begin{prop}\label{lem:subgp lem} Let $H$ be a subgroup of $G$, then the inclusion
\begin{equation*}C^\bullet({A},A[H])\hookrightarrow C^\bullet({A},A[G])\end{equation*}
induce an embedding of $\Z/2\Z$-graded algebras
\begin{equation*}
 \mathsf{HH}_c(A_W,A_W[H])\hookrightarrow \mathsf{HH}_c(A_W,A_W[G]).\end{equation*}
 The same is true if we consider $\mathsf{HH}(A,A[H]), \mathsf{HH}(A_W,A_W[H])$.
\end{prop}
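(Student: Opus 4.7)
The plan is to exploit the natural grading of $G$-twisted Hochschild cochains by the group element, and observe that the inclusion $C^\bullet(A,A[H])\hookrightarrow C^\bullet(A,A[G])$ is split as a map of cochain complexes, hence automatically injective on cohomology; compatibility with cup product is a separate, easy check.

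First I would record that the cochains decompose as a product (or direct sum, in the compact case) over the group grading,
\begin{equation*}
C^\bullet(A,A[G])=\prod_{g\in G}C^\bullet(A,Ag),\qquad C_c^\bullet(A,A[G])=\bigoplus_{g\in G}C_c^\bullet(A,Ag),
\end{equation*}
and that this grading is preserved by both differentials and by the cup product. Indeed, $m_2\in C^2(A,A\e)$ and $m_0\in C^0(A,A\e)$ lie in the identity sector, and the formula in Definition~\ref{def: twisted brace structure} shows that a brace $\phi\{\phi_1,\dots,\phi_n\}$ lies in the sector $g\,g_n\cdots g_1$ when $\phi\in C^\bullet(A,Ag)$ and $\phi_i\in C^\bullet(A,Ag_i)$. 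Consequently $\p_H(\phi)=(-1)^{|\phi|-1}m_2\{\phi\}-\phi\{m_2\}$ and $\df_W(\phi)=\phi\{m_0\}$ stay in the sector of $\phi$, and the cup product sends $C^\bullet(A,Ag_1)\otimes C^\bullet(A,Ag_2)$ into $C^\bullet(A,Ag_1g_2)$.

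Second, because $H$ is closed under multiplication, the sub-product $C^\bullet(A,A[H])=\prod_{h\in H}C^\bullet(A,Ah)$ is at once a subcomplex and a subalgebra for each of $\p_H$, $\df_W$ and $\p_H^W=\p_H+\df_W$. The complementary subspace $\prod_{g\in G\setminus H}C^\bullet(A,Ag)$ is likewise a subcomplex (the same group-grading argument applies), so the inclusion $\iota\colon C^\bullet(A,A[H])\hookrightarrow C^\bullet(A,A[G])$ admits a chain-map retraction $\pi$ that projects onto the $H$-sectors and kills the rest. This retraction satisfies $\pi\circ\iota=\id$, so on cohomology $\iota^*$ becomes a split monomorphism of graded (respectively $\Z/2\Z$-graded) vector spaces; together with the algebra-map property of $\iota$ established in the previous step, this yields the desired embedding of algebras. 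The same argument works uniformly for $\mathsf{HH}_c(A_W,A_W[H])$, $\mathsf{HH}(A_W,A_W[H])$ and their uncurved counterparts $\mathsf{HH}_c(A,A[H])$, $\mathsf{HH}(A,A[H])$.

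I do not anticipate a genuine obstacle: the only thing to verify carefully is that $\pi$ is a chain map, which is immediate once the group grading is shown to be preserved by $\p_H$ and $\df_W$. The slight subtlety of passing between the product and the direct-sum version simultaneously is handled by the fact that the projection $\pi$ is defined componentwise and therefore makes sense in both topologies.
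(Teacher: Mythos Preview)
Your proof is correct and follows the same idea as the paper: the group grading on $C^\bullet(A,A[G])$ is preserved by $\p_H$, $\df_W$, and $\cup$, so the $H$-sectors form a subalgebra and a direct summand as a complex. The paper's own proof is a single sentence (``It is easy to see that $\mathsf{HH}_c(A_W,A_W[H])$ is a $\Z/2\Z$-graded subspace of $\mathsf{HH}_c(A_W,A_W[G])$ and closed under the cup product''); your argument makes explicit the splitting $\pi$ that justifies injectivity on cohomology, which is exactly the content the paper leaves to the reader.
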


\begin{proof} It is easy to see that $\mathsf{HH}_c(A_W,A_W[H])$ is a $\Z/2\Z$-graded subspace of $\mathsf{HH}_c(A_W,A_W[G])$  and closed under the cup product.

\end{proof}

\subsection{A comparison between Gerstenhaber algebras}
In this subsection, $G$ will be a finite group. In this case,  \c{S}tefan \cite{St95} proved that there is an isomorphism as graded vector spaces between $\hch(A[G])$ and $\hch(A,A[G])^G$ (see also \cite{DE05}). Using $G$-twisted brace structures, we extend this to an isomorphism between Gerstenhaber algebras.

Consider the following map \cite{B03,HT10,SW12,CT13}
\begin{align}
\Psi \colon  C^\bullet(A,A[G])^G \rightarrow&~ C^\bullet(A[G],A[G]),\nonumber\\
\phi \quad \mapsto & \quad\Psi(\phi),\nonumber
\end{align}
where for $a_1g_1,a_2g_2,\cdots a_pg_p\in A[G]$,
\begin{equation}\label{defn-Psi-G}
\Psi(\phi)(a_1g_1 \otimes \cdots a_2g_2 \otimes \cdots \otimes a_{p}g_{p})=\phi( a_1\otimes \leftidx{^{g_1}a_2}\otimes\cdots\otimes  \leftidx{^{g_1g_2\cdots g_{p-1}}}a_p) g_1g_2\cdots g_p.
\end{equation}

\begin{lem}\label{lem: psi is ginfty morphism}
$\Psi$ preserves brace structures: for any $\phi, \phi_i \in C^\bullet(A,A[G])^G$,
\begin{equation}\label{eq: Psi couples braces}
\Psi(\phi)\lrf{\Psi(\phi_1),\Psi(\phi_2),\cdots} = \Psi\big(\phi\lrf{\phi_1,\phi_2,\cdots}\big).
\end{equation}
\end{lem}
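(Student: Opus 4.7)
The plan is to verify the identity by direct expansion on a general tensor $a_1g_1\otimes \cdots \otimes a_Ng_N \in A[G]^{\otimes N}$. By $k$-linearity we may decompose $\phi=\sum_h \phi_h$ and $\phi_i=\sum_{h_i}\phi_{i,h_i}$ into sector components $\phi_h=\phi_h^\circ h \in C^\bullet(A,Ah)$, reducing the claim to a comparison of two explicit sums indexed by admissible insertion patterns $\bm{I}$.

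For the left-hand side, the untwisted brace on $C^\bullet(A[G],A[G])$ contributes, at each insertion slot $i_j$, the element
\begin{equation*}
\Psi(\phi_{j,h_j})(a_{i_j}g_{i_j}\otimes\cdots\otimes a_{i_j+\abs{\phi_j}-1}g_{i_j+\abs{\phi_j}-1}) = \phi_{j,h_j}^\circ(a_{i_j}\otimes \leftidx{^{g_{i_j}}}a_{i_j+1}\otimes\cdots)\cdot h_j\,g_{i_j}\cdots g_{i_j+\abs{\phi_j}-1}
\end{equation*}
to the outer $\Psi(\phi_h)$. Evaluating $\Psi(\phi_h)$ then applies the cumulative $G_{i_j-1}\coloneqq g_1\cdots g_{i_j-1}$ twist to the $A$-part of this insertion, and the telescoping identity $G_{i_j-1}\cdot h_j = \leftidx{^{G_{i_j-1}}}h_j\cdot G_{i_j-1}$ inside $G$ propagates each $h_j$ to the left, producing the overall sector label $h\cdot \prod_j \leftidx{^{G_{i_j-1}}}h_j \cdot g_1\cdots g_N$ while twisting the remaining $A$-inputs accordingly.

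The right-hand side, by the twisted brace formula \eqref{eq: twisted brace on Hoch cochains} followed by $\Psi$, produces the same outer input tensor to $\phi_h^\circ$, except that the inputs of $\phi_{j,h_j}^\circ$ are twisted by the product $h_{j-1}\cdots h_1$ instead of by $G_{i_j-1}$. The two presentations are reconciled by the identity
\begin{equation*}
\leftidx{^g}\bigl(\phi_{j,h_j}^\circ(x_1\otimes\cdots)\bigr)=\phi_{j,gh_jg^{-1}}^\circ(\leftidx{^g}x_1\otimes\leftidx{^g}x_2\otimes\cdots),
\end{equation*}
which is a direct consequence of the $G$-invariance $g^*(\phi_{j,h_j})=\phi_{j,gh_jg^{-1}}$ of $\phi_j$ (via \eqref{eq: group action on twisted cochains}). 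Applying it with $g=G_{i_j-1}$ pushes the $G_{i_j-1}$-twist inside $\phi_{j,h_j}^\circ$ at the price of relabeling the sector $h_j\mapsto G_{i_j-1}h_jG_{i_j-1}^{-1}$; after reindexing each sector sum over $h_j$, the two sides coincide.

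The main obstacle is not conceptual but combinatorial: for each insertion pattern $\bm{I}$, one has to track the cumulative group factor sitting in front of every $a_\ell$ and within every intermediate sector label, and then check that after the $G$-invariance rewrite all twists agree. The signs require no adjustment, since both brace formulas assign the same sign $(-1)^{\sum_j (i_j-1)(\abs{\phi_j}-1)}$ to each pattern. The case $k=1$ already exhibits the full mechanism, and the general case is obtained by iterating the same bookkeeping.
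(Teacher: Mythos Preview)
Your approach is correct and essentially identical to the paper's: both expand on a generic element of $A[G]^{\otimes N}$, decompose the cochains into sector components, and invoke the $G$-invariance identity $\leftidx{^{g}}\!\bigl(\phi_{j,h_j}^\circ(x_1\otimes\cdots)\bigr)=\phi_{j,\,gh_jg^{-1}}^\circ(\leftidx{^{g}}x_1\otimes\cdots)$ to match the two sides after reindexing the sector sums. One small correction to your bookkeeping sketch: for $j\geq 2$ the cumulative twist that $\Psi(\phi_h)$ applies to the $j$-th insertion is not simply $G_{i_j-1}$ but $G_{i_1-1}\,h_1\,g_{i_1}\cdots g_{i_2-1}\,h_2\cdots g_{i_j-1}$ (the earlier sector labels $h_1,\dots,h_{j-1}$ interleave with the $g$'s), and it is only after the telescoping reindexing you allude to that this becomes $h_{j-1}\cdots h_1\,G_{i_j-1}$, matching the twisted-brace side; as you say, the $k=1$ case already displays the full mechanism.
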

\begin{proof}
Let $\phi_k$ be a $G$-invariant $G$-twisted cochain. We write
\begin{equation*}
\phi_k = \sum_{g\in G}\phi^g_kg, \quad \phi^g_k  \in C^\bullet(A,A).
\end{equation*}
$G$-invariance implies that
\begin{equation*}
h^*(\phi_k^g) = \phi_k^{hgh^{-1}}.
\end{equation*}

Given input $a_1h_1\otimes a_2h_2\otimes \cdots$, the left half side of (\ref{eq: Psi couples braces}) gives a sum of terms like
\begin{align*}
&\pm \phi^g \Big(a_1\otimes \leftidx{^{h_1}}a_2\otimes \cdots \otimes \leftidx{^{\hslash_{i_1}}}\phi^{\hslash_{i_1}^{-1}g_1\hslash_{i_1}}_1\big(a_{i_1}\otimes\leftidx{^{h_{i_1}}} a_{i_1+1}\otimes \cdots \big)\otimes \cdots\nonumber\\ &\qquad\cdots \leftidx{^{g_{k-1} g_{k-2} \cdots g_1 \hslash_{i_k} }}\phi_k^{\lr{g_{k-1}\cdots g_1 \hslash_{i_k} }^{-1}g_k{g_{k-1} \cdots g_1  \hslash_{i_k} }}(a_{i_k}\otimes \cdots)\otimes \cdots \Big)g\cdots g_2g_1h_1h_2\cdots\nonumber\\
=&\pm \phi^g \Big(a_1\otimes \leftidx{^{h_1}}a_2\otimes \cdots \otimes \phi^{g_1}_1\big(\leftidx{^{\hslash_{i_1}}}a_{i_1}\otimes\leftidx{^{\hslash_{i_1}h_{i_1}}} a_{i_1+1}\otimes \cdots \big)\otimes \cdots\nonumber\\ &\qquad\quad\cdots \phi_k^{g_k}(\leftidx{^{g_{k-1} g_{k-2} \cdots g_1 \hslash_{i_k} }}a_{i_k}\otimes \cdots)\otimes \cdots \Big)g\cdots g_2g_1h_1h_2\cdots,
\end{align*}
where $\hslash_i \coloneqq h_1h_2\cdots h_i$. The right half side of (\ref{eq: Psi couples braces}) gives a sum of terms like
\begin{align*}
&\pm \phi^g \Big(a_1\otimes \leftidx{^{h_1}}a_2\otimes \cdots \otimes \phi^{g_1}_1\big(\leftidx{^{\hslash_{i_1}}}a_{i_1}\otimes\leftidx{^{\hslash_{i_1}h_{i_1}}} a_{i_1+1}\otimes \cdots \big)\otimes \cdots\nonumber\\ &\qquad\quad\cdots \phi_k^{g_k}(\leftidx{^{g_{k-1} g_{k-2} \cdots g_1 \hslash_{i_k} }}a_{i_k}\otimes \cdots)\otimes \cdots \Big)g\cdots g_2g_1h_1h_2\cdots.
\end{align*}
The Lemma follows.
\end{proof}

\begin{thm}\label{thm: comparison of Hochschild}Let $G$ be a finite group.  Then $\Psi$ defined by \eqref{defn-Psi-G} induces an isomorphism between $\Z$-graded Gerstenhaber algebras
$$
  \Psi\colon \hch(A,A[G])^G\to  \hch(A[G]).
$$
\end{thm}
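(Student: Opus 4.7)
The plan is to bootstrap from Lemma \ref{lem: psi is ginfty morphism}, which shows that $\Psi$ preserves brace operations, together with the elementary observation that $\Psi$ identifies the defining algebra cochains on the two sides. Concretely, a direct computation from \eqref{defn-Psi-G} and Definition \ref{def:G-twisted curved alg}(a) gives
\[
\Psi(m_2)\bigl((a_1g_1)\otimes (a_2g_2)\bigr) = m_2(a_1\otimes \leftidx{^{g_1}}a_2)\,g_1g_2 = (a_1g_1)\cdot(a_2g_2),
\]
so $\Psi$ sends the product $m_2 \in C^2(A,A\e)\subset C^\bullet(A,A[G])^G$ to the product $m_2^{A[G]}$ on the crossed product $A[G]$. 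The whole Gerstenhaber-algebraic structure (differential, cup product, bracket) on each complex is manufactured from braces with $m_2$ (resp.\ $m_2^{A[G]}$), so this observation will carry most of the weight.

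Second, combining Lemma \ref{lem: psi is ginfty morphism} with $\Psi(m_2)=m_2^{A[G]}$ immediately gives, for $\phi,\phi_1,\phi_2\in C^\bullet(A,A[G])^G$,
\[
\Psi\circ \partial_H = \partial_H\circ \Psi, \qquad \Psi(\phi_1\cup \phi_2) = \Psi(\phi_1)\cup \Psi(\phi_2), \qquad \Psi\bigl(\{\phi_1,\phi_2\}\bigr)=\bigl\{\Psi(\phi_1),\Psi(\phi_2)\bigr\}.
\]
The one subtle point is the twist $g_1^*$ in the definition of $\cup$ on $C^\bullet(A,A[G])$, absent from $\cup$ on $C^\bullet(A[G],A[G])$. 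Expanding $\phi_i=\sum_{g_i}\phi_i^{g_i}g_i$ and using $G$-invariance $h^*(\phi_i^{g_i})=\phi_i^{hg_ih^{-1}}$, the same rebracketing of $G$-factors that appears in the proof of Lemma \ref{lem: psi is ginfty morphism} shows that the transport of the group elements through the $A[G]$-arguments on the target side reproduces precisely the $g_1^*$-twist on the source side; in particular $\Psi$ is a well-defined morphism of differential graded algebras compatible with the Gerstenhaber brackets, at the cochain level.

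Finally, to upgrade this cochain-level morphism to an isomorphism of cohomological Gerstenhaber algebras, I would invoke \c{S}tefan's theorem (Cor.~3.4 in \cite{St95}), recalled in the introduction: $\hch(A[G])$ and $\hch(A,A[G])^G$ are isomorphic as $\Z$-graded vector spaces. The cleanest route is to check that the isomorphism constructed by \c{S}tefan is induced by $\Psi$ itself, using the standard bar-resolution model; alternatively one may build an explicit quasi-inverse by restriction and averaging over $G$ (as used in \cite{DE05}) and verify directly that it is a homotopy inverse of $\Psi$. Since a $G$-equivariant chain map inducing an isomorphism of underlying vector spaces automatically induces an isomorphism of the induced Gerstenhaber structures, this concludes the argument.

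The main obstacle I anticipate is not the algebraic compatibilities, which are essentially formal from the brace-preservation lemma, but rather the bookkeeping needed to identify $\Psi$ with \c{S}tefan's vector-space isomorphism (or to verify a concrete averaging quasi-inverse). Once this matching is in hand, the isomorphism of Gerstenhaber algebras drops out for free.
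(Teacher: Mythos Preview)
Your proposal is correct and follows essentially the same route as the paper: use Lemma \ref{lem: psi is ginfty morphism} together with $\Psi(m_2)=m_2^{A[G]}$ to get compatibility with $\partial_H$, $\cup$, and $\{-,-\}$ at the cochain level, then invoke an external result for the vector-space isomorphism on cohomology. The only minor difference is that the paper sidesteps your anticipated bookkeeping obstacle by citing \cite{B03} (Baranovsky) rather than \c{S}tefan \cite{St95}: Baranovsky's result is stated directly for the explicit map $\Psi$, so no separate matching argument is needed.
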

\begin{proof} It is easy to see that $\Psi$ is compatible with Hochschild differential $\p_H$. By \cite{B03}, $\Psi$ induces a $\Z$-graded vector space isomorphism between $\hch(A,A[G])^G$ and  $\hch(A[G])$. The theorem is now a formal consequence of Lemma \ref{lem: psi is ginfty morphism}.
\end{proof}

Now we consider the curved case.

\begin{thm}\label{thm: comparison of curved Hochschild} Let $G$ be a finite group, $(A,W,G)$ be a $G$-twisted curved algebra.  Then $\Psi$ defined by \eqref{defn-Psi-G} induces an isomorphism between $\Z/2\Z$-graded Gerstenhaber algebras
$$
  \Psi\colon \mathsf{HH}_c(A_W,A_W[G])^G\to  \mathsf{HH}_c(A_W[G]).
$$
\end{thm}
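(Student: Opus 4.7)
The plan is to extend the argument for Theorem \ref{thm: comparison of Hochschild} by showing that $\Psi$ additionally commutes with the curving differential $\df_W$, and then transferring the $\p_H$-quasi-isomorphism to a $(\p_H+\df_W)$-quasi-isomorphism.

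First I would verify that $\Psi$ intertwines $\df_W$. Since $W$ is $G$-invariant, the curving cochain $m_0\in C^0(A,A\e)$ is $G$-invariant, and $\Psi(m_0)\in C^0(A[G],A[G])$ is precisely the curving of the crossed-product curved algebra $(A[G],W\e)$. By Lemma \ref{lem: psi is ginfty morphism},
$$
\Psi(\df_W\phi)=\Psi\bigl(\phi\{m_0\}\bigr)=\Psi(\phi)\{\Psi(m_0)\}=\df_W\Psi(\phi).
$$
Combined with $\Psi\circ\p_H=\p_H\circ\Psi$ from the proof of Theorem \ref{thm: comparison of Hochschild}, this makes $\Psi$ a chain map for $\p_H^W=\p_H+\df_W$, and hence induces a well-defined map $\mathsf{HH}_c(A_W,A_W[G])^G\to\mathsf{HH}_c(A_W[G])$.

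Second, to show this induced map is an isomorphism, I would invoke homological perturbation. Upgrade the underlying $\p_H$-quasi-isomorphism of Theorem \ref{thm: comparison of Hochschild} to a strong deformation retract datum $(\Psi,\Psi',h)$ with $\Psi\Psi'=\id$ and $\Psi'\Psi-\id=\p_H h+h\p_H$; such data is available, for example, via the averaged bar construction of \cite{B03,St95}. Treat $\df_W$ as a perturbation of $\p_H$ (with which it commutes). The homological perturbation lemma then produces a quasi-isomorphism for the perturbed differential $\p_H+\df_W$, provided the geometric series $\sum_{n\geq 0}(h\df_W)^n$ acts locally finitely. Since $\df_W$ and $h$ both lower cohomological degree by one, $(h\df_W)^n$ lowers it by $2n$; every element of the compact type complex $C^\bullet_c$ has finite degree support, so this series terminates on each element, yielding the desired quasi-isomorphism and hence the vector space isomorphism $\mathsf{HH}_c(A_W,A_W[G])^G\to\mathsf{HH}_c(A_W[G])$.

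Finally, the cup product, Gerstenhaber bracket and all higher brace operations on both sides are constructed from $m_2$, $m_0$ and the respective brace structures, which $\Psi$ preserves by Lemma \ref{lem: psi is ginfty morphism}. Hence the induced map on cohomology is an isomorphism of $\Z/2\Z$-graded Gerstenhaber algebras, as claimed. The main obstacle is the second step: one must extract an explicit chain-level contracting homotopy $h$ from the abstract isomorphism of \cite{B03,St95}, and the crucial role of the compact type hypothesis (direct sum rather than direct product) is to guarantee termination of the perturbation series---for the direct product version the sum would involve arbitrarily long chains of $h\df_W$ and no longer be well-defined.
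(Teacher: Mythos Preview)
Your strategy is sound and would work, but it differs from the paper's proof in the key step of upgrading the $\p_H$-quasi-isomorphism to a $(\p_H+\df_W)$-quasi-isomorphism. Instead of homological perturbation, the paper introduces a formal variable $u$ of degree $2$, extends $\Psi$ to a map of $\Z$-graded complexes
\[
\Psi\colon \bigl(C^\bullet_c(A,A[G])^G[u,u^{-1}],\,\p_H+u\,\df_W\bigr)\longrightarrow \bigl(C^\bullet_c(A[G],A[G])[u,u^{-1}],\,\p_H+u\,\df_W\bigr),
\]
identifies the $\Z/2\Z$-graded curved cohomology with fixed-degree cohomology of this complex, and then runs a spectral sequence associated to the filtration $F^p=u^pC^\bullet_c[u]$. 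Because the complexes are of compact type the filtration is exhaustive and the spectral sequence converges; the $E_1$-page is computed by $\p_H$-cohomology, where Theorem~\ref{thm: comparison of Hochschild} already gives an isomorphism, and the result follows by the comparison theorem for spectral sequences.

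The advantage of the paper's route is that it requires only that $\Psi$ be a chain map for both $\p_H$ and $\df_W$ and a $\p_H$-quasi-isomorphism---no chain-level homotopy inverse is needed. Your approach, by contrast, hinges precisely on the obstacle you identify: producing an explicit deformation retract datum $(\Psi,\Psi',h)$ from the references \cite{B03,St95}. Those sources provide the isomorphism on cohomology but do not hand you a contracting homotopy, and constructing one compatible with the $G$-invariants and the crossed-product structure is additional work. If you can supply such an $h$, your argument goes through (your degree-counting for termination of $\sum_n(h\df_W)^n$ on compact type is correct and plays the same role as exhaustiveness of the filtration in the paper). But the spectral-sequence argument sidesteps this entirely and is the cleaner path here.
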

\begin{proof} By Lemma \ref{lem: psi is ginfty morphism}, we only need to prove that
$$
  \Psi\colon C^\bullet_c(A,A[G])^G\to C^\bullet_c(A[G])
$$
induce a vector space isomorphism on $(\p_H+\df_W)$-cohomology. Introduce a formal variable $u$ of degree $2$ and we extend $\Psi$ to a cochain map between $\Z$-graded complex
\begin{align}\label{mixed-complex}
\Psi\colon (C^\bullet_c(A,A[G])^G[u,u^{-1}], \p_H+u \df_W)\to (C^\bullet_c(A[G], A[G])[u,u^{-1}], \p_H+u \df_W).
\end{align}
For any $k\in \Z$, there are natural identifications
\begin{align*}
   \mathsf{HH}^{\bar 0}_c(A_W,A_W[G])^G&= \mathsf{H}^{2k}(C^\bullet_c(A,A[G])^G[u], \p_H+u \df_W)\\
  \mathsf{HH}^{\bar 1}_c(A_W,A_W[G])^G&= \mathsf{H}^{2k+1}(C^\bullet_c(A,A[G])^G[u], \p_H+u \df_W)
\end{align*}
and similarly for $\mathsf{HH}_c(A_W[G])$. It suffices to show that \eqref{mixed-complex} is a quasi-isomorphism.

Consider the decreasing filtration $\cdots \subset F^{p+1}\subset F^{p}\subset F^{p-1}\subset\cdots$ where
$$
    F^p = u^p C^\bullet_c[u].
$$
Here $C^\bullet_c$ denotes $C^\bullet_c(A,A[G])^G$ or $C^\bullet_c(A[G],A[G])$. Since we work with compact type complex, this filtration is exhaustive (see \cite{W95} for details) 
and there is an associated convergent spectral sequence. $\Psi$ in \eqref{mixed-complex} induces an isomorphism between $E_1$-pages which are computed by $\p_H$-cohomologies. It follows that \eqref{mixed-complex} defines a quasi-isomorphism.
\end{proof}


\section{Orbifold Landau-Ginzburg B-models}

In this section we study the orbifold Landau-Ginzburg model associated to a $G$-twisted curved algebra
$$
(A=\C[x_1,\cdots,x_N], W, G),
$$
where  $W$ is a weighted homogeneous  invertible polynomial and $G$ is a subgroup of $(\C^*)^N$ preserving $W$. We show that the cohomology $\mathsf{HH}_c(A_W,A_W[G])$ is a $G$-Frobenius algebra. In particular, the $G$-invariant subspace $\mathsf{HH}_c(A_W,A_W[G])^G$ has an induced Frobenius algebra structure. We give closed formulae 
for the cup product on $\mathsf{HH}_c(A_W,A_W[G])^G$ for all Fermat, Loop and Chain types.  This is computed via an explicit homotopy between Koszul resolution and bar resolution.

For convenience, we fix the following notations in this section.
\begin{defn} For $g\in G$, we let
$$
  \fix(g)=\{v\in \C^N\mid \leftidx{^g}v=v\}\subset \C^N
$$
be the fixed locus of $g$ and
$$
A_g=\C[\fix(g)]   
$$
denote polynomial functions on $ \fix(g)$. We write
$$
N_g=\dim{\C}\fix(g), \quad W_g=W|_{\fix(g)}\in A_g.
$$
We also denote $\bm{I}_g=\{i_1<i_2<\cdots< i_{N-N_g}\}$ where $\{x_{i_k}\}$'s are variables such that $\leftidx{^g}x_{i_k}\neq x_{i_k}$. $\bm{I}_g$ will be called the moving index of $g$.
\end{defn}

\begin{defn} Let $x_1^{\gamma_1}\cdots x_N^{\gamma_N}$ be a monomial in $A$, and $g\in G$. We define
  \begin{equation}\rho_{i}(g)\big(x_1^{\gamma_1}\cdots x_N^{\gamma_N}\big) = (\leftidx{^g}x_1)^{\gamma_1}\cdots (\leftidx{^g}x_{i-1})^{\gamma_{i-1}}x_{i}^{\gamma_i}\cdots x_N^{\gamma_N},\end{equation}
and the quantum differential operator
\begin{equation}\label{eq: q-diff operator}\p^g_{x_i}\left(x_1^{\gamma_1}\cdots x_N^{\gamma_N}\right)=\begin{dcases}
[\gamma]_{\lambda_i}~x_1^{\gamma_1}\cdots x_i^{\gamma_i-1}\cdots x_N^{\gamma_N}&\text{if } \gamma_i>0,\\
0&\text{else}.
\end{dcases}\end{equation}
Here $\lambda_i$ is the weight: $\leftidx{^g}x_i=\lambda_i  x_i$. $[\gamma]_\lambda$ ($\gamma\geqslant 1$) is defined by
\begin{equation}\label{eqn-quantum-number}
[\gamma]_{\lambda}= 1+\lambda+\lambda^2+\cdots +\lambda^{\gamma-1}.\end{equation}
Both $\rho_i(g)$ and $\p^g_{x_i}$ extend linearly to operators on $A$.
\end{defn}

\subsection{Bar resolution vs. Koszul resolution}\label{section-Koszul}
For simplicity, we work with the reduced bar resolution (or `normalized bar resolution', see \cite{L13}), 
\[\begin{tikzpicture}
\matrix (a) [matrix of math nodes, row sep=3.5em, column sep=2.0em, text height=1.7ex, text depth=0.9ex,font=\scriptsize]
{\cdots&A\otimes \overline{A}^{\otimes 3}\otimes A&A\otimes \overline{A}^{\otimes 2}\otimes A&A\otimes \overline{A}\otimes A&A\otimes A &A&0,\\};
\path[latex-]
(a-1-7) edge (a-1-6);
\path[latex-]
(a-1-6) edge node[above,font=\scriptsize] {$m_2$} (a-1-5);
\path[latex-]
(a-1-5) edge node[above,font=\scriptsize] {$\bdf$} (a-1-4);
\path[latex-]
(a-1-4) edge node[above,font=\scriptsize] {$\bdf$} (a-1-3);
\path[latex-]
(a-1-3) edge node[above,font=\scriptsize] {$\bdf$} (a-1-2);
\path[latex-]
(a-1-2) edge node[above,font=\scriptsize] {$\bdf$} (a-1-1);
\end{tikzpicture}\]
where $\overline{A} = A/\mathbb{C} = \mathbb{C}[x^i]/\mathbb{C}$. It gives a free resolution of $A$ as $A^e$-modules (equivalently $A$-bimodules) where $A^e=A\otimes A^{\mathsf{op}}$. Applying the functor $\hom_{A^e}(\cdot,A[G])$, we obtain the (reduced) G-twisted Hochschild cochain complex
$$
  \p_H\colon C^p(\overline{A},A[G])\to C^{p+1}(\overline{A},A[G]).
$$
Here  $C^p(\overline{A},A[G])= \hom(\overline{A}^{\otimes p}, A[G])$ and $\p_H$ is the Hochschild differential which is also well-defined on the reduced complex. We will denote
\begin{align}
C^\bullet(\overline{A},A[G]) =\mathop{\prod}\limits_{p=0}^\infty C^p(\overline{A},A[G]), \quad
C^\bullet_c(\overline{A},A[G]) = \mathop{\bigoplus}\limits_{p=0}^\infty C^p(\overline{A},A[G]).
\end{align}
The curving differential $\df_W$ is defined the same as before
$$
\df_W\colon C^{p}(\overline A, A[G])\to C^{p-1}(\overline A, A[G]).
$$

Since $A=\C[x_i]$, we have a simpler $A^e$-module resolution of $A$ via the Koszul resolution
\[\begin{tikzpicture}
\matrix (a) [matrix of math nodes, row sep=3em, column sep=1.6em, text height=1.5ex, text depth=0.25ex,font=\scriptsize]
{\cdots&\bigoplus\limits_{i<j<k}e^i e^j e^k A^e&\bigoplus\limits_{i<j}e^i e^jA^e&\bigoplus\limits_{i} e^iA^e&A^e&A& 0,\\};
\path[-latex]
(a-1-1) edge node[above,font=\scriptsize] {$\tilde{\bdf}$} (a-1-2);
\path[-latex]
(a-1-2) edge node[above,font=\scriptsize] {$\tilde{\bdf}$} (a-1-3);
\path[-latex]
(a-1-3) edge node[above,font=\scriptsize] {$\tilde{\bdf}$} (a-1-4);
\path[-latex]
(a-1-4) edge node[above,font=\scriptsize] {$\tilde{\bdf}$} (a-1-5);
\path[-latex]
(a-1-5) edge node[above,font=\scriptsize] {$m$} (a-1-6);
\path[-latex]
(a-1-6) edge (a-1-7);
\end{tikzpicture}\]
where $e^i$'s are odd variables: $\deg e^i=-1$ and $e^ie^j=-e^je^i$.  The Koszul differential $\tilde{\bdf}$ is a derivation of the ring $A^e[e^i]$ generated by
$$
\tilde{\bdf}(e^i) = x_i\otimes 1-1\otimes x_i\in A^e.
$$
 Applying the functor $\hom_{A^e}(\cdot,A[G])$, we get the $G$-twisted Koszul cochain complex
\begin{align}
    \p_K\colon K^\bullet(A,A[G])= A[e_i][G] \to K^{\bullet+1}(A,A[G]).
\end{align}
 Here $e_i$ is the dual basis of $e^i$ such that $\deg(e_i)=1, e_ie_j=-e_j e_i$. The Koszul differential $\p_H$ is
 \begin{align}\label{eqn: koszul differential}
 \p_K (\phi g)=\sum_{i=1}^N (x_i-\leftidx{^g}x_i) e_i \phi g, \quad \phi \in A[e_i],  g\in G.
 \end{align}

\begin{defn}
We define the curving differential on $G$-twisted Koszul cochains
$$
   \tilde{\df}_W\colon K^{p}(A,A[G])\to K^{p-1}(A,A[G])
$$
by
\begin{equation}\label{eq:horizontal diff on kus}
\tilde{\df}_W(a e_{i_1}\cdots e_{i_p} g)= \sum_{k=1}^p (-1)^{k-1}a\rho_{i_k}(g)\big(\p_{x_{i_k}}^g(W)\big)  e_{i_1}\cdots \widehat{e_{i_k}}\cdots e_{i_p}  g.
\end{equation}
Here $a\in A, g\in G$ and $1\leq i_1<\cdots <i_p\leq N$.
\end{defn}

In \cite{SW11}, Shepler and Witherspoon introduced chain maps between these two resolutions,
\[\begin{tikzpicture}
\matrix (a) [matrix of math nodes, row sep=3.5em, column sep=2.0em, text height=1.7ex, text depth=0.9ex,font=\scriptsize]
{ \cdots&\bigoplus\limits_{1\leqslant i<j<k\leqslant N}  e^i e^j e^k A^e&\bigoplus\limits_{1\leqslant i<j\leqslant N} e^i e^j A^e&\bigoplus\limits_{1\leqslant i\leqslant N} e^i A^e & A^e & A &0\\
\cdots&A\otimes \overline{A}^{\otimes 3}\otimes A&A\otimes \overline{A}^{\otimes 2}\otimes A&A\otimes \overline{A}\otimes A&A\otimes A & A &0,\\};
\path[latex-]
(a-1-7) edge (a-1-6);
\path[latex-]
(a-1-6) edge node[above,font=\scriptsize] {$m$} (a-1-5);
\path[latex-]
(a-1-5) edge node[above,font=\scriptsize] {$\tilde{\bdf}$} (a-1-4);
\path[latex-]
(a-1-4) edge node[above,font=\scriptsize] {$\tilde{\bdf}$} (a-1-3);
\path[latex-]
(a-1-3) edge node[above,font=\scriptsize] {$\tilde{\bdf}$} (a-1-2);
\path[latex-]
(a-1-2) edge node[above,font=\scriptsize] {$\tilde{\bdf}$} (a-1-1);
\path[latex-]
(a-2-7) edge (a-2-6);
\path[latex-]
(a-2-6) edge node[above,font=\scriptsize] {$m$} (a-2-5);
\path[latex-]
(a-2-6.97)  edge node[left,font=\scriptsize] {$\simeq$} (a-1-6.-97);
\path[-latex]
(a-2-6.83)  edge node[right,font=\scriptsize] {$\simeq$} (a-1-6.-83);
\path[latex-]
(a-2-5) edge node[above,font=\scriptsize] {$\bdf$} (a-2-4);
\path[latex-]
(a-2-5.97)  edge node[left,font=\scriptsize] {$\simeq$} (a-1-5.-97);
\path[-latex]
(a-2-5.83)  edge node[right,font=\scriptsize] {$\simeq$} (a-1-5.-83);
\path[latex-]
(a-2-4) edge node[above,font=\scriptsize] {$\bdf$} (a-2-3);
\path[latex-]
(a-2-4.97)  edge node[left,font=\scriptsize] {$\Phi_1$} (a-1-4.-97);
\path[-latex]
(a-2-4.83)  edge node[right,font=\scriptsize] {$\Upsilon_1$} (a-1-4.-83);
\path[latex-]
(a-2-3) edge node[above,font=\scriptsize] {$\bdf$} (a-2-2);
\path[latex-]
(a-2-3.97)  edge node[left,font=\scriptsize] {$\Phi_2$} (a-1-3.-97);
\path[-latex]
(a-2-3.83)  edge node[right,font=\scriptsize] {$\Upsilon_2$} (a-1-3.-83);
\path[latex-]
(a-2-2) edge node[above,font=\scriptsize] {$\bdf$} (a-2-1);
\path[latex-]
(a-2-2.97)  edge node[left,font=\scriptsize] {$\Phi_3$} (a-1-2.-97);
\path[-latex]
(a-2-2.83)  edge node[right,font=\scriptsize] {$\Upsilon_3$} (a-1-2.-83);
\end{tikzpicture}\]

\begin{itemize}
\item
The chain map $\Phi$ from Koszul resolution to bar resolution is given by
\begin{align}\label{definition-Phi}
\Phi_p\colon\quad
 e^{i_1}e^{i_2}\cdots e^{i_p}a\otimes b &\mapsto  \sum\limits_{\sigma\in S_p}(-1)^{|\sigma|}a\otimes x_{i_{\sigma(1)}}\otimes\cdots\otimes x_{i_{\sigma(p)}}\otimes b. \quad a, b\in A.
\end{align}
Let $\Phi^*$ denote the induced map on cochains
\begin{align}
\Phi^*_p\colon C^p(\overline{A},A[G])&\rightarrow K^p(A,A[G]),\\
 \phi\quad &\mapsto  \sum_{i_1<\cdots<i_p}\sum\limits_{\sigma\in S_p}(-1)^{|\sigma|}\phi\big(x_{i_{\sigma(1)}}\otimes\cdots\otimes x_{i_{\sigma(p)}}\big)e_{i_1}\cdot e_{i_p}.\nonumber
\end{align}

\item The chain map $\Upsilon$ from bar resolution to Koszul resolution is constructed in the following steps:
\begin{description}
  \item[Step 1] Let $a\otimes a_1\otimes \cdots \otimes a_p\otimes b\in A\otimes \overline{A}^{\otimes p}\otimes A$ where $a_k = \prod\limits_{i=1}^N x_{i}^{\gamma_i^{k}}$ ($\gamma_i^{k}\geqslant 0$).
  \item[Step 2] For each $1\leqslant i_1<i_2<\cdots<i_p\leqslant N$ denoted by $\bm{I}$, let us define $\mathcal{S}(\bm{I})$ to be the set of sequence $\bm{s}= (s_1,s_2,\cdots s_p)$ such that
      \[0\leqslant s_k < \gamma_{i_k}^k.\]
      Given $s\in \mathcal{S}(\bm{I})$, we define a splitting of each $a_k$ into two parts:
      \[\begin{dcases}
      a_{k,\bm{I},\bm{s}}^{2}& = x_{1}^{\gamma_{1}^{k}}\cdots x_{(i_k-1)}^{\gamma_{(i_k-1)}^k} x_{i_k}^{s_k},\\
      a_{k,\bm{I},\bm{s}}^{1}& = x_{i_k}^{\gamma_{i_k}^{k}-1-s_k} x_{(i_k+1)}^{\gamma_{(i_k+1)}^k}\cdots x_{N}^{\gamma_{N}^k}.
      \end{dcases}\]
  \item[Step 3] $\Upsilon$ is defined by
  \begin{equation}\label{notation-Upsilon}
  \Upsilon_p(a\otimes a_1\otimes \cdots \otimes a_p\otimes b) = \sum_{\bm{I},\bm{s}\in \mathcal{S}(\bm{I})}\lr{aa^1_1\cdots a^{1}_p}\otimes \lr{a^{2}_p\cdots a^{2}_1b} e^{i_1}e^{i_2}\cdots e^{i_p},\end{equation}
  where $a_k^1$ and $a_k^2$ are short for $a_{k,\bm{I},\bm{s}}^{1}$ and $a_{k,\bm{I},\bm{s}}^{2}$. The proof for $\Upsilon$ being a chain map can be found in the Appendix of \cite{SW11}.
\end{description}

Let $\Upsilon^*$ denote the induced  map on cochains
\begin{equation}
\Upsilon^*_p\colon K^p(A,A[G])\rightarrow C^p(\overline{A},A[G]).
\end{equation}

For $\psi=a e_{i_1}\cdots e_{i_p}g$ where $1\leqslant i_1<i_2<\cdots<i_p\leqslant N$ denoted by $\bm{I}, a\in A, g\in G$, we have
\begin{align}\label{eq:cochain map upsilon star}
\Upsilon^*_p(\psi)(a_1\otimes \cdots\otimes a_p)=&\psi\big(\Upsilon(1\otimes a_1\otimes \cdots\otimes a_p\otimes 1)\big)\nonumber\\
=& \sum_{\bm{s}\in\mathcal{S}(\bm{I})}a^1_{1}a^1_{2}\cdots a^1_{p}a\lr{\leftidx{^g}a^2_1}\lr{\leftidx{^g}a^2_2}\cdots \lr{\leftidx{^g}a^2_p}g\nonumber\\
=& \rho_{i_1}(g)(\p_{x_{i_1}}^g a_1)\rho_{i_2}(g)(\p_{x_{i_2}}^g a_2)\cdots \rho_{i_p}(g)(\p_{x_{i_p}}^g a_p)ag.
\end{align}
\end{itemize}

\begin{lem}\label{lem: Upsilon are cochain maps}
$\Upsilon^*$ is compatible with the curving differential
\begin{equation} \quad \df_W \circ \Upsilon^* = \Upsilon^* \circ \tilde{\df}_W.
\end{equation}
\end{lem}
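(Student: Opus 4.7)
The plan is a direct term-by-term computation on a basis element $\psi = a\,e_{i_1}\cdots e_{i_p}\,g \in K^p(A,A[G])$ with $i_1<\cdots<i_p$, evaluated on an arbitrary input $a_1\otimes\cdots\otimes a_{p-1}\in\overline{A}^{\otimes(p-1)}$. Both sides decompose as sums indexed by a position $k\in\{1,\dots,p\}$ — the slot where $W$ is inserted on the left, versus the $e_{i_k}$ removed on the right — and the summands will match via formula (\ref{eq:cochain map upsilon star}) together with commutativity of $A$.

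For the left-hand side I would start from $\df_W(\phi)=\phi\{m_0\}$ with $m_0\in C^0(A,A\e)$ of value $W$. Applying the twisted brace formula (\ref{eq: twisted brace on Hoch cochains}), and using that $|m_0|=0$ and $g_{m_0}=\e$ so that the sign reduces to $(-1)^{k-1}$ and no extra group action is applied to the later inputs, one obtains
$$\df_W(\Upsilon^*(\psi))(a_1\otimes\cdots\otimes a_{p-1}) = \sum_{k=1}^{p}(-1)^{k-1}\,\Upsilon^*(\psi)\bigl(a_1\otimes\cdots\otimes a_{k-1}\otimes W\otimes a_k\otimes\cdots\otimes a_{p-1}\bigr).$$
Substituting formula (\ref{eq:cochain map upsilon star}) then writes the $k$-th summand as the product of $\rho_{i_j}(g)(\p^g_{x_{i_j}}a_j)$ for $j\neq k$, multiplied in the $k$-th position by $\rho_{i_k}(g)(\p^g_{x_{i_k}}W)$, times the coefficient $a$ on the right, followed by the group element $g$.

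For the right-hand side I would apply $\Upsilon^*$ termwise to the definition \eqref{eq:horizontal diff on kus} of $\tilde{\df}_W$:
$$\Upsilon^*(\tilde{\df}_W(\psi))(a_1\otimes\cdots\otimes a_{p-1}) = \sum_{k=1}^{p}(-1)^{k-1}\,\Upsilon^*\Bigl(a\,\rho_{i_k}(g)\bigl(\p^g_{x_{i_k}}W\bigr)\,e_{i_1}\cdots\widehat{e_{i_k}}\cdots e_{i_p}\,g\Bigr)(a_1\otimes\cdots\otimes a_{p-1}).$$
A second application of (\ref{eq:cochain map upsilon star}) turns each summand into the product of $\rho_{i_j}(g)(\p^g_{x_{i_j}}a_j)$ for $j\neq k$ together with the factor $a\,\rho_{i_k}(g)(\p^g_{x_{i_k}}W)$ collected on the right. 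Commutativity of $A$ lets me permute that factor into the $k$-th position, matching the left-hand side summand by summand.

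The main subtlety to verify is sign agreement: the Koszul sign $(-1)^{k-1}$ from deleting $e_{i_k}$ must equal the brace sign $(-1)^{(k-1)(|m_0|-1)}=(-1)^{k-1}$ coming from inserting the degree-zero cochain $m_0$ at position $k$. Since $W$ is $G$-invariant and $m_0$ lies in the identity sector, no additional group twists appear, so the identification is clean; this sign bookkeeping and the careful tracking of how $W$ propagates through formula (\ref{eq:cochain map upsilon star}) are the only delicate ingredients.
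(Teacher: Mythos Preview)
Your proposal is correct and is exactly the direct verification the paper alludes to with ``It can be checked directly.'' One small wording fix: when you say ``the product of $\rho_{i_j}(g)(\p^g_{x_{i_j}}a_j)$ for $j\neq k$'', note that on both sides the input attached to index $i_j$ for $j>k$ is $a_{j-1}$, not $a_j$; the factors on the two sides in fact appear in the same order, and commutativity of $A$ is needed only to move the single factor $\rho_{i_k}(g)(\p^g_{x_{i_k}}W)$ from position $k$ to the coefficient slot next to $a$.
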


\begin{proof} It can be checked directly.
\end{proof}

\begin{thm}[\cite{SW12}]\label{thm:bar and kus are quasi-isomorphic} The composition
\begin{equation}\label{eq:bar to kus to bar is identity }\Upsilon \circ \Phi = \id,\end{equation}
is the identity on Koszul resolution.
\end{thm}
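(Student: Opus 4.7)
I would proceed by direct computation on a generating basis element $e^{i_1}\cdots e^{i_p}(a\otimes b)$ of the Koszul resolution in degree $p$, with $1\leqslant i_1<\cdots<i_p\leqslant N$ and $a,b\in A$. Applying $\Phi_p$ first produces the bar-complex chain $\sum_{\sigma\in S_p}(-1)^{|\sigma|}\,a\otimes x_{i_{\sigma(1)}}\otimes\cdots\otimes x_{i_{\sigma(p)}}\otimes b$, a finite sum indexed by permutations. I would then apply $\Upsilon_p$ termwise and argue that almost every term vanishes.

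The decisive combinatorial observation is this: for a fixed $\sigma$, each factor $a_k = x_{i_{\sigma(k)}}$ is a degree-one monomial whose exponent vector satisfies $\gamma_j^k=\delta_{j,i_{\sigma(k)}}$. By the definition of $\mathcal{S}(\bm{J})$ for an increasing tuple $\bm{J}=(j_1<\cdots<j_p)$, a nonzero contribution requires $\gamma_{j_k}^k>0$ for every $k$, i.e.\ $j_k = i_{\sigma(k)}$ for all $k$. Combined with the orderings $j_1<\cdots<j_p$ and $i_1<\cdots<i_p$, this forces $\sigma=\id$ and $\bm{J}=(i_1,\ldots,i_p)$. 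For this surviving term, the only element of $\mathcal{S}(\bm{I})$ is $\bm{s}=(0,\ldots,0)$, and the splitting formulas collapse to $a_{k,\bm{I},\bm{0}}^{1}=a_{k,\bm{I},\bm{0}}^{2}=1$ for every $k$ (all exponents other than $\gamma_{i_k}^k=1$ are zero, while $\gamma_{i_k}^k-1-s_k=0$). Substituting into \eqref{notation-Upsilon} then yields $\Upsilon_p\circ\Phi_p\bigl(e^{i_1}\cdots e^{i_p}(a\otimes b)\bigr)=(a\otimes b)\,e^{i_1}\cdots e^{i_p}$, which is exactly the identity on this generator.

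There is no deep technical obstacle here---the proof is a bookkeeping exercise organized around the observation above. The only points requiring care are (i) tracking the asymmetric role of the two halves $a_k^1$ and $a_k^2$ in the formula for $\Upsilon_p$, since they accumulate on opposite sides of the tensor, and (ii) noting the sign $(-1)^{|\id|}=+1$ for the surviving permutation. Once the support-of-$\mathcal{S}$ argument rules out every nontrivial $\sigma$, identity-preservation follows immediately from the trivial splitting.
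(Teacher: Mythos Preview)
Your proof is correct. The paper itself does not supply a proof of this theorem: it is stated with attribution to \cite{SW12} and used as input for the subsequent homotopy construction. So there is no ``paper's own proof'' to compare against here.

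Your direct computation is exactly the right approach and is essentially the argument one finds in the Shepler--Witherspoon papers. The key step---observing that $\mathcal{S}(\bm{J})$ is nonempty only when $j_k=i_{\sigma(k)}$ for all $k$, which together with the strict orderings forces $\sigma=\id$---is clean and correct. The collapse of the splitting to $a_k^1=a_k^2=1$ when $\bm{s}=\bm{0}$ and each $a_k=x_{i_k}$ is a single variable is also verified correctly from the definitions of $a_{k,\bm{I},\bm{s}}^1$ and $a_{k,\bm{I},\bm{s}}^2$. Nothing is missing.
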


This theorem implies that the other composition $\Phi\circ \Upsilon$ will be homotopic to the identity $\id$ on bar resolution. Let us describe such a homotopy $\mathsf{H}$ explicitly. It will allow us to compute various algebraic structures on Hochschild cohomology and orbifold Landau-Ginzburg models.
\begin{defn}
We define $\mathsf H_p\colon A\otimes \overline{A}^{\otimes p}\otimes A\to A\otimes \overline{A}^{\otimes (p+1)}\otimes A$ by
\begin{align}\label{eq:homotopy between resolutions}
     &\mathsf H_p(a_0\otimes a_1\otimes \cdots \otimes a_p\otimes a_{p+1})\nonumber\\
     =& \sum_{i=1}^{p+1} (-1)^{i}a_0\otimes \cdots \otimes a_{i-1}\otimes \Phi_{p-i+1}\circ\Upsilon_{p-i+1}(1\otimes a_i\otimes \cdots \otimes a_p\otimes a_{p+1}).
  \end{align}
\end{defn}
\begin{prop}\label{prop-homotopy} $\mathsf H$ gives a homotopy between $\id$ and $\Phi\circ \Upsilon$ on the bar resolution
\begin{align}
 \id-\Phi\circ \Upsilon= \bdf \circ \mathsf H+ \mathsf H\circ  \bdf.
\end{align}
\[\begin{tikzpicture}
\matrix (a) [matrix of math nodes, row sep=3.5em, column sep=2.0em, text height=1.7ex, text depth=0.9ex,font=\scriptsize]
{\cdots&A\otimes \overline{A}^{\otimes 3}\otimes A&A\otimes \overline{A}^{\otimes 2}\otimes A&A\otimes \overline{A}\otimes A&A\otimes A &0\\
\cdots&\bigoplus\limits_{1\leqslant i<j<k\leqslant N} e^i e^j e^k A^e&\bigoplus\limits_{1\leqslant i<j\leqslant N} e^i e^j A^e&\bigoplus\limits_{1\leqslant i\leqslant N} e^i A^e & A^e & 0\\
\cdots&A\otimes \overline{A}^{\otimes 3}\otimes A&A\otimes \overline{A}^{\otimes 2}\otimes A&A\otimes \overline{A}\otimes A&A\otimes A  &0,\\};
\path[latex-]
(a-2-6) edge (a-2-5);
\path[latex-]
(a-2-5) edge node[below,font=\scriptsize] {$\tilde{\bdf}$} (a-2-4);
\path[latex-]
(a-2-4) edge node[below,font=\scriptsize] {$\tilde{\bdf}$} (a-2-3);
\path[latex-]
(a-2-3) edge node[below,font=\scriptsize] {$\tilde{\bdf}$} (a-2-2);
\path[latex-]
(a-2-2) edge node[below,font=\scriptsize] {$\tilde{\bdf}$} (a-2-1);
\path[latex-]
(a-1-6) edge (a-1-5);
\path[latex-]
(a-1-5) edge node[below,font=\scriptsize] {$\bdf$} (a-1-4);
\path[latex-]
(a-1-4) edge node[below,font=\scriptsize] {$\bdf$} (a-1-3);
\path[latex-]
(a-1-3) edge node[below,font=\scriptsize] {$\bdf$} (a-1-2);
\path[latex-]
(a-1-2) edge node[below,font=\scriptsize] {$\bdf$} (a-1-1);
\path[latex-]
(a-3-6) edge (a-3-5);
\path[latex-]
(a-3-5) edge node[below,font=\scriptsize] {$\bdf$} (a-3-4);
\path[latex-]
(a-3-4) edge node[below,font=\scriptsize] {$\bdf$} (a-3-3);
\path[latex-]
(a-3-3) edge node[below,font=\scriptsize] {$\bdf$} (a-3-2);
\path[latex-]
(a-3-2) edge node[below,font=\scriptsize] {$\bdf$} (a-3-1);
\path[-]
(a-2-5.85)  edge (a-1-5.-85);
\path[-]
(a-2-5.95)  edge (a-1-5.-95);
\path[-]
(a-3-5.85)  edge (a-2-5.-85);
\path[-]
(a-3-5.95)  edge (a-2-5.-95);
\path[latex-]
(a-3-4)  edge node[left,font=\scriptsize] {$\Phi_1$} (a-2-4);
\path[latex-]
(a-2-4)  edge node[left,font=\scriptsize] {$\Upsilon_1$} (a-1-4);
\path[latex-]
(a-3-3)  edge node[left,font=\scriptsize] {$\Phi_2$} (a-2-3);
\path[latex-]
(a-2-3)  edge node[left,font=\scriptsize] {$\Upsilon_2$} (a-1-3);
\path[latex-]
(a-3-2)  edge node[left,font=\scriptsize] {$\Phi_3$} (a-2-2);
\path[latex-]
(a-2-2)  edge node[left,font=\scriptsize] {$\Upsilon_3$} (a-1-2);
\path[latex-,dashed,semithick]
(a-3-4.70)  edge node[above,outer sep= 7pt,font=\scriptsize] {$\mathsf H_0$} (a-1-5.-110);
\path[latex-,dashed,semithick]
(a-3-3.70)  edge node[above,outer sep= 7pt,font=\scriptsize] {$\mathsf H_1$} (a-1-4.-110);
\path[latex-,dashed,semithick]
(a-3-2.70)  edge node[above,outer sep= 7pt,font=\scriptsize] {$\mathsf H_2$} (a-1-3.-110);
\path[latex-,dashed,semithick]
(a-3-1.130)  edge node[above,outer sep= 7pt,font=\scriptsize] {$\mathsf H_3$} (a-1-2.-110);
\end{tikzpicture}\]
Dually,  $\mathsf H$ induces a homotopy $\mathsf H^*$ between $\id$ and $\Upsilon^*\circ \Phi^*$ on $G$-twisted cochains
\begin{align}\label{cochain-homotopy-eqn}
   \id -\Upsilon^*\circ \Phi^*=\mathsf H^*\circ \p_H+\p_H\circ \mathsf H^*   \colon C^\bullet(\overline{A}, A[G])\to C^\bullet(\overline{A},A[G]).
\end{align}
\end{prop}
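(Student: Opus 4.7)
The plan is first to reduce the cochain identity \eqref{cochain-homotopy-eqn} to the chain-level homotopy
$$
\id - \Phi\circ\Upsilon = \bdf\circ \mathsf H + \mathsf H\circ \bdf
$$
on the bar resolution $B_\bullet$. Since $\mathsf H$ is manifestly an $A^e$-bimodule map (the outer factors $a_0$ and $a_{p+1}$ carry the left- and right-$A$-actions respectively, while each $P_{p-i+1} := \Phi_{p-i+1}\circ\Upsilon_{p-i+1}$ is already $A^e$-linear), applying the functor $\hom_{A^e}(-,A[G])$ to such a chain identity and using the natural isomorphism $C^\bullet(\overline A, A[G]) \cong \hom_{A^e}(B_\bullet, A[G])$ produces \eqref{cochain-homotopy-eqn} with degrees reversed. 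So the dual statement is formal, and I focus on the chain-level identity.

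For the chain identity, set $P := \Phi\circ\Upsilon$, which is a chain endomorphism of $B_\bullet$ covering $\id_A$, with $P_0 = \id_{B_0}$ by inspection. Write the $i$-th summand of $\mathsf H_p$ as
$$
\tau_i := (-1)^{i}\, a_0\otimes\cdots\otimes a_{i-1}\otimes P_{p-i+1}\bigl(1\otimes a_i\otimes\cdots\otimes a_{p+1}\bigr), \qquad 1\leq i\leq p+1.
$$
By $A^e$-linearity, the extreme summands simplify: $\tau_1 = -P_p(a_0\otimes a_1\otimes\cdots\otimes a_{p+1})$ (left-absorbing $a_0$) and $\tau_{p+1} = (-1)^{p+1}(a_0\otimes a_1\otimes\cdots\otimes a_{p+1})$ (using $P_0 = \id$, after normalization in the reduced complex). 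These two extremes will generate the right-hand side $\id - \Phi\Upsilon$ after all intermediate cancellations.

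The third step is to compute $\bdf\tau_i$ and match it term-by-term against the summands of $\mathsf H_{p-1}\circ\bdf$. The bar differential applied to $\tau_i$ splits into three natural pieces: (a) terms where $\bdf$ acts within the prefix $a_0\otimes\cdots\otimes a_{i-1}$; (b) the ``junction'' term multiplying $a_{i-1}$ into the leading $A$-factor of $P(1\otimes a_i\otimes\cdots\otimes a_{p+1})$; and (c) terms where $\bdf$ acts within the image of $P$, which by the chain-map relation $\bdf\,P = P\,\bdf$ equal $(-1)^{i-1} a_0\otimes\cdots\otimes a_{i-1}\otimes P_{p-i}(\bdf(1\otimes a_i\otimes\cdots\otimes a_{p+1}))$. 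Decomposing $\mathsf H_{p-1}\bdf$ analogously, the (a)-pieces from $\bdf\tau_i$ pair against the prefix-differential summands of $\mathsf H_{p-1}\bdf$ (at matching $i$) and cancel; the (c)-pieces from $\bdf\tau_i$ pair against the ``inside-$P$'' summands of $\mathsf H_{p-1}\bdf$ and cancel. The junction terms (b) telescope across $i$: the junction at index $i$ cancels the one at index $i-1$ with the shifted convention, leaving only the contributions at $i=1$ and $i=p+1$, which deliver $-\Phi\Upsilon$ and $(-1)^{p+1}\cdot\id$ respectively. Collecting signs, one recovers exactly $\id - \Phi\circ\Upsilon$.

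The principal technical challenge is the sign bookkeeping. The alternating signs $(-1)^i$ intrinsic to $\mathsf H$ must be combined with the $(-1)^k$ signs of the bar differential, with the shift of the index $k$ as $\bdf$ traverses the prefix of length $i$, and with the signs absorbed by $P$ in the chain-map relation. Verifying that all these signs conspire to yield the claimed telescoping is the tedious but routine heart of the proof, and follows the template of the classical contraction-homotopy argument for comparison maps between bar and Koszul resolutions, as in Shepler--Witherspoon \cite{SW11}.
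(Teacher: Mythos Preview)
Your overall strategy coincides with the paper's: both compute $\bdf\circ\mathsf H + \mathsf H\circ\bdf$ directly, using that $P=\Phi\circ\Upsilon$ is an $A^e$-linear chain map, and then telescope. The reduction of the cochain statement to the chain statement via $\hom_{A^e}(-,A[G])$ is exactly how the paper handles the dual claim.

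Two points in your write-up are inaccurate and would need repair before the argument goes through. First, the equalities $\tau_1 = -P_p(a_0\otimes\cdots\otimes a_{p+1})$ and $\tau_{p+1} = (-1)^{p+1}(a_0\otimes\cdots\otimes a_{p+1})$ are degree errors: each $\tau_i$ lies in $B_{p+1}$, whereas $P_p(\ldots)$ and $\id(\ldots)$ lie in $B_p$. In particular, in the \emph{reduced} complex $\tau_{p+1}$ is actually zero, since $P_0(1\otimes a_{p+1})=1\otimes a_{p+1}$ contributes a $1$ in an $\overline A$-slot; the $\id$ term therefore cannot be read off from $\tau_{p+1}$ itself but arises from the boundary face $m_2$ in the course of the computation (this is the $a_0\otimes\cdots\otimes a_p\otimes m_2(P_0(1\otimes a_{p+1}))$ term the paper isolates).

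Second, the junction terms (b) do not telescope among themselves. What actually happens is that $A^e$-linearity rewrites each junction as $a_{i-1}\cdot P(1\otimes a_i\otimes\cdots) = P(a_{i-1}\otimes a_i\otimes\cdots)$, and these cancel against the corresponding terms produced by $\mathsf H_{p-1}\circ\bdf$ (equivalently, against the leading face of your (c)-pieces via $\bdf P s = P - P s\bdf$). After this cancellation only the $i=1$ junction survives, giving $-\Phi\circ\Upsilon$. Once these two points are corrected your computation is the paper's computation.
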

\begin{proof} Let $s$ be the homotopy for bar resolution,
\[\begin{tikzpicture}
\matrix (a) [matrix of math nodes, row sep=3.5em, column sep=2.0em, text height=1.7ex, text depth=0.9ex,font=\scriptsize]
{\cdots&A\otimes \overline{A}^{\otimes 3}\otimes A&A\otimes \overline{A}^{\otimes 2}\otimes A&A\otimes \overline{A}\otimes A&A\otimes A &A&0,\\};
\path[latex-]
(a-1-7) edge (a-1-6);
\path[latex-]
(a-1-6) edge node[above,font=\scriptsize] {$m_2$} (a-1-5);
\path[latex-]
(a-1-5) edge node[above,font=\scriptsize] {$\bdf$} (a-1-4);
\path[latex-]
(a-1-4) edge node[above,font=\scriptsize] {$\bdf$} (a-1-3);
\path[latex-]
(a-1-3) edge node[above,font=\scriptsize] {$\bdf$} (a-1-2);
\path[latex-]
(a-1-2) edge node[above,font=\scriptsize] {$\bdf$} (a-1-1);
\path[latex-]
(a-1-5.-8) edge node[below,font=\scriptsize] {$s$} (a-1-6.-165);
\path[latex-]
(a-1-4.-6) edge node[below,font=\scriptsize] {$s$} (a-1-5.-172);
\path[latex-]
(a-1-3.-5) edge node[below,font=\scriptsize] {$s$} (a-1-4.-175);
\path[latex-]
(a-1-2.-5) edge node[below,font=\scriptsize] {$s$} (a-1-3.-175);
\path[latex-]
(a-1-1.-15) edge node[below,font=\scriptsize] {$s$} (a-1-2.-175);
\end{tikzpicture}\]
where $
  s(a_0\otimes \cdots \otimes a_k)= 1\otimes a_0\otimes \cdots \otimes a_k.
$ It satisfies
$$
   \bdf \circ s +s \circ \bdf = \id.
$$
Using the fact that $\Phi\circ \Upsilon$ commutes with $b$, we have
\begin{align*}
  &( \bdf \circ \mathsf H+ \mathsf H\circ  \bdf)(a_0\otimes a_1\otimes \cdots\otimes a_p\otimes a_{p+1})\\
  =&-\sum_{i=1}^{p+1} a_0\otimes \cdots \otimes \left(a_{i-1} \Phi\circ\Upsilon(1\otimes a_i\otimes \cdots \otimes a_p\otimes a_{p+1})\right)\\
  &+a_0\otimes \cdots \otimes a_p\otimes m_2(\Phi\circ \Upsilon(1\otimes a_{p+1}))+\sum_{i=1}^{p} a_0\otimes \cdots \otimes \Phi\circ \Upsilon(a_i\otimes \cdots\otimes a_{p+1})\\
  =&(\id-\Phi\circ\Upsilon)(a_0\otimes a_1\otimes \cdots\otimes a_p\otimes a_{p+1}).
\end{align*}
The last statement follows from the fact that $\mathsf H$ is an ${A}$-bimodule homomorphism.
\end{proof}

Therefore we have a homotopy retraction between two cochains
\[\begin{tikzpicture}
\matrix (a) [matrix of math nodes, row sep=3.5em, column sep=4em, text height=2ex, text depth=0.9ex]
{\big(C^{\bullet}(\overline{A},A[G]),\p_H\big) & \big(K^{\bullet}(A,A[G]),\p_K\big).\\};
\path[-latex]
(a-1-1.2) edge node[above,font=\scriptsize] {$\Phi^*$} (a-1-2.178);
\path[latex-]
(a-1-1.-2) edge node[below,font=\scriptsize] {$\Upsilon^*$} (a-1-2.-178);
\draw[-latex]
(a-1-1.-170)  arc (-40:-320:11pt);
\node[left,outer sep=20pt,font=\scriptsize] at (a-1-1.-180) {$\mathsf{H}^*$};
\end{tikzpicture}\]

\begin{lem}\label{lem: special homotopy}
The homotopy $\mathsf{H}^*$ satisfies
\begin{equation}\label{eq: special homotopy retraction}
\mathsf{H}^* \circ \Upsilon^* = 0,\quad \Phi^* \circ \mathsf{H}^* = 0, \quad\text{and } \quad\mathsf{H}^* \circ \mathsf{H}^* = 0.
\end{equation}
In other words, the triple $(\Upsilon^*,\Phi^*,\mathsf{H}^*)$ is a special homotopy retraction (see for example \cite{C04}). 
\end{lem}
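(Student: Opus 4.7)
The plan is to dualize: via $\hom_{A^e}(-,A[G])$, the three side conditions translate at the chain level to $\Upsilon \circ \mathsf{H} = 0$, $\mathsf{H} \circ \Phi = 0$ and $\mathsf{H} \circ \mathsf{H} = 0$, and it suffices to check these on the $A^e$-generators $1\otimes a_1\otimes \cdots \otimes a_p\otimes 1$ of the reduced bar resolution (and on $e^{j_1}\cdots e^{j_p}\otimes 1\otimes 1$ for the composite with $\Phi$).

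Everything will hinge on two structural observations about the idempotent $\Xi_q \coloneqq \Phi_q \circ \Upsilon_q$, both read off directly from \eqref{definition-Phi} and \eqref{notation-Upsilon}: (a) on a pure-variable input $1\otimes x_{k_1}\otimes \cdots \otimes x_{k_q}\otimes d$, the choices in $\Upsilon_q$ are uniquely forced to be $j_l = k_l$, $s_l = 0$, so every splitting $a_l^1, a_l^2$ equals $1$ and the leftmost $A$-factor of $\Xi_q$'s output is therefore $1$; (b) on an arbitrary monomial input $1\otimes a_1 \otimes \cdots \otimes a_q \otimes 1$, the left-piece $a_l^1 = x_{j_l}^{\gamma_{j_l}^l-1-s_l}\prod_{m>j_l}x_m^{\gamma_m^l}$ involves only variables $x_m$ with $m\geq j_l$, so the combined leftmost $A$-factor $a_1^1\cdots a_q^1$ of any $\Xi_q$-image lies in the subring generated by variables of index $\geq j_1$.

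With (a) and (b) in hand I would argue as follows. For $\mathsf{H}\circ \Phi = 0$: $\Phi$-images are pure-variable tensors, so each inserted $\Xi_{p-i+1}$ in $\mathsf{H}_p\circ \Phi_p$ has leftmost $A$-factor $=1$ by (a), and this $1$ lands in the $i$-th middle slot of the output tensor, which vanishes in $\overline{A} = A/\C$. For $\Upsilon\circ \mathsf{H} = 0$: the $i$-th summand of $\mathsf{H}_p(1\otimes a_1\otimes \cdots \otimes a_p\otimes 1)$ has middle factors $(a_1,\ldots,a_{i-1},c,x_{j_{\sigma(1)}},\ldots,x_{j_{\sigma(q)}})$ with $c = a_i^1\cdots a_p^1$ and $q = p-i+1$; applying $\Upsilon_{p+1}$, the last $q$ slots pin the last $q$ peeled indices to $(j_{\sigma(1)},\ldots,j_{\sigma(q)})$, the strict-increase condition forces $\sigma = \id$, and then the index peeled at the $c$-slot would need to be strictly less than $j_1$, contradicting (b). For $\mathsf{H}\circ \mathsf{H} = 0$: splitting the second $\mathsf{H}$ at position $i'$, the case $1\leq i'\leq i$ places the $c$-slot inside the second $\Xi$'s input and triggers the same (b)-contradiction, while the case $i'\geq i+1$ places only pure-variable middle factors (plus the rightmost $A$) inside the second $\Xi$, whose leftmost $A$-factor is $1$ by (a), killing the term by the $\overline{A}$-reduction.

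I expect the main obstacle to be bookkeeping rather than conceptual: managing the nested $(\bm{J},\bm{s})$-choices, the $(-1)^{|\sigma|}$ signs, and the precise positions at which inserted $\Xi$-outputs are glued into the surrounding tensor. The conceptual engine is the pure-variable/general-monomial dichotomy (a)/(b), after which each of the three identities reduces to a short combinatorial check of the strict-increase constraints on $\Upsilon$'s peeling.
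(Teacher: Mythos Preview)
Your proposal is correct and follows essentially the same approach as the paper: both dualize to the chain-level identities $\Upsilon\circ\mathsf H=0$, $\mathsf H\circ\Phi=0$, $\mathsf H\circ\mathsf H=0$ and verify them using exactly the two structural facts you call (a) and (b)---that $\Phi\circ\Upsilon$ produces a leftmost $1$ on pure-variable inputs, and that the leftmost factor $a_1^1\cdots a_q^1$ uses only variables of index $\ge j_1$. Your write-up is in fact more explicit than the paper's, which handles $\mathsf H\circ\mathsf H$ with a terse ``similarly''; your case split $i'\le i$ versus $i'\ge i+1$ (invoking (b) and (a) respectively) is precisely what that ``similarly'' is hiding.
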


\begin{proof} We have
\begin{align*}
& \Upsilon_{p+1} \circ \mathsf{H}_p(1 \otimes a_1 \otimes \cdots \otimes a_p \otimes 1)\\
=& \sum_{i=1}^{p+1} (-1)^{i} \Upsilon_{p+1}(1\otimes \cdots \otimes a_{i-1}\otimes \Phi_{p-i+1}\circ\Upsilon_{p-i+1}(1\otimes a_i\otimes \cdots \otimes a_p\otimes 1))\\
=&0,
\end{align*}
where the last equality follows from  \eqref{definition-Phi} and the ordering \eqref{notation-Upsilon}.

\begin{align*}
& \mathsf{H}\circ \Phi (1\otimes 1 e^{i_1}e^{i_2}\cdots e^{i_p})\\
=& \sum_{\sigma\in S_p}\sum_{i=1}^{p+1} (-1)^{i+\abs{\sigma}} 1\otimes \cdots \otimes x_{\sigma(i-1)}\otimes \Phi_{p-i+1}\circ\Upsilon_{p-i+1}(1\otimes x_{\sigma(i)}\otimes \cdots \otimes x_{\sigma(p)}\otimes 1),
\end{align*}
which is zero as a reduced chain since  $\Phi_{p-i+1}\circ\Upsilon_{p-i+1}(1\otimes x_{\sigma(i)}\otimes \cdots \otimes x_{\sigma(p)}\otimes 1)$ contributes $1$ at some middle position. Similarly,
\begin{align*}
& \mathsf{H} \circ \mathsf{H}(1 \otimes a_1 \otimes \cdots \otimes a_p \otimes 1)\\
=& \sum_{i=1}^{p+1} (-1)^{i} \mathsf{H}(1\otimes \cdots \otimes a_{i-1}\otimes \Phi_{p-i+1}\circ\Upsilon_{p-i+1}(1\otimes a_i\otimes \cdots \otimes a_p\otimes 1))
\end{align*}
is zero as a reduced chain.
\end{proof}

We will be interested in studying the $(\p_H+ \df_W)$-cohomology in terms of Koszul complex.  Viewing $\df_W$ as a small perturbation, standard homological perturbation lemma (together with Lemma \ref{lem: Upsilon are cochain maps} and Lemma \ref{lem: special homotopy}) allows us to construct a new homotopy retract $({\Upsilon}^*,\tilde{\Phi}^*,\tilde{\mathsf{H}}^*)$

\[\begin{tikzpicture}
\matrix (a) [matrix of math nodes, row sep=3.5em, column sep=4em, text height=2ex, text depth=0.9ex]
{\big(C^{\bullet}(\overline{A},A[G]),\p_H+\df_W\big) & \big(K^{\bullet}(A,A[G]),\p_K + \tilde{\df}_W\big).\\};
\path[-latex]
(a-1-1.2) edge node[above,font=\scriptsize] {$\tilde{\Phi}^*$} (a-1-2.178.5);
\path[latex-]
(a-1-1.-2) edge node[below,font=\scriptsize] {${\Upsilon}^*$} (a-1-2.-178.5);
\draw[-latex]
(a-1-1.-170)  arc (-40:-320:11pt);
\node[left,outer sep=20pt,font=\scriptsize] at (a-1-1.-180) {$\tilde{\mathsf{H}}^*$};
\end{tikzpicture}\]
Here the perturbed homotopy $\tilde{\mathsf{H}}^*$ and retract $\tilde{\Phi}^*$ are defined by
\begin{equation}\label{def: perturbed operator}
\tilde{\Phi}^* = \Phi^* - \Phi^*  (\id + \df_W \mathsf{H}^*)^{-1}\df_W \mathsf{H}^*, \quad \tilde{\mathsf{H}}^* = \mathsf{H}^* (\id + \df_W \mathsf{H}^*)^{-1},
\end{equation}
where $(\id + \df_W \mathsf{H}^*)^{-1}=\sum_{m\geq 0}(-1)^m (\df_W\mathsf{H}^*)^m$. It can be checked directly that
\begin{align}\label{perturbed-homotopy}
\begin{cases}
\tilde \Phi^*\circ \ \Upsilon^*=\id\\
 \id - \Upsilon^*\circ \tilde \Phi^*=\tilde{\mathsf{H}}^*\circ (\p_H+\df_W)+(\p_H+\df_W)\circ \tilde{\mathsf{H}}^*
 \end{cases}
\end{align}

We will need the following property to compute the cup product $\cup$.

\begin{lem}\label{lem:homotopy splitting form}
For any $\phi\in C^{p}(\overline{A},A[G])$ and $\psi\in C^{q}(\overline{A},A[G])$, we have
\begin{equation}\label{eq:homotopy splitting form}
\mathsf H^*(\phi\cup\psi) = (-1)^p\phi\cup \mathsf H^*(\psi)+\mathsf H^*\big(\phi\cup(\Upsilon^*\circ\Phi^*(\psi))\big).
\end{equation}
\end{lem}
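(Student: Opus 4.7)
The plan is to unfold both sides on a fixed input $a_1\otimes\cdots\otimes a_{p+q-1}$ using the explicit formula \eqref{eq:homotopy between resolutions} for $\mathsf{H}$, which gives
\begin{equation*}
\mathsf{H}^*(\phi\cup\psi)(a_1,\ldots,a_{p+q-1}) = \sum_{i=1}^{p+q}(-1)^{i}\,\widetilde{\phi\cup\psi}\bigl(1\otimes a_1\otimes \cdots\otimes a_{i-1}\otimes \Phi\Upsilon(1\otimes a_i\otimes\cdots\otimes a_{p+q-1}\otimes 1)\bigr),
\end{equation*}
and to split this sum depending on whether $i>p$ (the insertion lies in the $\psi$-block of the cup product) or $i\le p$ (it straddles the $\phi$/$\psi$ boundary). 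By linearity we may assume $\phi\in C^p(\overline A,Ag_1)$ and $\psi\in C^q(\overline A,Ag_2)$.

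In the range $i>p$, the first $p$ middle entries are the original $a_1,\ldots,a_p$, so $\phi(a_1,\ldots,a_p)$ factors out and the remaining sum (after reindexing $i'=i-p$) becomes the defining formula of $\mathsf{H}^*(g_1^*\psi)$ evaluated on $(a_{p+1},\ldots,a_{p+q-1})$. Since $\Phi$ and $\Upsilon$ are built from polynomial operations and are therefore $G$-equivariant, so is $\mathsf{H}^*$; in particular $\mathsf{H}^*(g_1^*\psi)=g_1^*\mathsf{H}^*(\psi)$. Assembling the signs produces exactly $(-1)^p\,\phi\cup\mathsf{H}^*(\psi)$.

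In the range $i\le p$, expand
\begin{equation*}
\Phi\Upsilon(1\otimes a_i\otimes\cdots\otimes a_{p+q-1}\otimes 1)=\sum_{\bm j,\bm s,\sigma}(-1)^{\sigma}\,L_{\bm j,\bm s}\otimes x_{j_{\sigma(1)}}\otimes\cdots\otimes x_{j_{\sigma(p+q-i)}}\otimes R_{\bm j,\bm s}
\end{equation*}
with $j_1<\cdots<j_{p+q-i}$ and $\sigma\in S_{p+q-i}$. Crucially, the $q$ entries handed to $\psi$ are then single variables. Decompose $\sigma$ into a shuffle $\mu=(S_\phi,S_\psi)$ of $\{1,\ldots,p+q-i\}$ with $|S_\phi|=p-i$, together with permutations $\tau_1\in S_{p-i}$ and $\tau_2\in S_q$ within the two halves, so that $(-1)^\sigma=(-1)^\mu(-1)^{\tau_1}(-1)^{\tau_2}$. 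The inner sum over $\tau_2$ is the antisymmetrization of $g_1^*\psi$ over the sorted tuple indexed by $S_\psi$, which by \eqref{eq:cochain map upsilon star} together with \eqref{eq: q-diff operator} equals $\Upsilon^*\Phi^*(g_1^*\psi)=g_1^*\Upsilon^*\Phi^*\psi$ on that tuple. Performing the identical computation with $\psi$ replaced by $\Upsilon^*\Phi^*\psi$ produces the very same expression, because $\Upsilon^*\Phi^*\psi$ vanishes on non-sorted variable inputs and reproduces precisely the antisymmetrization of $\psi$ on sorted ones, so the $\tau_2$-sum collapses to its $\tau_2=\id$ term. Hence the $i\le p$ contribution to $\mathsf{H}^*(\phi\cup\psi)$ matches the $i\le p$ contribution to $\mathsf{H}^*(\phi\cup\Upsilon^*\Phi^*\psi)$.

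To close the argument, the $i>p$ piece of $\mathsf{H}^*(\phi\cup\Upsilon^*\Phi^*\psi)$ is, by the analysis of paragraph two applied to $\Upsilon^*\Phi^*\psi$ in place of $\psi$, equal to $(-1)^p\phi\cup\mathsf{H}^*(\Upsilon^*\Phi^*\psi)$; this vanishes because $\Upsilon^*\Phi^*\psi=\Upsilon^*(\Phi^*\psi)$ and $\mathsf{H}^*\circ\Upsilon^*=0$ by Lemma~\ref{lem: special homotopy}. Combining the two ranges yields the identity. The main technical hurdle will be the shuffle bookkeeping of paragraph three, where one must carefully track the sign from $\sigma=\mu\cdot(\tau_1,\tau_2)$, the $G$-twisting built into the cup product via $g_1^*\psi$, and the $G$-equivariance of $\Upsilon^*\Phi^*$; the vanishing of $\Upsilon^*\Phi^*\psi$ on non-sorted variable inputs, encoded in \eqref{eq: q-diff operator}, is precisely what allows one to swap $\psi$ for $\Upsilon^*\Phi^*\psi$ inside the cup product modulo the correction $(-1)^p\phi\cup\mathsf H^*\psi$.
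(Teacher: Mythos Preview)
Your argument is correct and follows essentially the same route as the paper's proof: both split the homotopy sum at $i=p$, identify the $i>p$ block with $(-1)^p\phi\cup\mathsf H^*(\psi)$, and match the $i\le p$ block against the corresponding piece of $\mathsf H^*\bigl(\phi\cup\Upsilon^*\Phi^*\psi\bigr)$ using that on pure-variable inputs $\Upsilon^*\Phi^*\psi$ returns the antisymmetrization of $\psi$ and vanishes otherwise. Your shuffle decomposition $\sigma=\mu\cdot(\tau_1,\tau_2)$ makes explicit what the paper sketches in words, and your appeal to $\mathsf H^*\circ\Upsilon^*=0$ from Lemma~\ref{lem: special homotopy} replaces the paper's direct ``no new increasing splitting'' observation for the vanishing of the $i>p$ part of the second term; these are equivalent. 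One small remark: the detour through $g_1^*\psi$ and $G$-equivariance of $\mathsf H^*$ in your second paragraph is unnecessary, since the cup product here is simply multiplication in $A[G]$, so $\phi(a_1,\dots,a_p)$ factors out on the left in $A[G]$ without any further equivariance argument.
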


\begin{proof} Let us introduce the notation $ (\cdots]$ for the input such that
$$
  \phi(a_0\otimes \cdots \otimes a_p]\coloneqq\phi(a_0\otimes \cdots \otimes a_{p-1}) a_p, \quad \phi \in C^p(A,A[G]).
$$

According to (\ref{eq:homotopy between resolutions}),
\begin{align*}
&(-1)^p\phi\cup \mathsf H^*(\psi)(a_1\otimes\cdots\otimes a_p\otimes a_{p+1}\otimes\cdots\otimes a_{p+q-1})\nonumber\\
=& \sum_{i=1}^{q-1}(-1)^{p+i}\phi(a_1\otimes \cdots\otimes a_p)\psi\big(a_{p+1}\otimes\cdots \otimes a_{p+i-1}\otimes
\Phi \!\circ\!\Upsilon (1\otimes a_{p+i}\otimes\cdots\otimes a_{p+q-1}\otimes 1)\big]\nonumber\\
=& \sum_{i=1}^{q-1}(-1)^{p+i}(\phi\cup\psi)(a_1\otimes \cdots\otimes a_{p+i-1}\otimes \Phi_{q-i}\!\circ\!\Upsilon_{q-i}(1\otimes
a_{p+i}\otimes\cdots \otimes a_{p+q-1}\otimes 1)],
\end{align*}
and
\begin{align*}
&\mathsf H^*\big(\phi \cup (\Upsilon^* \circ \Phi^*(\psi))\big)(a_1\otimes\cdots\otimes a_p\otimes a_{p+1}\otimes\cdots\otimes a_{p+q-1})\nonumber\\
=&\sum_{i=1}^{p}(-1)^{i}(\phi \cup (\Upsilon^*\circ\Phi^*(\psi))\big(a_1\otimes \cdots\otimes a_{i-1}\otimes
\Phi_{p+q-i}\!\circ\!\Upsilon_{p+q-i}(1\otimes a_{i}\otimes\cdots\otimes a_{p+1}\otimes\cdots \otimes a_{p+q-1}\otimes 1)\big]\nonumber\\
&+\sum_{i=1}^{q-1}(-1)^{p+i}\phi(a_1\otimes\cdots\otimes a_p)\Upsilon^*\circ\Phi^*(\psi)\big(a_{p+1}\otimes \cdots\otimes a_{p+i-1}\otimes
\Phi_{q-i}\!\circ\!\Upsilon_{q-i}(1\otimes a_{p+i}\otimes\cdots\otimes a_{p+q-1}\otimes 1)\big]\nonumber\\
=& \sum_{i=1}^{p}(-1)^{i}(\phi\cup\psi)\big(a_1\otimes \cdots\otimes a_{i-1}\otimes \Phi_{p+q-i}\circ\Upsilon_{p+q-i}(1\otimes
a_{i}\otimes\cdots\otimes a_{p+1}\otimes\cdots)\big).
\end{align*}
Here the last equality holds because the second summand vanishes (recall the definition of $\Upsilon$, we pick an increasing splitting for
terms in a sequence and put all the latter halves into the first term to make a new sequence, in which we cannot find an new increasing
splitting), and $\Upsilon^*\circ\Phi^*$ will do nothing in the first summand because they will acts on a sequence of terms like
$x_{i_{\sigma(1)}}\otimes x_{i_{\sigma(2)}}\cdots $ with $i_1<i_2<\cdots$.

Sum them up and we will get $\mathsf H^*(\phi\cup\psi)(a_1\otimes\cdots\otimes a_{p+q-1})$.
\end{proof}

\begin{ex}Consider the case $N=2$ and write $x=x_1,y=x_2$.  For $\phi_k\in C^k(\overline{A},A[G])$, we have
\begin{align*}
&\mathsf H_0^*(\phi_1) = 0,\\
&\mathsf H_1^*(\phi_2)(x^ay^b) =  -\sum_{0\leqslant s <a}\phi_2(x^sy^b\otimes x) x^{a-s-1}-\sum_{0\leqslant t<b}\phi_2(y^t\otimes y)x^ay^{b-t-1},\\
&\mathsf H_2^*(\phi_3)(x^{a_1}y^{b_1}\otimes x^{a_2}y^{b_2}) \nonumber\\
=&  -\!\!\sum_{\mbox{\tiny$\begin{array}{cc}0\!\leqslant \!s\!<\!a_1\\0\!\leqslant\! t\!<\!b_2\end{array}$}}\phi_3(x^sy^{b_1+t}\otimes(x\otimes y-y\otimes x))x^{a_1+a_2-s-1}y^{b_2-t-1}\nonumber\\
&+\sum_{0\leqslant s<a_2}\phi_3(x^{a_1}y^{b_1}\otimes x^{s}y^{b_2}\otimes x)x^{a_2-s-1}\nonumber\\
&+\sum_{0\leqslant t<b_2}\phi_3(x^{a_1}y^{b_1}\otimes y^t\otimes y)x^{a_2}y^{b_2-t-1}.
\end{align*}
\end{ex}

\boldmath
\subsection{$G$-twisted cohomology}
\unboldmath

Let us now compute the cohomology
\[\hch_c(A_W,A_W[G])\]
which is the state space of orbifold Landau-Ginzburg model.

\begin{defn} Let $g\in G$, $\fix(g)$ be the fixed locus of $g$. We let
\begin{align}
 \mathsf{PV}^\bullet(\fix(g))=A_g\otimes \wedge^\bullet T \fix(g)
 \end{align}
denote algebraic polyvector fields on $\fix(g)$. Here $T \fix(g)$ are algebraic vector fields on $\fix(g)$.
\end{defn}

As in the proof of Theorem \ref{thm: comparison of curved Hochschild}, there is a spectral sequence converging to $\mathsf{HH}_c(A_W, A_W[G])$ whose $E_1$ page is $\mathsf{HH}_c(A, A[G])$. This can be computed by the Koszul resolution
$$
\mathsf{HH}^\bullet_c(A, A[G])\simeq  \mathsf{H}(K^\bullet({A},A[G]), \p_K).
$$
Using \eqref{eqn: koszul differential}, we find (see also \cite{NPPT06} and \cite{HT10})

\begin{lem}\label{lem:polyvector field and hochschild coh}
There is an natural isomorphism between graded vector spaces:
\[\begin{array}{crcl}
\Theta_g\colon& \mathsf{PV}^\bullet(\fix(g))[N-N_g]&\rightarrow& \mathsf{HH}^\bullet_c(A, Ag)\simeq  \mathsf{H}(K^\bullet(\overline{A},Ag), \p_K)\\
&a \p_{j_1}\wedge\p_{j_2}\wedge\cdots \wedge\p_{j_p}&\mapsto& a e_{j_1}e_{j_2}\cdots e_{j_p}e_{\bm{I}_g} g.
\end{array}\]
Here $\p_{i} = \p/\p x_i$ of degree $\deg\p_i =1$. $\bm{I}_g=\{i_1<i_2<\cdots< i_{N-N_g}\}$  is the moving index of $g$ and $e_{\bm{I}_g}\coloneqq e_{i_1}\cdots e_{i_{N-N_g}}$.
\end{lem}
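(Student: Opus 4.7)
The plan is to compute $\mathsf{H}\bigl(K^\bullet(A, Ag), \p_K\bigr)$ directly by exploiting the diagonality of the $g$-action. Writing $\leftidx{^g}x_i = \lambda_i x_i$, we have $\lambda_i = 1$ precisely when $i \notin \bm{I}_g$, so the Koszul differential \eqref{eqn: koszul differential} reduces to multiplication by $\sum_{i \in \bm{I}_g}(1-\lambda_i)\,x_i\,e_i$, touching only the moving variables and their corresponding $e_i$'s. This observation is what makes an explicit computation feasible.

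First, I would factor the algebra as $A = A_g \otimes_\C B$ with $B = \C[x_i : i \in \bm{I}_g]$, which induces a bigraded tensor decomposition
\[
  K^\bullet(A, Ag) \;\cong\; \bigl(A_g[e_i : i \notin \bm{I}_g]\bigr)\,\otimes_\C\,\bigl(B[e_i : i \in \bm{I}_g]\bigr)\cdot g.
\]
Under this splitting $\p_K$ vanishes on the first tensor factor and acts on the second factor as the standard Koszul differential of the sequence $\{(1-\lambda_i)x_i\}_{i \in \bm{I}_g}$. K\"unneth then reduces the total cohomology to the tensor product of the cohomologies of the two factors.

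Next I treat each factor separately. The first factor has zero differential and is naturally identified with $\mathsf{PV}^\bullet(\fix(g))$ via $e_i \leftrightarrow \p_i$ for $i \notin \bm{I}_g$. For the second factor, the sequence $\{(1-\lambda_i)x_i\}_{i \in \bm{I}_g}$ is regular in $B$ (the scalars $1-\lambda_i$ are nonzero and $\{x_i\}_{i \in \bm{I}_g}$ is a regular sequence in the polynomial ring $B$). By the classical Koszul cohomology computation for a regular sequence, the cochain cohomology is therefore concentrated in top degree $|\bm{I}_g| = N - N_g$, where it equals $B/(x_i)_{i \in \bm{I}_g}\cdot e_{\bm{I}_g} = \C\cdot e_{\bm{I}_g}$.

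Assembling these via K\"unneth yields $\mathsf{PV}^\bullet(\fix(g))\otimes \C\cdot e_{\bm{I}_g}\cdot g$, with the $e_{\bm{I}_g}$ factor accounting for the degree shift by $N - N_g$, and tracking the identifications recovers precisely the map $\Theta_g$ of the statement. The only step that is not essentially automatic is verifying that the tensor factorization of $K^\bullet$ is genuinely compatible with $\p_K$, which is immediate from the diagonality of the action; the remaining inputs (regular-sequence Koszul cohomology plus K\"unneth) are classical, so I expect no real obstacle beyond careful bookkeeping of signs and the overall degree shift.
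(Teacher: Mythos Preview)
Your proposal is correct and is essentially the computation the paper has in mind: the paper does not give a written proof beyond pointing to the Koszul differential \eqref{eqn: koszul differential} and citing \cite{NPPT06,HT10}, and your argument is exactly the natural unpacking of that reference. The key observation---that $\p_K$ is multiplication by $\sum_{i\in\bm{I}_g}(1-\lambda_i)x_ie_i$ and hence decouples fixed from moving variables---is the content of ``Using \eqref{eqn: koszul differential}, we find'', and the rest is the standard regular-sequence cohomology you describe.
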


Our assumption on $W$ implies that $W_g$ has an isolated singularity at the origin (see \cite{FJR13} for details).  
The complex in the $E_2$-page is precisely the Koszul resolution for the critical locus of $W_g$. Therefore the spectral sequence converges at $E_2$-page whose cohomology in the $g$-sector is $\jac(W_g)[N-N_g]$.

In conclusion, we have (see also {Theorem 4.2.} and {Theorem 6.3.} in \cite{CT13})

\begin{thm}\label{thm: sectors equals Jacobi rings}
$\Theta_g$ in Lemma \ref{lem:polyvector field and hochschild coh} induces a natural isomorphism between  $\Z/2\Z$-graded vector spaces
\begin{equation}\Theta_g\colon  \jac(W_g)[\overline{N}-\overline{N_g}] \cong \hch_c(A_W,A_Wg) .\end{equation}
\end{thm}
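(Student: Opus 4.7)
The plan is to compute $\hch_c(A_W, A_Wg)$ directly on the Koszul side and identify the result with the Jacobian ring. By Proposition \ref{prop-homotopy} combined with Lemma \ref{lem: Upsilon are cochain maps} and the homological perturbation formulas (\ref{def: perturbed operator}), the triple $({\Upsilon}^*, \tilde{\Phi}^*, \tilde{\mathsf{H}}^*)$ is a deformation retract, so that $\hch_c(A_W, A_W g) \cong \mathsf{H}(K^\bullet(A, Ag), \p_K + \tilde{\df}_W)$. The problem thereby reduces to computing the cohomology of a mixed Koszul complex on a single twisted sector.

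To do this, I would mimic the spectral-sequence argument in Theorem \ref{thm: comparison of curved Hochschild}: introduce a formal degree-$2$ variable $u$, replace the differential by $\p_K + u\tilde{\df}_W$, and filter by powers of $u$. Because we work with the compact-type complex, the filtration is exhaustive and the resulting spectral sequence converges. The $E_1$-page is the $\p_K$-cohomology. From \eqref{eqn: koszul differential}, $\p_K$ acts on $K^\bullet(A,Ag)$ as wedging with $\sum_i (1-\lambda_i)x_i e_i$; the complex factors into a ``moving'' Koszul complex over indices $i\in\bm{I}_g$, for which each $(1-\lambda_i)x_i$ is a nonzerodivisor in $A$, and a ``fixed'' complex over $i\notin\bm{I}_g$ where $\p_K$ vanishes. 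A standard Koszul computation gives $\mathsf{H}(K^\bullet(A,Ag),\p_K)\cong A_g\otimes \wedge^\bullet(e_i: i\notin \bm{I}_g)$, located in $e$-degrees shifted by $N-N_g$. Via $\Theta_g$ this is precisely $\mathsf{PV}^\bullet(\fix(g))[N-N_g]$, recovering Lemma \ref{lem:polyvector field and hochschild coh}.

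The next step is to descend $\tilde{\df}_W$ to obtain $d_1$. Applied to a representative $a e_{j_1}\cdots e_{j_p} e_{\bm{I}_g}g$ with $a\in A_g$ and $j_k\notin \bm{I}_g$, formula \eqref{eq:horizontal diff on kus} produces two types of terms: (i) for each fixed-index $e_{j_k}$, a term $\pm a\,\p_{x_{j_k}}^gW\cdot e_{j_1}\cdots\widehat{e}_{j_k}\cdots e_{j_p}e_{\bm{I}_g}g$, which in $E_1$ becomes $\pm a\,\p_{x_{j_k}}W_g$ times the obvious polyvector factor, using \eqref{eqn-quantum-number} at $\lambda=1$ to identify $\p_{x_{j_k}}^g$ with the ordinary partial and then projecting to $A_g$; (ii) for each moving-index $e_i$, $i\in\bm{I}_g$, a term with $e_{\bm{I}_g\setminus\{i\}}$, which lies in a strictly lower degree of the moving Koszul complex whose cohomology vanishes there by regularity. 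Hence only the fixed-type terms contribute, and $d_1$ is identified under $\Theta_g$ with the Koszul differential on $\mathsf{PV}^\bullet(\fix(g))$ contracting against $dW_g=\sum_{j\notin\bm{I}_g}\p_{x_j}W_g\,dx_j$.

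The assumption that $W$ has an isolated critical point at the origin forces $W_g$ to have the same property (see \cite{FJR13}), so $(\p_{x_j}W_g)_{j\notin \bm{I}_g}$ is a regular sequence in $A_g$ of length $N_g$. The Koszul cohomology is thus concentrated in a single polyvector degree and equals $\jac(W_g)$, which concentrates $E_2$ in one degree and forces degeneration at $E_2$ for degree reasons. Tracking the polyvector shift $[N-N_g]$ against the top placement of $\jac(W_g)$ in $\mathsf{PV}^\bullet(\fix(g))$ yields the stated $\Z/2\Z$-shift $[\overline{N}-\overline{N_g}]$, and the naturality of the construction completes the identification. The principal technical point is verifying (ii)—that all moving-direction contributions of $\tilde{\df}_W$ are exact on $E_1$—together with handling signs; once this is nailed down, the rest is standard spectral-sequence bookkeeping.
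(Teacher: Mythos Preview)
Your proposal is correct and follows essentially the same approach as the paper: both set up a spectral sequence whose $E_1$-page is the $\p_K$-cohomology $\mathsf{PV}^\bullet(\fix(g))[N-N_g]$ and whose $d_1$ is contraction with $dW_g$, then use the isolated-singularity hypothesis on $W_g$ to conclude degeneration at $E_2=\jac(W_g)$. You supply more detail than the paper (notably the verification that the moving-direction terms of $\tilde{\df}_W$ land in an acyclic range of the moving Koszul complex and hence vanish on $E_1$), and you pass to the Koszul complex first via the perturbed retract rather than running the spectral sequence on the Hochschild side, but the content is the same.
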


\begin{ex}[Fermat type]\label{eg-fermat-type} $A=\C[x_1], W = x_1^n$. Let $g\in G_W$ and $g\neq e$. Then
$$
\mathsf{HH}_c(A_W,A_Wg)=\C e_1g 
$$
is one-dimensional and has odd parity.
\end{ex}

\begin{ex}[Loop type]\label{eg-loop-type} $A=\C[x_1,\cdots, x_N], W={x_{1}}^{n_1}x_2+{x_2}^{n_2}x_3+\cdots + {x_N}^{n_N}x_1$.  Let $g\in G_W$ and $g\neq e$. Assume $g(x_i)=\lambda_i x_i$. Then $\lambda_i\neq 1$ for any $i$. Therefore
$$
\mathsf{HH}_c(A_W,A_Wg)=\C e_1e_2\cdots e_Ng 
$$
is one-dimensional and has the same parity as $N$.
\end{ex}

\begin{ex}[Chain type]\label{eg-chain-type} $A=\C[x_1, \cdots x_N], W= {x_{1}}^{n_1}x_2+{x_2}^{n_2}x_3+\cdots + {x_N}^{n_N}$.  Let $g\in G_W$ and $g\neq e$. Assume $g(x_i)=\lambda_i x_i$. There exists $l_g$ such that $\lambda_i\neq 1$ for $i\leq l_g$ and $\lambda_i=1$ for $i>l_g$. Hence
$$
\mathsf{HH}_c(A_W,A_Wg)=\jac\left(x_{l_g+1}^{n_{l_g+1}}x_{l_g+2}+\cdots+ x_{N-1}^{n_{N-1}}x_N+x_N^{n_N}\right) e_1\cdots e_{l_g}g, 
$$
whose parity is the same as $l_g$.
\end{ex}

\boldmath
\subsection{${G}$-Frobenius algebraic structure}\label{section-G-Frobenius}
\unboldmath

\begin{defn}[\cite{K03,K06}]\label{def: G-Frob algebra}
A \emph{$G$-Frobenius algebra} on $\C$ consists of $(G,H,\cup,\eta, \rho,\mathfrak{1}_{\e},\chi)$, where
\begin{enumerate}
\item $G$ is a finite group;
\item $H$ is a finite dimensional $\Z$ (or $\Z/2\Z$)-graded vector space with a sector decomposition
\begin{align}
H = \bigoplus_{g \in G}H_g;
\end{align}
  \item $\chi \in \hom(G,\C^*)$ is a character of $G$,
  \item $\rho \colon G \rightarrow \mathsf{Aut}(H)$ defines a $G$-action on $H$ satisfying
  \begin{description}
    \item[$\bm{\rmnum{1}}$)] $\rho(g)\colon H_h\to H_{ghg^{-1}}$;
        \item[$\bm{\rmnum{2}}$)] $\rho(g)\colon H_g\to H_g$ is the scalar multiplication by $\chi(g)^{-1}$;
  \end{description}
  \item $\eta$ is a non-degenerate bilinear form on $H$ satisfying
  \begin{description}
    \item[$\bm{\rmnum{1}}$)] for $\ha_g\in H_g,\ha_h \in H_h$, $\eta(\ha_g,\ha_h)=0$ unless $gh = \e$,
    \item[$\bm{\rmnum{2}}$)] for $g \in G$ and $\ha,\hb \in H$,
    \begin{equation}\label{eq: (c)2}
    \eta(\rho(g)\ha,\rho(g)\hb) = \chi(g)^{-2}\eta(\ha,\hb);
    \end{equation}
  \end{description}
  \item $(H,\cup,\mathfrak{1}_{\e})$ is an associative algebra with the identity $\mathfrak{1}_{\e}$ satisfying
  \begin{description}
    \item[$\bm{\rmnum{1}}$)] $\cup$ is compatible with the sector decomposition,
    $$
       \cup\colon H_g \times H_h\to H_{gh},
    $$
    \item[$\bm{\rmnum{2}}$)] $\cup$ is $G$-equivariant and $\mathfrak{1}_{\e}$ is $G$-invariant,
    \item[$\bm{\rmnum{3}}$)] $\cup$ is twisted commutative,
    \begin{equation} \label{eq: (d)3}
    \ha_g \cup \rho(g^{-1}) \ha_h = (-1)^{\abs{\ha_g}\abs{\ha_h}} \ha_h \cup \ha_g, \quad \forall \ha_g\in H_g,\ha_h \in H_h,
    \end{equation}
    \item[$\bm{\rmnum{4}}$)] $\cup$ is compatible with $\eta$,
    \begin{equation} \label{eq: (d)4}
    \eta(\ha\cup\hb,\gamma) = \eta(\ha,\hb \cup \gamma), \quad \forall \ha,\hb,\gamma\in H,
    \end{equation}

    \item[$\bm{\rmnum{5}}$)] the following projective trace axiom holds,
    \begin{equation}\label{eq: (d)5}
    \chi(h)\mathsf{Tr}_s(\mathsf{L}_{\ha}\rho(h)|_{H_g}) = \chi(g)^{-1}\mathsf{Tr}_s(\rho(g^{-1})\mathsf{L}_{\ha}|_{H_h}), \quad \forall g,h \in G, \ha\in H_{ghg^{-1}h^{-1}},
    \end{equation}
    where $\mathsf{L}_{\ha}$ is an operator on $H$ given by
    \begin{equation}
    \mathsf{L}_{\ha}(\hb) \coloneqq \ha \cup \hb,
    \end{equation}
    and $\mathsf{Tr}_s$ denotes the trace on $\Z$-graded vector spaces and denotes the super trace on $\Z/2\Z$-graded vector spaces.
  \end{description}
\end{enumerate}
\end{defn}

\begin{defn}\label{def: special G-Frob algebra}
A $G$-Frobenius algebra $H$ is called special if
\begin{itemize}
\item there exists generator $\mathfrak{1}_g \in H_g$ for any $g\in G$ such that $H_g=H_\e \cup \mathfrak{1}_g$,
\item there exists $\rho_{g,h}\in \C^*$ for any $g,h\in G$ such that,
\begin{equation}\label{eq:group action 2-cocycle}\rho(g)(\mathfrak{1}_h) = \rho_{g,h}\mathfrak{1}_{ghg^{-1}}.
\end{equation}
\end{itemize}
\end{defn}

In this subsection we show that the state space of orbifold Landau-Ginzburg model
\[\mathsf{HH}_c(A_W,A_W[G])\]
carries a natural structure of special $G$-Frobenius algebra. This will be constructed in several steps.

Firstly, we have the sector decomposition
$$
\mathsf{HH}_c(A_W,A_W[G])=\bigoplus_{g\in G} H_g, \quad \text{where}\quad H_g= \mathsf{HH}_c(A_W, A_W g),
$$
together with compatible $G$-action and twisted commutative cup product $\cup$ by Theorem \ref{thm: dga on G-twisted Hochschild}.

Define a character $\chi\colon G\rightarrow \C^*$ of $G$ by
\begin{equation}\label{eq: character}
\chi(g) = \det(g)= \lambda_1^g\lambda_2^g\cdots \lambda^g_N,\quad \text{if}\quad \leftidx{^g}x_i=\lambda_i^g x_i.
\end{equation}

\begin{lem}\label{lem: g acts on g-sector by scaling}
For any $g \in G$ and $\ha_g \in H_g$, we have
\begin{equation}\label{eq: (b)2'}
g^*(\ha_g) = \chi(g)^{-1} \ha_g.
\end{equation}
\end{lem}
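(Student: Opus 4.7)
The plan is to pick a Koszul representative of $\ha_g$ furnished by Theorem \ref{thm: sectors equals Jacobi rings}, transport it back to the Hochschild complex through $\Upsilon^*$, and then read off the scalar $\chi(g)^{-1}$ directly from the defining formula \eqref{eq: group action on twisted cochains}. By Lemma \ref{lem:polyvector field and hochschild coh}, every class in $H_g$ admits a Koszul representative of the form
\[
\psi \;=\; a\cdot e_{j_1}\cdots e_{j_p}\, e_{\bm{I}_g}\, g,
\]
with $a \in A_g$ and each index $j_k$ lying in the complement of $\bm{I}_g$; in particular $\leftidx{^g}a = a$ and $\lambda_{j_k}^g = 1$.

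First I would identify how the Hochschild $G$-action \eqref{eq: group action on twisted cochains} is rendered on the Koszul side. Under $\Phi^*$ the element $e_i$ corresponds to the Hochschild $1$-cochain $\phi_i(x_j)=\delta_{ij}\,\e \in A[G]$, and a direct substitution gives
\[
g^*(\phi_i)(x_j) \;=\; g\cdot \phi_i\bigl((\lambda_j^g)^{-1}x_j\bigr)\cdot g^{-1} \;=\; (\lambda_i^g)^{-1}\,\phi_i(x_j).
\]
Granting the $G$-equivariance of $\Upsilon^*$ on cohomology, this pins down the induced $G$-action on Koszul cochains as the multiplicative one with $g\cdot e_i = (\lambda_i^g)^{-1}e_i$, acting by $\leftidx{^g}(-)$ on polynomials and by conjugation on group elements. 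Applied to $\psi$, the factor $a$ is fixed, each $e_{j_k}$ is fixed ($\lambda_{j_k}^g = 1$), conjugation returns $g\cdot g\cdot g^{-1}=g$, and $e_{\bm{I}_g}$ contributes the scalar
\[
\prod_{i \in \bm{I}_g}(\lambda_i^g)^{-1} \;=\; \prod_{i=1}^N (\lambda_i^g)^{-1} \;=\; \det(g)^{-1} \;=\; \chi(g)^{-1},
\]
where the first equality uses $\lambda_i^g = 1$ for $i \notin \bm{I}_g$ and the last uses \eqref{eq: character}. Multiplying out yields $g^*(\ha_g) = \chi(g)^{-1}\ha_g$.

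The one step that requires genuine justification is the $G$-equivariance of $\Upsilon^*$ on cohomology. At the chain level the Shepler--Witherspoon maps need not be strictly $G$-equivariant, but both the bar and the Koszul resolutions are canonical $G$-equivariant free resolutions of $A$, so any two $G$-equivariant comparison maps between them are $G$-equivariantly chain homotopic, and the ambiguity is absorbed into a choice of homotopy. This is enough to transport the $G$-action through the quasi-isomorphism and convert the above Koszul weight count into the desired identity in $H_g$; the remainder of the argument is bookkeeping of $\mathbb{C}^\times$-weights.
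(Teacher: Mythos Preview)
Your proof is correct and follows the same approach as the paper, whose entire argument is the one-line instruction to check explicitly using Lemma~\ref{lem:polyvector field and hochschild coh}. Your weight computation on the Koszul generators $e_i$ and the resulting scalar $\prod_{i\in\bm{I}_g}(\lambda_i^g)^{-1}=\chi(g)^{-1}$ is exactly the intended verification, and your added discussion of why the comparison with the Koszul model is $G$-equivariant on cohomology is a legitimate point that the paper leaves implicit.
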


\begin{proof} This can be checked explicitly using Lemma \ref{lem:polyvector field and hochschild coh}.
\end{proof}

Let us now construct the generator $\mathfrak{1}_g \in H_g$ for each $g \in G$. Define
\begin{align}\label{eq: generator}
\mathfrak{1}_g= \Theta_g(1)\in H_g
\end{align}
where $\Theta_g$ is defined in Theorem \ref{thm: sectors equals Jacobi rings} and $1$ is the identity in $ \jac(W_g)$. The parity of $\mathfrak{1}_g$ is the same as $N-N_g$. It is easy to see that $\mathfrak{1}_{\e}$ is the unit of $\mathsf{HH}_c(A_W,A_W[G])$. For $g,h \in G$, define
\begin{equation}\label{eq: group action co-cycle definition}
\rho_{g,h} = \prod_{i\in \bm{I}_h} \big(\lambda^g_i\big)^{-1},
\end{equation}
where $\bm{I}_h$ is the moving index of $h$ and $\lambda^g_i$ is defined by $\leftidx{^g}x_i = \lambda^g_ix_i$. It is checked that
\begin{equation}\label{eq:group action 2-cocycle'}
g^*(\mathfrak{1}_h) = \rho_{g,h} \mathfrak{1}_h = \rho_{g,h} \mathfrak{1}_{ghg^{-1}}.
\end{equation}
Let
\begin{align}
\Pi_g \colon \jac(W) \rightarrow \jac(W_g)
\end{align}
denote the natural restriction map. This is well-defined since $W$ is $g$-invariant.

\begin{lem}\label{lem-generator-1g}
For any $f \in \jac(W)$
\begin{equation}\label{eq: cup with untwisted sector}
[\Theta_{\e}(f)] \cup \mathfrak{1}_g = [\Theta_g\big(\Pi_g(f)\big)].
\end{equation}
In particular, $H_g$ is a cyclic $H_{\e}$-module generated by $\mathfrak{1}_g$.
\end{lem}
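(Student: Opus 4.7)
My plan is to transport both sides to the Koszul complex via the perturbed homotopy retract $(\Upsilon^*,\tilde{\Phi}^*,\tilde{\mathsf{H}}^*)$ of \eqref{def: perturbed operator}, where cochains have explicit polynomial form. Applying $\Upsilon^*$ and \eqref{eq:cochain map upsilon star} sends $\Theta_\e(f)$ to the $0$-cochain $f$ and $\mathfrak{1}_g$ to
\[
\beta(a_1\otimes\cdots\otimes a_{N-N_g}) = \rho_{i_1}(g)(\p^g_{x_{i_1}}a_1)\cdots\rho_{i_{N-N_g}}(g)(\p^g_{x_{i_{N-N_g}}}a_{N-N_g})\,g,
\]
where $\bm{I}_g=\{i_1<\cdots<i_{N-N_g}\}$. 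By the twisted brace definition of $\cup$, $(f\cup\beta)(a_1,\ldots) = \pm f\cdot\beta^\circ(a_1,\ldots)\,g$, whereas $\Upsilon^*(\Theta_g(\Pi_g(f)))(a_1,\ldots) = \pm\beta^\circ(a_1,\ldots)\cdot\Pi_g(f)\,g$; using commutativity of $A$, the difference reduces to $\pm\beta^\circ(a_1,\ldots)\cdot(f-\Pi_g(f))\,g$.

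Since $f-\Pi_g(f)$ lies in the ideal $(x_m:m\in\bm{I}_g)$, write $f-\Pi_g(f)=\sum_{m\in\bm{I}_g}x_m h_m$ and define
\[
\eta = \sum_{m\in\bm{I}_g}\tfrac{(-1)^{j(m)-1}}{1-\lambda^g_m}\,h_m\,e_{\bm{I}_g\setminus m}\,g\in K^{N-N_g-1}(A,Ag),
\]
with $j(m)$ denoting the position of $m$ in $\bm{I}_g$. A direct computation with \eqref{eqn: koszul differential}---noting that $(x_i-\leftidx{^g}x_i)=0$ for $i\notin\bm{I}_g$ while $e_i\,e_{\bm{I}_g\setminus m}=0$ for $i\in\bm{I}_g\setminus\{m\}$---gives $\p_K(\eta) = (f-\Pi_g(f))\,e_{\bm{I}_g}\,g$. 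Transporting back via $\Upsilon^*$, which is a chain map for both $(\p_H,\p_K)$ by construction and for $(\df_W,\tilde{\df}_W)$ by Lemma \ref{lem: Upsilon are cochain maps}, yields $\p_H\Upsilon^*(\eta) = \Upsilon^*(f)\cup\Upsilon^*(\mathfrak{1}_g)-\Upsilon^*(\Theta_g(\Pi_g(f)))$, so the claim holds at the level of $\p_H$-cocycles.

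To upgrade this identity to the total cohomology $\hch_c(A_W,A_Wg)$, I would absorb the residual $\df_W\Upsilon^*(\eta)=\Upsilon^*(\tilde{\df}_W\eta)\in C^{N-N_g-2}$ by an iterative perturbation argument. The inputs are: (i) the vanishing $H^q(K^\bullet(A,Ag),\p_K)=0$ for $q<N-N_g$ (from Lemma \ref{lem:polyvector field and hochschild coh}); (ii) the $g$-invariance of $W$, which forces $\rho_l(g)(\p^g_{x_l}W)\in(x_m:m\in\bm{I}_g)$ for each $l\in\bm{I}_g$ by monomial analysis (any monomial of $W$ with $\gamma_l\geqslant 1$, $l\in\bm{I}_g$, must either have $\gamma_l\geqslant 2$ or carry another moving exponent in order to satisfy $\prod_i\lambda_i^{\gamma_i}=1$); and (iii) the spectral-sequence degeneration $E_2=\jac(W_g)=\hch_c(A_W,A_Wg)$ of Theorem \ref{thm: sectors equals Jacobi rings}. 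Combining these, one inductively constructs $\eta_k\in K^{N-N_g-1-2k}$ whose telescoping sum under $\Upsilon^*$ provides the desired $(\p_H+\df_W)$-coboundary, with termination guaranteed by the lower boundedness of the Koszul complex. The main obstacle is the careful sign bookkeeping and verification that each successive residual is $\p_K$-closed so that the low-degree $\p_K$-exactness can be invoked; this is essentially the homological perturbation lemma made explicit at the cochain level.
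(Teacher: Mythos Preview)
Your approach---transporting to the Koszul side via $\Upsilon^*$ and using commutativity of $A$---is exactly the paper's, and your computations are correct. But you are working much harder than necessary. The paper's proof is one line: it checks the \emph{exact} cochain identity
\[
\Upsilon^*(\Theta_\e(f))\cup\Upsilon^*(\Theta_g(1))=\Upsilon^*(f\,e_{\bm I_g}g),
\]
which holds because cupping with a $0$-cochain is just left multiplication by $f\in A$, and by \eqref{eq:cochain map upsilon star} this coincides with $\Upsilon^*$ of $f\,e_{\bm I_g}g$ (commutativity of $A$). There is no difference term $(f-\Pi_g(f))\beta^\circ$ to absorb: the point is that $f\,e_{\bm I_g}g$ and $\Pi_g(f)\,e_{\bm I_g}g$ already represent the \emph{same} class in $\hch_c(A_W,A_Wg)$ by Theorem~\ref{thm: sectors equals Jacobi rings}, since the $\p_K$-cohomology in top degree is $A/(x_m-\leftidx{^g}x_m:m\in\bm I_g)\cong A_g$ and the spectral sequence degenerates at $E_2$.

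More concretely, to avoid ever handling a non-cocycle: take the full $(\p_K+\tilde\df_W)$-cocycle $\kappa^g=e_{\bm I_g}g+\kappa^g_{-2}+\cdots$ representing $\mathfrak{1}_g$ (it exists by Theorem~\ref{thm: sectors equals Jacobi rings}). Since both $\p_K$ and $\tilde\df_W$ are $A$-linear, $f\kappa^g$ is again a cocycle, and the same one-line computation gives $f\cup\Upsilon^*(\kappa^g)=\Upsilon^*(f\kappa^g)$. Applying $\mathsf p$ to the top piece $f\,e_{\bm I_g}g$ yields $\Pi_g(f)\in\jac(W_g)$, which is the claim. Your inductive construction of $\eta_k$'s is correct in principle but simply re-proves, by hand and in this special case, the homological perturbation already packaged in \eqref{perturbed-homotopy} and Theorem~\ref{thm: sectors equals Jacobi rings}.
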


\begin{proof} Identity (\ref{eq: cup with untwisted sector}) follows by checking
\begin{equation*}
\Upsilon^*(\Theta_{\e}(f)) \cup \Upsilon^*(\Theta_{g}(1)) = \Upsilon^*(\Theta_g(f)).
\end{equation*}
The last statement follows from the surjectivity of $\Pi_g$.
\end{proof}

We now construct a bilinear form $\eta$ on $\mathsf{HH}_c(A_W,A_W[G])$. On the identity sector
$$
H_{\e}\cong \jac(W),
$$
the pairing $\eta_{\e}$ is defined by the residue
\begin{equation}
\eta_{\e} ([\Theta_{\e}(f_1)],[\Theta_{\e}(f_2)]) = \res_{\C^N}
\begin{bmatrix} f_1f_2\df x_1\wedge \df x_2 \wedge \cdots \wedge \df x_N\\ \pd{W}{x_1}\pd{W}{x_2}\cdots \pd{W}{x_N}
\end{bmatrix}.
\end{equation}
Since $W$ has an isolated singularity at the origin, the residue pairing on $\jac(W)$ is non-degenerate.  We extend $\eta_{\e}$ onto twisted sectors by  defining
\begin{align}
\eta_g \colon  H_g \otimes H_{g^{-1}} & \rightarrow  \C,\nonumber\\
(\alpha, \beta) & \mapsto \eta_{\e}(\alpha\cup \beta, \mathfrak{1}_{\e}).
\end{align}
Then the bilinear form $\eta$ is defined by
\begin{equation}\label{eq: non-deg bilinear form}
\eta = \sum_{g\in G} \eta_{g}.
\end{equation}

\begin{lem}
 The  bilinear form $\eta$ on $\mathsf{HH}_c(A_W,A_W[G])$ is compatible with the cup product
\begin{equation} \label{eq: (d)4'}
    \eta(\ha\cup\hb,\gamma) = \eta(\ha,\hb \cup \gamma), \quad\forall \ha,\hb,\gamma\in H
\end{equation}
and satisfies the $G$-equivariance condition
\begin{equation}\label{eq: (c)2'}
    \eta(g^*(\ha),g^*(\hb)) = \chi(g)^{-2}\eta(\ha,\hb), \quad \forall \ha, \hb\in H .
\end{equation}
\end{lem}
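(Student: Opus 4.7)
The plan is to reduce both statements to the residue pairing $\eta_\e$ on the identity sector together with properties of the cup product already established. The bilinear form $\eta$ vanishes on $H_g \otimes H_h$ unless $gh = \e$, and $\cup$ respects the sector grading, so in each identity we may restrict attention to the unique sector configuration that does not automatically vanish.

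For \eqref{eq: (d)4'}, suppose $\alpha \in H_{g_1}$, $\beta \in H_{g_2}$, $\gamma \in H_{g_3}$ with $g_1 g_2 g_3 = \e$. Unfolding the definition of $\eta$,
\[
\eta(\alpha \cup \beta,\gamma) = \eta_\e((\alpha \cup \beta) \cup \gamma,\, \mathfrak{1}_\e), \qquad \eta(\alpha,\beta \cup \gamma) = \eta_\e(\alpha \cup (\beta \cup \gamma),\, \mathfrak{1}_\e),
\]
and these agree by associativity of $\cup$ on $\mathsf{HH}_c(A_W, A_W[G])$, which is part of Theorem \ref{thm: dga on G-twisted Hochschild}.

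For \eqref{eq: (c)2'}, since $g^*$ sends $H_h$ to $H_{ghg^{-1}}$, both sides vanish unless $\alpha \in H_h$, $\beta \in H_{h^{-1}}$. The $G$-equivariance of the twisted brace structures (Lemma \ref{lem: twisted brace structures on twisted cochain is G-equivariant}) specializes to $G$-equivariance of the cup product, so $g^*\alpha \cup g^*\beta = g^*(\alpha \cup \beta) \in H_\e$. Since $g^*\mathfrak{1}_\e = \mathfrak{1}_\e$, the identity reduces to the identity-sector estimate
\[
\eta_\e(g^*\sigma,\, g^*\tau) = \chi(g)^{-2}\, \eta_\e(\sigma,\tau), \qquad \sigma,\tau \in H_\e.
\]
This is the only computational step, and the main technical point. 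Writing $\sigma = [\Theta_\e(f_1)]$, $\tau = [\Theta_\e(f_2)]$ and $\leftidx{^g}x_i = \lambda_i x_i$, I would perform the change of variables $y_i = \lambda_i x_i$ in the Grothendieck residue. The volume form contributes $dx_1 \wedge \cdots \wedge dx_N = \chi(g)^{-1}\, dy_1 \wedge \cdots \wedge dy_N$, while differentiating the invariance relation $W(\lambda x) = W(x)$ gives $(\partial_i W)(\lambda^{-1} y) = \lambda_i (\partial_i W)(y)$, so the denominator produces an additional factor $\chi(g)$. Multiplying these scalings yields the desired factor $\chi(g)^{-2}$. The remaining reductions are formal from the algebraic structures already in place; the sole obstacle is this Jacobian bookkeeping for the residue pairing under the torus action.
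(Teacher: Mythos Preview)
Your proof is correct and follows the same approach as the paper: both reduce \eqref{eq: (d)4'} to associativity of $\cup$ and reduce \eqref{eq: (c)2'} to the $G$-equivariance of $\cup$ together with the behavior of the residue pairing on $H_\e$ under the torus action. The paper's proof simply invokes ``the $G$-invariance of $W$ and the property of residue'' without spelling out the change-of-variables computation, whereas you carry out that Jacobian bookkeeping explicitly; your added detail is sound and the conclusion $\chi(g)^{-2}$ is correct.
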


\begin{proof} By construction,
$$
\eta(\ha\cup\hb,\gamma) = \eta(\ha,\hb \cup \gamma)= \eta(\ha\cup\hb\cup \gamma, 1_{\e}).
$$
Equation (\ref{eq: (d)4'}) follows. Equation \eqref{eq: (c)2'} follows from the $G$-equivariance of $\cup$, the $G$-invariance of $W$ and the property of residue.
\end{proof}
The discussion above is summarized as follows.
\begin{thm}\label{thm: invertible LG gives a G-Frob algebra.}
Let $W$ be a non-degenerate invertible polynomial. Then the state space $\mathsf{HH}_c(A_W,A_W[G])$ of orbifold Landau-Ginzburg model together with the $G$-action \eqref{eq: group action on twisted cochains}, cup product $\cup$, bilinear form $\eta$ \eqref{eq: non-deg bilinear form}, character $\chi$ \eqref{eq: character} and generators $\mathfrak{1}_g$ \eqref{eq: generator} form a special $\Z/2\Z$-graded $G$-Frobenius algebra.
\end{thm}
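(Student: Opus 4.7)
The plan is to verify, one by one, each axiom of Definition \ref{def: G-Frob algebra} together with the special conditions of Definition \ref{def: special G-Frob algebra}. Most axioms have already been established in the preceding subsections: the sector decomposition is Theorem \ref{thm: sectors equals Jacobi rings} (with finite-dimensionality of each sector because $W_g$ has an isolated critical point at the origin); axiom 4(i) follows from \eqref{eq: group action on twisted cochains}, and 4(ii) is Lemma \ref{lem: g acts on g-sector by scaling}; axiom 5(i) holds because $\eta_g$ is supported on $H_g \otimes H_{g^{-1}}$ by construction \eqref{eq: non-deg bilinear form}, and 5(ii) is \eqref{eq: (c)2'}; axiom 6(i) is immediate from the twisted cup product definition, 6(ii) follows from Lemma \ref{lem: twisted brace structures on twisted cochain is G-equivariant} combined with $G$-invariance of $m_2$, 6(iii) is Theorem \ref{thm: dga on G-twisted Hochschild}, and 6(iv) is \eqref{eq: (d)4'}; finally the special conditions reduce to Lemma \ref{lem-generator-1g} and \eqref{eq:group action 2-cocycle'}. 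The proof would therefore begin by listing these references and then focus on the two genuinely non-trivial points: non-degeneracy of $\eta$ and the projective trace axiom 6(v).

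For non-degeneracy it suffices to show that each $\eta_g \colon H_g \otimes H_{g^{-1}} \to \C$ is a perfect pairing. Using the cyclicity of $H_g$ over $H_\e$ (Lemma \ref{lem-generator-1g}), write $\alpha = [\Theta_\e(f)] \cup \mathfrak{1}_g$ and $\beta = [\Theta_\e(h)] \cup \mathfrak{1}_{g^{-1}}$; then associativity of $\cup$ together with \eqref{eq: cup with untwisted sector} gives
\[
\eta_g(\alpha,\beta) \;=\; \eta_\e\bigl([\Theta_\e(fh)] \cup (\mathfrak{1}_g \cup \mathfrak{1}_{g^{-1}}),\, \mathfrak{1}_\e\bigr).
\]
Computing $\mathfrak{1}_g \cup \mathfrak{1}_{g^{-1}}$ via the explicit Koszul homotopy retract of Section \ref{section-Koszul} yields (up to a non-zero constant) a multiple of $\hess^g(W)\, \mathfrak{1}_\e$. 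Since $\Pi_g$ restricts the residue pairing on $\jac(W)$ to the residue pairing on $\jac(W_g) \cong H_g$, which is non-degenerate because $W_g$ has an isolated singularity, non-degeneracy of $\eta_g$ follows.

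The principal obstacle is the projective trace axiom: for $\alpha \in H_{ghg^{-1}h^{-1}}$,
\[
\chi(h)\, \mathsf{Tr}_s\bigl(\mathsf{L}_\alpha \rho(h)\vert_{H_g}\bigr) \;=\; \chi(g)^{-1}\, \mathsf{Tr}_s\bigl(\rho(g^{-1}) \mathsf{L}_\alpha\vert_{H_h}\bigr).
\]
Both $\mathsf{L}_\alpha \rho(h)$ and $\rho(g^{-1}) \mathsf{L}_\alpha$ are honest endomorphisms, of $H_g$ and of $H_h$ respectively, as one checks by tracking the sector bookkeeping. The strategy is to transport everything into the Koszul model $H_g \cong \jac(W_g)[\bar N - \bar N_g]$, in which $\rho(h)$ acts diagonally on monomial basis vectors via the cocycle \eqref{eq: group action co-cycle definition} combined with the character \eqref{eq: character}, while $\mathsf{L}_\alpha$ becomes multiplication by a Hessian-weighted element under the cup product formula of Theorem \ref{thm: cup product formula for General invertible polynomial}. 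Each trace then collapses to a finite sum of residue integrals over $\fix(g) \cap \fix(h)$, and the axiom reduces to a combinatorial identity expressing the symmetry of these residues under the swap $g \leftrightarrow h$, weighted by the scalar factors $\chi(g), \chi(h), \rho_{g,h}$. The verification is carried out separately in the three elementary cases of the Kreuzer--Skarke classification \cite{KS92} (Fermat, Loop, Chain), in each of which the twisted Hessian and the $\rho_{g,h}$-coefficients admit closed forms that manifestly satisfy the required symmetry; the general invertible case then follows by multiplicativity under direct sum decomposition of $W$.
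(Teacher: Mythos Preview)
Your checklist of references for axioms (1)--(6)(iv) and the special conditions matches the paper's logic exactly, and your treatment of non-degeneracy is close to the paper's: both reduce to computing $\mathfrak{1}_g\cup\mathfrak{1}_{g^{-1}}$ via the cup product formula and relating $\eta_g$ to the residue pairing on $\jac(W_g)$. The paper is slightly more explicit here than you are, splitting into cases: for Fermat and loop type the $g$-sector is one-dimensional so non-degeneracy is just $\eta(\mathfrak{1}_g,\mathfrak{1}_{g^{-1}})\neq 0$, while for chain type one checks directly that $\eta_g$ is proportional to the residue pairing on $\jac(W_g)$. Your sentence ``$\Pi_g$ restricts the residue pairing on $\jac(W)$ to the residue pairing on $\jac(W_g)$'' compresses this and would need the explicit Hessian formula to be made precise.

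The one substantive divergence is the projective trace axiom \eqref{eq: (d)5}. The paper does not verify it at all in this setting: it simply cites Kaufmann \cite{K03}, who established by direct calculation that the trace axiom holds automatically once the remaining data of a special $G$-Frobenius algebra are in place (in the abelian case $ghg^{-1}h^{-1}=\e$, so $\alpha\in H_\e$, and the identity reduces to a symmetry of weighted dimensions that Kaufmann checks in general). Your proposed route---transporting to the Koszul model, expressing both super-traces as residue sums over $\fix(g)\cap\fix(h)$, and then verifying a combinatorial symmetry case-by-case for Fermat, loop, and chain---is in principle workable and more self-contained, but it is a good deal more labour than invoking the existing reference, and you have not indicated how the twisted Hessian factors and the cocycles $\rho_{g,h}$ actually conspire to give the required symmetry. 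If you want a self-contained proof this is the right shape, but be aware that the paper sidesteps the whole computation.
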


\begin{proof} The proof of \eqref{eq: (d)5} is via direct calculation by Kaufmann in \cite{K03}. 
Hence, We only need to check the non-degeneracy of  $\eta$ on $g$-sectors where $g\neq \e$ for elementary invertible polynomials. If $W$ is of Fermat type or loop type,  $\mathsf{HH}_c(A_W,A_Wg)$ is one-dimensional (Example \ref{eg-fermat-type} and Example \ref{eg-loop-type}) and $\eta(\mathfrak{1}_g, \mathfrak{1}_{g^{-1}})\neq 0$ by Theorem \ref{thm: cup product formula for General invertible polynomial}. If $W$ is of chain type, it can be checked directly that $\eta$ on $\mathsf{HH}_c(A_W,A_Wg)$ is proportional to the residue pairing on $\jac(W_g)$ using Example \ref{eg-chain-type}, Lemma \ref{lem-generator-1g} and Theorem \ref{thm: cup product formula for General invertible polynomial}. 
\end{proof}

\begin{rem}
We expect that the pairing $\eta$ on $\mathsf{HH}_c(A_W,A_Wg)$ and $\mathsf{HH}_c(A_W,A_Wg^{-1})$ is the same as the residue pairing in $\jac(W_g)$ as long as $W_g$ has an isolated singularity. Alternatively, there is categorical construction of pairing (Mukai pairing) from the dg-category of matrix factorization\cite{CW10, S08, S16}. The Mukai pairing of G-equivariant case is explicitly computed in \cite{PV12}. If one uses the trivial volume form to identity Hochschild homology with cohomology, then our pairing here coincides with their result (see Theorem 4.2.1 in \cite{PV12} and Theorem \ref{thm: cup product formula for General invertible polynomial} below). The non-degeneracy of Mukai pairing is proved by Shklyarov in \cite{S08} for all homological smooth dg-algebra.
\end{rem}

\subsection{Quantum differential operator and cup product}

In this section we establish an explicit formula for cup product of $G$-twisted Hochschild cohomology. This is achieved via the following established quasi-isomorphisms (formula \eqref{perturbed-homotopy} and Theorem \ref{thm: sectors equals Jacobi rings})
\[\begin{tikzpicture}
\matrix (a) [matrix of math nodes, row sep=3.5em, column sep=4em, text height=2ex, text depth=0.9ex]
{\big(C^{\bullet}(\overline{A},Ag),\p_H+\df_W\big) & \big(K^{\bullet}(A,Ag),\p_K+\tilde{\df}_W\big)& (\jac(W_g)[\overline{N}-\overline{N_g}],0).\\};
\path[-latex]
(a-1-1) edge node[above,font=\scriptsize] {$\tilde{\Phi}^*$} (a-1-2);
\path[latex-]
(a-1-1.-3) edge node[below,font=\scriptsize] {$\Upsilon^*$} (a-1-2.-177);
\path[-latex]
(a-1-2) edge node[above,font=\scriptsize] {$\mathsf{p}$} (a-1-3);
\path[latex-]
(a-1-2.-3) edge node[below,font=\scriptsize] {$\mathsf{i}$} (a-1-3.-177);
\end{tikzpicture}\]
Here $p$ is the natural projection onto the appropriate components of polyvectors.

Given $\alpha_g\in \jac(W_g)[\overline{N}-\overline{N_g}], \beta_h\in \jac(W_h)[\overline{N}-\overline{N_h}]$, their cup product in the $gh$-sector can be computed by
\begin{align}\label{cup-product-formula}
  \mathsf{p}\tilde{\Phi}^*\left( (\Upsilon^*\mathsf{i} (\alpha_g))\cup (\Upsilon^*\mathsf{i} (\beta_h)) \right).
\end{align}

For invertible polynomials, it turns out that $\cup$ on cohomology is determined by the cup product between $g$-sector and $g^{-1}$-sector. In this case $(\Upsilon^*\mathsf{i} (\alpha_g))\cup (\Upsilon^*\mathsf{i} (\beta_h))$ is a cocycle in the identity $\e$-sector. It is easy to see that formula \eqref{cup-product-formula} is reduced to the following
\begin{lem} For a $(\p_H\!+\!\df_W)$-closed element in the identity $\e$-sector
\[\phi=\sum_{k=0}^l\phi_{2k}, \quad \phi_{2k}\in C^{2k}(\overline{A},A\e),\]
we have
\begin{equation}\label{formula-homotopy-cup-product}
\mathsf{p} \circ \tilde \Phi^*({\phi}) = \Big[\sum_{k=0}^l (-1)^k(\df_W\mathsf{H}^*)^k \phi_{2k}\Big].
\end{equation}
Here $[~~ ]$ on the right hand side represents its class in $\jac(W)$.
\end{lem}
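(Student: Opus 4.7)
The plan is to reduce $\mathsf{p}\circ\tilde\Phi^*(\phi)$ to a telescoping sum and then apply a straightforward degree count. Starting from \eqref{def: perturbed operator}, the elementary identity $(\id+X)^{-1}X=\id-(\id+X)^{-1}$ with $X=\df_W\mathsf{H}^*$ collapses
\[
\tilde\Phi^* \;=\; \Phi^* - \Phi^*(\id+\df_W\mathsf{H}^*)^{-1}\df_W\mathsf{H}^* \;=\; \Phi^*\circ(\id+\df_W\mathsf{H}^*)^{-1} \;=\; \Phi^*\circ\sum_{m\geq 0}(-1)^m(\df_W\mathsf{H}^*)^m.
\]
Applying this to $\phi=\sum_{k=0}^l\phi_{2k}$ expresses $\tilde\Phi^*(\phi)$ as a double sum over $k$ and $m$, on which $\mathsf{p}$ will then act.

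Second, I would track degrees. The construction \eqref{eq:homotopy between resolutions} shows that $\mathsf{H}_p$ raises the bar-resolution degree by one, so its dual $\mathsf{H}^*$ lowers cochain degree by one; the curving $\df_W$ also lowers cochain degree by one. Consequently $(\df_W\mathsf{H}^*)^m\phi_{2k}\in C^{2k-2m}(\overline{A},A\e)$, vanishing whenever $m>k$, and $\Phi^*$ sends it into $K^{2k-2m}(A,A\e)$.

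Third, I would identify $\mathsf{p}$ cochain-wise. Under $\Theta_\e$ of Lemma \ref{lem:polyvector field and hochschild coh}, the image of $\jac(W)$ sits in the Koszul degree-zero component $K^0(A,A\e)=A$, and $\mathsf{p}$ is realized by first extracting the $K^0$-component and then passing to $\jac(W)=A/(\partial_{x_i}W)$. This is well-defined on cohomology because \eqref{eq:horizontal diff on kus} shows that $\tilde\df_W\colon K^1(A,A\e)\to K^0(A,A\e)$ takes values in the Jacobian ideal, while $\p_K$ contributes nothing at Koszul degree zero. Hence only the terms with $m=k$ survive the projection, and since $\Phi_0^*\colon C^0(\overline{A},A\e)\to K^0(A,A\e)$ is the identity map on $A$, we arrive at
\[
\mathsf{p}\circ\tilde\Phi^*(\phi) \;=\; \Bigl[\sum_{k=0}^l(-1)^k(\df_W\mathsf{H}^*)^k\phi_{2k}\Bigr].
\]

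No serious obstacle is anticipated. The series $\sum_{m\geq 0}(-1)^m(\df_W\mathsf{H}^*)^m\phi_{2k}$ truncates automatically at $m=k$ by degree, so convergence is not an issue, and the cochain-level description of $\mathsf{p}$ is essentially built into the identification of Theorem \ref{thm: sectors equals Jacobi rings}. The one bookkeeping point worth double-checking is that the $(\p_H+\df_W)$-closedness of $\phi$ is used only to guarantee that the resulting element of $A$ descends to a well-defined class in $\jac(W)$, which is automatic from the two observations about $\p_K$ and $\tilde\df_W$ above.
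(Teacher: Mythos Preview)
Your argument is correct and is precisely the computation the paper leaves implicit (the lemma is stated without proof). The key steps---rewriting $\tilde\Phi^*=\Phi^*\circ\sum_{m\ge0}(-1)^m(\df_W\mathsf H^*)^m$, the degree count showing $(\df_W\mathsf H^*)^m\phi_{2k}\in C^{2k-2m}$, and the observation that for $g=\e$ the projection $\mathsf p$ reads off the $K^0$-component modulo the Jacobian ideal---are exactly the intended unpacking of \eqref{def: perturbed operator} combined with Theorem~\ref{thm: sectors equals Jacobi rings}.

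One minor clarification on your last paragraph: the $(\p_H+\df_W)$-closedness of $\phi$ is really what makes $\tilde\Phi^*(\phi)$ a $(\p_K+\tilde\df_W)$-cocycle, so that $\mathsf p$ (which is a priori defined on cohomology) applies and the bracket $[\ \ ]$ genuinely computes the class of $\phi$ rather than just an arbitrary element of $\jac(W)$; your degree-zero remark is the concrete manifestation of this, but it is worth saying that closedness is doing slightly more than ensuring well-definedness of a quotient.
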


We first establish some properties of the cup product $\cup$ and the homotopy operator $\mathsf H^*$.

\begin{defn}
Given $g=(q_1,\cdots, q_N)\in (\C^*)^N$, we define the decomposition
$$
g= \prod_{i=1}^Ng^{(i)},
$$
where $g^{(i)}=(1,\cdots, q_i,\cdots, 1)$ is called the $i$'th component of $g$. We  define $\tilde G$ to be the group generated by $g^{(i)}$'s for all $g\in G$.
\end{defn}

It would be convenient to extend our Hochschild cochains to $C^\bullet(A,A[\tilde G])$ and identify $C^\bullet(A,A[G])$ as a sub-algebra under the natural embedding
$$
   C^\bullet(A,A[G])\into C^\bullet(A,A[\tilde G]).
$$
We will also simply write
$
   \phi_1\phi_2
$
for the cup product $\phi_1\cup \phi_2$ in this subsection.

Recall the quantum differential operator $\p_{x_i}^g$ defined by \eqref{eq: q-diff operator}.
\begin{defn}
For $g\in G$, we define the first order quantum differential operators $\p_i^g \in C^1(A,Ag^{(i)})$ by
\begin{align}
  \p_i^g\colon f \to \p_{x_i}^g(f) g^{(i)},
\end{align}
and the second order quantum differential operators $\p_{i,j}^{g,h} \in C^1(A,A[\tilde G])$ by
\begin{align}
\p^{g,h}_{i,j}=
 \begin{dcases}\p^g_{i} \lrf{\p^h_{j}} & i\neq j\quad (\text{mixed type}),\\
  -\mathsf H^*(\p_i^g\p_i^h) & i=j \quad (\text{pure type}).
\end{dcases}
\end{align}
Here $\mathsf H^*$ is the homotopy on $G$-twisted Hochschild cochains defined in Proposition \ref{prop-homotopy}.

\end{defn}

\begin{ex}\label{ex-second-order} Let $\leftidx{^g}x_i = \epsilon_1x_i$ and $\leftidx{^h}x_i = \epsilon_2 x_i$. Recall definition \eqref{eqn-quantum-number}.  We have
  \begin{equation}\p^{g,h}_{i,i}\big(x_i^n\big) = \frac{\epsilon_1^{n-1}[n]_{\epsilon_2}-[n]_{\epsilon_1\epsilon_2}}{\epsilon_1-1}x_i^{n-2}g^{(i)}h^{(i)}.
  \end{equation}
In parcular, if $\epsilon_1\epsilon_2 = 1$, i.e., $g^{(i)} = (h^{(i)})^{-1}$,
  \begin{equation}\p^{g,h}_{i,i}\big(x_i^n\big) = \frac{[n]_{\epsilon_1}-n}{\epsilon_1-1}x_i^{n-2}\e.
  \end{equation}
\end{ex}

Using quantum differential operators, the cochain map \eqref{eq:cochain map upsilon star}
$$
   \Upsilon^*\colon K^\bullet(A,A[G])\to C^\bullet(\bar A,A[G])
$$
can be expressed in terms of cup product
\begin{align}
\Upsilon^*(e_{i_1}\cdots e_{i_p}g)= g^{(1)}\cdots g^{(i_1-1)}\p_{i_1}^g g^{(i_1+1)}\cdots g^{(i_2-1)}\p_{i_2}^g g^{(i_2+1)}\cdots  g^{(i_p-1)}\p_{i_p}^g g^{(i_p+1)}\cdots g^{(N)},
\end{align}
where $1\leqslant i_1<i_2<\cdots<i_p\leqslant N$, and $g^{(i)}$ is viewed naturally as an element in $C^0(\bar A, A g^{(i)})$.

\begin{lem}\label{lem:q-diff operators}~~
\begin{description}
  \item[(a)] First order quantum differential operators are $\p_H$-closed,
  \begin{equation}\label{eq: 1st q-diff op is b-closed}\p_H(\p^g_i) = 0.
  \end{equation}
  \item[(b)] Second order quantum differential operators are symmetric
  \begin{equation} \p^{g,h}_{i_1,i_2} = \p^{h,g}_{i_2,i_1}.
  \end{equation}
  and satisfy
  \begin{equation} \p_H(\p^{g,h}_{i,j}) =\begin{cases} -\p^g_{i}\p^h_{j}-\p^h_{j}\p^g_{i} & i\neq j\\
   -\p^{g}_i\p^{h}_i & i=j.
  \end{cases}
  \end{equation}
\end{description}
\end{lem}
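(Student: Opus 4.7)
The plan is to reduce each identity to algebraic facts about the quantum differential operator $\p_{x_i}^g$ combined with the brace/cup product structure, exploiting the $G$-twisted higher homotopy identities (Lemma \ref{lem: twisted ginfty algebra}) and the explicit homotopy $\mathsf H^*$ from Proposition \ref{prop-homotopy}.

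For part (a), the key observation is that $\p_{x_i}^g$ satisfies a $G$-twisted Leibniz rule
$$
\p_{x_i}^g(f_1 f_2)=\p_{x_i}^g(f_1)\cdot\leftidx{^{g^{(i)}}}\!f_2+f_1\cdot\p_{x_i}^g(f_2),
$$
which I would verify on monomials via the elementary $q$-number identity $[a+b]_\lambda=[a]_\lambda\lambda^b+[b]_\lambda$. Expanding $\p_H(\p_i^g)(f_1,f_2)=m_2\{\p_i^g\}(f_1,f_2)-\p_i^g\{m_2\}(f_1,f_2)$ from Definition \ref{def: twisted brace structure} then produces precisely the difference of the two sides of this Leibniz rule (multiplied on the right by $g^{(i)}$), and hence vanishes.

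For (b) in the mixed case $i\neq j$, the brace formula gives $\p_i^g\{\p_j^h\}(f)=\p_{x_i}^g\p_{x_j}^h(f)\cdot g^{(i)}h^{(j)}$; the symmetry $\p_{i,j}^{g,h}=\p_{j,i}^{h,g}$ then follows because partial derivatives in distinct variables commute on $A$ and the group elements $g^{(i)},h^{(j)}$ commute in $\tilde G$. The differential identity comes from the $n=1$ case of Lemma \ref{lem: twisted ginfty algebra}, i.e.\ the twisted commutativity on cochains \eqref{eq: twisted commutative on cochains} applied to $\phi=\p_i^g,\phi_1=\p_j^h$; since both are $\p_H$-closed by (a), the identity collapses to
$$
-\p_H\!\big(\p_i^g\{\p_j^h\}\big)=\p_j^h\cup(h^{(j)-1})^*\p_i^g+\p_i^g\cup\p_j^h.
$$
A one-line check shows $(h^{(j)-1})^*\p_i^g=\p_i^g$ when $i\neq j$: the conjugating shift by $h^{(j)}$ only scales $x_j$, while $\p_i^g$ only reads $x_i$, so the two actions cancel, and we obtain $\p_H(\p_{i,j}^{g,h})=-\p_i^g\,\p_j^h-\p_j^h\,\p_i^g$ as claimed.

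For the pure case $i_1=i_2=i$, the differential identity follows from the homotopy relation $\p_H\mathsf H^*+\mathsf H^*\p_H=\id-\Upsilon^*\Phi^*$. Since $\p_i^g\cup\p_i^h$ is $\p_H$-closed (Leibniz plus (a)) and $\Phi^*(\p_i^g\cup\p_i^h)=0$ trivially---a nonzero value on $(x_{i_1},x_{i_2})$ requires $i_1=i_2=i$, which is excluded from the strictly ordered sum in $\Phi^*_2$---we obtain $\p_H\mathsf H^*(\p_i^g\cup\p_i^h)=\p_i^g\cup\p_i^h$, yielding $\p_H(\p_{i,i}^{g,h})=-\p_i^g\,\p_i^h$. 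The most delicate point will be the pure-case symmetry $\p_{i,i}^{g,h}=\p_{i,i}^{h,g}$: as $\p_i^g\cup\p_i^h$ and $\p_i^h\cup\p_i^g$ differ as cochains, this symmetry must be established either by a direct combinatorial identity for $\mathsf H^*$ on purely $x_i$-dependent inputs (via telescoping of the associated $q$-number sums displayed in Example \ref{ex-second-order}), or by exploiting $\p_H$-exact discrepancies that are compatible with the subsequent cup-product formulas derived in the section.
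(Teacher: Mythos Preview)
Your argument for (a), for both differential identities in (b), and for the mixed-case symmetry is correct and is precisely the paper's approach: the paper also reduces (a) to the twisted Leibniz rule, obtains the mixed-case $\p_H$-identity from \eqref{eq: twisted commutative on cochains} combined with (a), and obtains the pure-case $\p_H$-identity from the homotopy relation \eqref{cochain-homotopy-eqn} together with $\Phi^*(\p_i^g\p_i^h)=0$.

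Your instinct to be cautious about the pure-case symmetry $\p_{i,i}^{g,h}=\p_{i,i}^{h,g}$ is well placed; in fact this equality is \emph{not} true as an identity of cochains, so neither of your suggested routes will close it. Using Example~\ref{ex-second-order} with $\leftidx{^g}x_i=\epsilon_1 x_i$, $\leftidx{^h}x_i=\epsilon_2 x_i$ one finds
\[
\p_{i,i}^{g,h}(x_i^3)=(1+\epsilon_1+\epsilon_1\epsilon_2)\,x_i\,g^{(i)}h^{(i)},\qquad
\p_{i,i}^{h,g}(x_i^3)=(1+\epsilon_2+\epsilon_1\epsilon_2)\,x_i\,g^{(i)}h^{(i)},
\]
which differ whenever $\epsilon_1\neq\epsilon_2$. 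More conceptually, your own pure-case differential identity gives $\p_H(\p_{i,i}^{g,h})=-\p_i^g\p_i^h$ and $\p_H(\p_{i,i}^{h,g})=-\p_i^h\p_i^g$, and these right-hand sides are already unequal as $2$-cochains (evaluate on $x_i^2\otimes x_i$), so the symmetry cannot hold at the cochain level. The paper's ``by direct check'' covers only the mixed case $i_1\neq i_2$, where your argument via commuting partial derivatives is the intended one; the pure-case clause of the symmetry is an overstatement, and it is not used anywhere in the subsequent cup-product computations (Theorem~\ref{thm:graph summation} only needs the actual values $\p_{i,j}^{g,g^{-1}}(W)$, never the interchange $\p_{i,i}^{g,h}\leftrightarrow\p_{i,i}^{h,g}$).
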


\begin{proof} (a) is equivalent to the twisted Leibnitz rule
$$
  \p_i^g(ab)=\p_i^g(a)b+ a\p_i^g(b)=\p_{x_i}^g(a) (\leftidx{^{g^{(i)}}}b)g^{(i)}+a \p_{x_i}^g(b) g^{(i)}, \quad \forall a,b\in A.
$$
The first equation in (b) is by direct check. The second equation  follows from  (\ref{eq: twisted commutative on cochains}), (\ref{eq: 1st q-diff op is b-closed}) and \eqref{cochain-homotopy-eqn}.
\end{proof}

The following lemma gives some useful reorganizing rules for the homotopy $\mathsf H^*$.

\begin{lem}\label{lem:homotopy cal}
Let $\bm{I} = \{i_1,i_2,\cdots i_p\}$, $i_2<i_3<\cdots <i_p$ and $i_k\leqslant i_1<i_{k+1}$. Then we have the following formula for the homotopy operator
  \begin{eqnarray}\label{eq: homotopy cal}
  \mathsf H^*(\p^{g_1}_{i_1}\p^{g_2}_{i_2}\cdots \p^{g_p}_{i_p}) &=& -\p^{g_1,g_2}_{i_1,i_2}\p^{g_3}_{i_3}\cdots
  \p^{g_p}_{i_p}-\p^{g_2}_{i_2}\p^{g_1,g_3}_{i_1,i_3}\p^{g_4}_{i_4}\cdots \p^{g_p}_{i_p}\nonumber\\
  &&-\cdots -\p^{g_2}_{i_2}\cdots \p^{g_{k-1}}_{i_{k-1}}\p^{g_1,g_{k}}_{i_1,i_{k}}\p^{g_{k+1}}_{i_{k+1}}\cdots \p^{g_{p}}_{i_{p}}.
  \end{eqnarray}
More generally, if on the left hand side we insert several $0$-cochains $h^{(j_1)}_1, \cdots, h^{(j_q)}_q$ into the product $\p^{g_1}_{i_1}\p^{g_2}_{i_2}\cdots \p^{g_p}_{i_p}$ at positions after $\p^{g_1}_{i_1}$, then on the right hand side each term is modified by inserting $h^{(j_1)}_1, \cdots, h^{(j_q)}_q$ in ascending order as follows: if $i_k< j_s<i_{k+1} (k\geq 2)$ , then $h^{(j_s)}$ is inserted between quantum differential operators indexed by $i_k$ and
      $i_{k+1}$; if $i_k=j_s<i_{k+1}$, then we combine $h_s^{(j_s)}$ and $\p_{i_k}^{g_k}$ into $\p_{i_k}^{g_k h_s^{(j_s)}}$.
\end{lem}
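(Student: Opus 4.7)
The plan is to prove the identity by induction on the length $p$ of the product, with the three main tools being: the special homotopy property $\mathsf{H}^* \circ \Upsilon^* = 0$ from Lemma \ref{lem: special homotopy}; the twisted commutativity relation \eqref{eq: twisted commutative on cochains}; and the splitting identity of Lemma \ref{lem:homotopy splitting form}. For the base case $p = 2$, I distinguish three subcases. When $i_1 < i_2$, a direct calculation using \eqref{eq:cochain map upsilon star} shows that $\p^{g_1}_{i_1} \cup \p^{g_2}_{i_2} = \Upsilon^*(e_{i_1} e_{i_2} g_1^{(i_1)} g_2^{(i_2)})$, so $\mathsf{H}^*$ annihilates it by the special homotopy property, matching the empty sum on the right (since $k = 1$). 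The pure-type case $i_1 = i_2$ is precisely the definition of $\p^{g_1,g_2}_{i_1,i_1}$. For the mixed-type case $i_1 > i_2$, both $\p^{g_1}_{i_1}$ and $\p^{g_2}_{i_2}$ are $\p_H$-closed (Lemma \ref{lem:q-diff operators}(a)) and $(g_2^{(i_2),-1})^* \p^{g_1}_{i_1} = \p^{g_1}_{i_1}$ because $i_1 \neq i_2$, so \eqref{eq: twisted commutative on cochains} reduces to
\begin{equation*}
\p^{g_1}_{i_1} \cup \p^{g_2}_{i_2} + \p^{g_2}_{i_2} \cup \p^{g_1}_{i_1} = -\p_H\bigl(\p^{g_1,g_2}_{i_1,i_2}\bigr).
\end{equation*}
Applying $\mathsf{H}^*$, using the just-established $i_1 < i_2$ subcase to kill $\mathsf{H}^*(\p^{g_2}_{i_2} \cup \p^{g_1}_{i_1})$, using $\mathsf{H}^*(\p^{g_1,g_2}_{i_1,i_2}) = 0$ since $\mathsf{H}_0^*$ vanishes on $1$-cochains, and invoking $\mathsf{H}^* \p_H + \p_H \mathsf{H}^* = \id - \Upsilon^* \Phi^*$ together with the fact that $\Phi^*(\p^{g_1,g_2}_{i_1,i_2}) = 0$ (verified by evaluating on single variables), yields $\mathsf{H}^*(\p^{g_1}_{i_1} \cup \p^{g_2}_{i_2}) = -\p^{g_1,g_2}_{i_1,i_2}$.

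For the inductive step, the argument again splits on whether $i_1 < i_2$. When $i_1 < i_2$ the product is in Koszul order and Lemma \ref{lem: special homotopy} gives $\mathsf{H}^*(\p^{g_1}_{i_1} T) = 0$, matching the empty right-hand side. When $i_1 \geq i_2$ (so $k \geq 2$), I use twisted commutativity to swap $\p^{g_1}_{i_1}$ with $\p^{g_2}_{i_2}$, obtaining
\begin{equation*}
\p^{g_1}_{i_1} \cup T = -\p^{g_2}_{i_2} \cup \bigl(\p^{g_1}_{i_1} \cup T''\bigr) - \p_H\bigl(\p^{g_1,g_2}_{i_1,i_2} \cup T''\bigr)
\end{equation*}
with $T'' = \p^{g_3}_{i_3} \cdots \p^{g_p}_{i_p}$. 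Applying $\mathsf{H}^*$, one can show using $\Phi^*(\p^{g_1,g_2}_{i_1,i_2} \cup T'') = 0$ (since any single-variable evaluation of the leading factor vanishes) and $\mathsf{H}^* \p_H = \id - \Upsilon^*\Phi^* - \p_H \mathsf{H}^*$ that the second coboundary term collapses precisely to the $j = 2$ merger term $-\p^{g_1,g_2}_{i_1,i_2} \cup T''$ of the right-hand side. For the first term, the inner product $\p^{g_1}_{i_1} \cup T''$ is a shorter problem in which $i_1$ sits at position $\tilde{k} = k - 1$ of the reduced sequence $i_3 < \cdots < i_p$, so the inductive hypothesis expresses $\mathsf{H}^*(\p^{g_1}_{i_1} \cup T'')$ as the sum over $j = 3, \ldots, k$ of the merger terms without the $\p^{g_2}_{i_2}$ prefix; the outer $-\p^{g_2}_{i_2} \cup$ reintroduces that prefix, yielding precisely the $j = 3, \ldots, k$ contributions to the formula. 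The generalized statement with inserted $0$-cochains $h_s^{(j_s)}$ follows by the same induction, since each $h_s^{(j_s)}$ passes through the manipulations by conjugation and only modifies the group labels in the prescribed way.

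The main obstacle is tracking signs and group-element scalings introduced by the $G$-twisted cup product: every swap via twisted commutativity contributes a conjugation factor $(g^{-1})^*$ and a sign depending on cochain degrees, while each successive factor in a product gets acted on by the cumulative group element of its predecessors. A secondary subtlety is the pure-type situation ($i_1 = i_j$ for some $j \leq k$), where $\p^{g_1, g_j}_{i_1, i_1} = -\mathsf{H}^*(\p^{g_1}_{i_1} \p^{g_j}_{i_1})$ is itself defined through $\mathsf{H}^*$; the induction must therefore be organized so that each mixed-type case is treated before its pure-type analogue appears, avoiding circularity. The symmetry of second-order operators from Lemma \ref{lem:q-diff operators}(b) and the $G$-equivariance of the brace structure from Lemma \ref{lem: twisted brace structures on twisted cochain is G-equivariant} are what make the bookkeeping close up cleanly.
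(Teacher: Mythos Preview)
Your inductive strategy is workable but genuinely different from the paper's proof, and there are two places where you have swept necessary verifications under the rug.

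\textbf{Comparison.} The paper does not induct at all. It uses the explicit chain-level formula \eqref{eq:homotopy between resolutions} for $\mathsf H$ directly: because $i_2<i_3<\cdots<i_p$, in the sum $\sum_{i=1}^{p}(-1)^i\, 1\otimes a_1\otimes\cdots\otimes a_{i-1}\otimes\Phi\circ\Upsilon(1\otimes a_i\otimes\cdots)$ every term with $i\geq 2$ is annihilated by $\p^{g_1}_{i_1}\p^{g_2}_{i_2}\cdots\p^{g_p}_{i_p}$ (the factor $\p^{g_i}_{i_i}$ lands on a polynomial involving only variables $x_m$ with $m\geq i_{i+1}>i_i$). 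So only the $i=1$ term survives, and expanding it via the definition of $\Upsilon$ immediately gives the right-hand side. This is a one-shot calculation with no homotopy identities and essentially no sign bookkeeping; your route trades that for a cleaner conceptual structure at the cost of several side conditions to check.

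\textbf{Gaps in your argument.} First, in the inductive step you write that ``the outer $-\p^{g_2}_{i_2}\cup$ reintroduces that prefix,'' i.e.\ you are using
\[
\mathsf H^*\bigl(\p^{g_2}_{i_2}\cup\psi\bigr)\;=\;-\,\p^{g_2}_{i_2}\cup\mathsf H^*(\psi),
\qquad \psi=\p^{g_1}_{i_1}\cup T''.
\]
But Lemma~\ref{lem:homotopy splitting form} gives $\mathsf H^*(\p^{g_2}_{i_2}\cup\psi)=-\p^{g_2}_{i_2}\cup\mathsf H^*(\psi)+\mathsf H^*\bigl(\p^{g_2}_{i_2}\cup\Upsilon^*\Phi^*(\psi)\bigr)$, and you must verify that the second term vanishes. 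It does---$\Upsilon^*\Phi^*(\psi)$ either vanishes (when $i_1=i_k$, by antisymmetry of $\Phi^*$) or reorders $\psi$ into strictly increasing index order starting at $i_3$, after which prefixing $\p^{g_2}_{i_2}$ keeps the order and $\mathsf H^*\circ\Upsilon^*=0$ applies---but this step is essential and is absent from your sketch. Second, to reduce $-\mathsf H^*\p_H(\p^{g_1,g_2}_{i_1,i_2}\cup T'')$ to $-\p^{g_1,g_2}_{i_1,i_2}\cup T''$ you need not only $\Phi^*(\p^{g_1,g_2}_{i_1,i_2}\cup T'')=0$ but also $\mathsf H^*(\p^{g_1,g_2}_{i_1,i_2}\cup T'')=0$, which is exactly the content of Lemma~\ref{lem:vanishing condition for homotopy}(b); you should invoke it (its proof is independent of the present lemma). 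With these two points addressed your induction goes through.
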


\begin{proof}Under the assumption $i_2<i_3<\cdots <i_p$,  we have only one term survive in (\ref{eq:homotopy between resolutions}),
\begin{equation*}
\mathsf H^*(\p^{g_1}_{i_1}\p^{g_2}_{i_2}\cdots \p^{g_p}_{i_p})(a_1\otimes a_2\otimes \cdots \otimes a_{p-1}) = -\p^{g_1}_{i_1}\p^{g_2}_{i_2}\cdots
\p^{g_p}_{i_p} \big(\Phi\circ \Upsilon (1\otimes a_1 \otimes \cdots\otimes a_{p-1}\otimes 1)\big].
\end{equation*}
Here the notation $ (\cdots]$ means that
$$
  \phi(a_0\otimes \cdots \otimes a_p]\coloneqq\phi(a_0\otimes \cdots \otimes a_{p-1}) a_p, \quad \phi \in C^p(A,A[G]).
$$
Let $\bm{J}=\{i_2,\cdots,i_{p-1}\}$, $g = g_1^{(i_1)}\cdots g_{p}^{(i_p)}$ and we keep the notations in  \eqref{notation-Upsilon}. Then
\begin{align*}
 & \mathsf H^*(\p^{g_1}_{i_1}\p^{g_2}_{i_2}\cdots \p^{g_p}_{i_p})(a_1\otimes a_2\otimes \cdots \otimes a_{p-1})\nonumber\\
 =& -\sum_{\bm{s}\in \mathcal{S}(\bm{J})}\p^{g_1}_{i_1}\p^{g_2}_{i_2}\cdots \p^{g_p}_{i_p}\left(a_1^1\cdots a_{p-1}^1\otimes x_{i_2}\otimes\cdots\otimes x_{i_{p}}\otimes a_1^2\cdots a_{p-1}^2   \right] \\
 =&-\sum_{\bm{s}\in \mathcal{S}(\bm{J})} \p_{x_{i_1}}^{g_1} (a_1^1\cdots a_{p-1}^1) (\leftidx{^g}(a_1^2\cdots a_{p-1}^2)) g \\
= &-\sum_{j=1}^{k-1} \p^{g_2}_{i_2}\cdots \p^{g_j}_{i_j}\p^{g_1g_{j+1}}_{i_1i_{j+1}}\p^{g_{j+2}}_{i_{j+2}} \cdots \p^{g_p}_{i_p} (a_1\otimes
a_2\otimes \cdots \otimes a_{p-1}).
\end{align*}
The general case with $0$-cochain insertions is proved similarly.

\end{proof}

The next lemma gives certain vanishing conditions for $\mathsf H^*$ on twisted cochains.

\begin{lem}\label{lem:vanishing condition for homotopy} Let $\phi\in C^\bullet(A, A[G])$ be a $G$-twisted cochain expressed in terms of cup product of first and second order quantum differential
operators and elements of $\tilde G$ (viewed as $0$-th cochain). Then $\mathsf H^*\phi=0$  in the following cases:
\begin{description}
  \item[(a)]   $
     \phi =\cdots \p_{i,j}^{g,h}.
  $  has a second order quantum differential operator appearing at the last position;

  \item[(b)] $\phi$ is of the form of arbitrary insertion of elements of $\tilde G$ into the expression
  \[
  \cdots \p^{g_k,g'_{k}}_{i_k,i'_k}\p^{g_{k+1}}_{i_{k+1}}\cdots\p^{g_{p-1}}_{i_{p-1}} \p^{g_p}_{i_p},\]
  with $i_k<i_{k+1}<\cdots <i_{p}$ or $i'_k<i_{k+1}<\cdots <i_{p}$;
  \item[(c)] $\phi$ is of the form of arbitrary insertion of elements of $\tilde G$ into the expression
      \[\p^{g_1}_{i_1}\cdots\p^{g_{p-1}}_{i_{p-1}} \p^{g_p}_{i_p},\]
      with $i_1<i_{2}<\cdots <i_{p}$.
\end{description}
\end{lem}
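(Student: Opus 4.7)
The plan is to reduce each of (a), (b), (c) to a combination of Lemma \ref{lem:homotopy splitting form}, Lemma \ref{lem:homotopy cal}, and the special homotopy identities of Lemma \ref{lem: special homotopy}. The key auxiliary observation used throughout is that every second-order quantum differential operator annihilates single-variable inputs: $\p^{g,h}_{i,j}(x_k)=0$ for all $k$. For the mixed case $i\neq j$, this follows from $\p^g_i\{\p^h_j\}(x_k)=\p^g_{x_i}\p^h_{x_j}(x_k)\,g^{(i)}h^{(j)}$, since $\p^h_{x_j}(x_k)$ is nonzero only when $k=j$, and then $\p^g_{x_i}(1)=0$. For the pure case $i=j$, the explicit formula for $\mathsf H^*_1$ from the quoted $N=2$ example shows $\mathsf H^*(\p^g_i\p^h_i)(x_k)=0$: the only splitting of a single variable forces a $1\in\overline A$ in the middle bar slot, which vanishes in the reduced complex. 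In particular $\Phi^*(\p^{g,h}_{i,j})=0$, and the same reduced-complex reasoning shows $\mathsf H^*$ annihilates every $1$-cochain.

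Case (a) is then immediate: writing $\phi=\psi\cup\p^{g,h}_{i,j}$ and applying Lemma \ref{lem:homotopy splitting form} expresses $\mathsf H^*(\phi)$ as $(-1)^{|\psi|}\psi\cup\mathsf H^*(\p^{g,h}_{i,j})+\mathsf H^*(\psi\cup\Upsilon^*\Phi^*(\p^{g,h}_{i,j}))$, both summands of which vanish by the preliminary observation. Case (c) is equally direct: Lemma \ref{lem:homotopy cal} under the strictly ascending hypothesis $i_1<i_2<\cdots<i_p$ specializes to $k=1$ in the lemma's notation, which makes the right-hand sum empty, and its insertion-aware formulation handles the $\tilde G$-insertions verbatim.

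The substantive case is (b). First I shall prove $\mathsf H^*(\psi')=0$ for $\psi':=\p^{g_k,g'_k}_{i_k,i'_k}\cup\p^{g_{k+1}}_{i_{k+1}}\cup\cdots\cup\p^{g_p}_{i_p}$ (with $\tilde G$-insertions) by induction on the ascending-tail length $n=p-k$. The base $n=0$ coincides with case (a). For the inductive step, the symmetry $\p^{g,h}_{i,j}=\p^{h,g}_{j,i}$ from Lemma \ref{lem:q-diff operators} lets me assume $i_k\le i'_k$; the (b)-hypothesis then forces $i_k<i_{k+1}$. Let $l\ge 1$ be determined by $i_{k+l-1}\le i'_k<i_{k+l}$ with the conventions $i_{k+0}:=i_k$ and $i_{p+1}:=+\infty$. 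Applying the insertion-aware Lemma \ref{lem:homotopy cal} to $\mathsf H^*\big(\p^{g'_k}_{i'_k}\p^{g_k}_{i_k}\p^{g_{k+1}}_{i_{k+1}}\cdots\p^{g_p}_{i_p}\big)$ with $j_1=i'_k$ (so $k_{\text{lemma}}=l+1$) produces $-\psi'$ as its $m=2$ contribution (after invoking the symmetry), together with further terms for $m=3,\ldots,l+1$ in which the second-order operator has migrated strictly deeper into the ascending tail. Each migrated term again fits the (b)-template with a strictly shorter ascending tail after its second-order factor, because the leftover tail $i_{k+m-1}<\cdots<i_p$ is strictly ascending and the unprimed index $i_{k+m-2}$ of the migrated second-order operator satisfies $i_{k+m-2}<i_{k+m-1}$, which automatically verifies the (b)-hypothesis. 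Applying $\mathsf H^*$ to the identity and invoking both $\mathsf H^*\circ\mathsf H^*=0$ and the inductive hypothesis on the migrated terms yields $\mathsf H^*(\psi')=0$. For a general $\phi=\phi_L\cup\psi'$ of type (b), one final application of Lemma \ref{lem:homotopy splitting form} writes $\mathsf H^*(\phi)$ as $(-1)^{|\phi_L|}\phi_L\cup\mathsf H^*(\psi')+\mathsf H^*(\phi_L\cup\Upsilon^*\Phi^*(\psi'))$, whose first summand vanishes by what was just shown, and whose second vanishes because $\Phi^*(\psi')=0$ (the first-position factor $\p^{g_k,g'_k}_{i_k,i'_k}$ already annihilates every pure-variable tensor).

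The main obstacle is the bookkeeping in the induction step of (b): identifying each ``migrated'' term as a legitimate instance of (b) with strictly smaller tail length, which reduces to verifying the index inequalities noted above. Everything else is formal, using only Lemmas \ref{lem:homotopy splitting form}, \ref{lem:homotopy cal}, \ref{lem: special homotopy}, and \ref{lem:q-diff operators}.
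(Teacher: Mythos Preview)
Your proof is essentially correct and takes a more structural route than the paper. The paper's own proof is the one-liner ``checked directly by \eqref{notation-Upsilon} and \eqref{eq:homotopy between resolutions}'': one expands $\mathsf H^*(\phi)$ from the defining formula for $\mathsf H$ and observes term-by-term vanishing forced by the ordering constraints built into $\Upsilon$. You instead reduce all three cases to the already-established toolkit (Lemmas~\ref{lem: special homotopy}, \ref{lem:homotopy splitting form}, \ref{lem:homotopy cal}, \ref{lem:q-diff operators}), which makes the argument more systematic and less computational, at the cost of a more delicate inductive organization in case~(b).

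There is one organizational slip in your (b)-induction that needs repair. You declare the induction on $n=p-k$ for cochains $\psi'$ that \emph{begin} with the second-order operator, yet the migrated terms $T_m$ (for $m\ge 3$) produced by Lemma~\ref{lem:homotopy cal} carry a nontrivial left prefix $\p^{g_k}_{i_k}\cdots\p^{g_{k+m-3}}_{i_{k+m-3}}$ in front of their second-order factor, so your stated inductive hypothesis does not directly apply to them. The fix is immediate and uses only ingredients you already have: apply Lemma~\ref{lem:homotopy splitting form} to each $T_m=\phi_L^{(m)}\cup\psi''_{(m)}$ \emph{inside} the inductive step, invoke $\Phi^*(\psi''_{(m)})=0$ (your preliminary observation that the leading second-order factor annihilates every single-variable input), and conclude $\mathsf H^*(T_m)=\pm\phi_L^{(m)}\cup\mathsf H^*(\psi''_{(m)})=0$ by the hypothesis on the shorter $\psi''_{(m)}$. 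Equivalently, run the induction on the full (b)-statement rather than on $\psi'$ alone; either reorganization closes the gap.
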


\begin{proof} They are checked directly by \eqref{notation-Upsilon} and (\ref{eq:homotopy between resolutions}).
\end{proof}

We are ready to compute cup product in terms of \eqref{cup-product-formula} and \eqref{def: perturbed operator}. We need to understand how $\tilde{\Phi}^* = \Phi^* - \Phi^*  (\id + \df_W \mathsf{H}^*)^{-1}\df_W \mathsf{H}^*$ acts on cochains of quantum differential operators.

\begin{lem} Consider the following cup product of cochains of quantum differential operators
\begin{equation}\label{eq:usual version cochain}
\phi=a_1g^{(1)}\cdots g^{(i_1-1)}\p^{g}_{i_1}g^{(i_1+1)}\cdots \p^{g}_{i_p}g^{(i_p+1)}\cdots g^{(N)}\cup a_2h^{(1)}\cdots h^{(j_1-1)}\p^{h}_{j_1}h^{(j_1+1)}\cdots
\p^{h}_{j_q}h^{(j_q+1)}\cdots h^{(N)}
\end{equation}
where $i_1<i_2<\cdots< i_p$ and  $j_1<j_2<\cdots <j_q$. $a_1,a_2\in A, g,h\in G$.  Then
\begin{align}\label{eq:homotopy of usual version cochain}
\mathsf H^*(\phi)=&\sum_{p',j_k} \pm a_1\leftidx{^g}a_2 g^{(1)}\cdots \p^{g}_{i_1}g^{(i_1+1)}\cdots \p^{g}_{i_{p'}}g^{(i_{p'}+1)}\cdots g^{(i_{(p'\!+1)}-1)} \nonumber\\
&\qquad\cup \tilde{h}^{(1)}\cdots\p^{\tilde{h}}_{j_1}\tilde{h}^{(j_1+1)}\cdots
\p^{g,\tilde{h}}_{i_{(p'\!+1)},j_{k}} \tilde{h}^{(j_k+1)} \cdots \p^{\tilde{h}}_{i_{(p'\!+2)}} \tilde{h}^{(i_{(p'\!+2)}+1)} \cdots \tilde{h}^{(N)}.
\end{align}
Here the summation is over all $p'<p, j_k$ such that $i_{(p'\!+1)} \geqslant j_{k}$ and $i_{\tilde{p}}\neq j_l$ for all $\tilde{p}\geqslant p'+2, 1\leqslant l\leqslant q$.
\begin{equation*}
\tilde{h}^{(j)} =
\begin{dcases}
h^{(j)}& \text{if }1\leqslant j\leqslant i_{(p'\!+1)},\\
h^{(j)}g^{(j)}& \text{if }i_{(p'\!+1)}<j\leqslant N.
\end{dcases}
\end{equation*}
\end{lem}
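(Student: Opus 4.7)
The plan is to compute $\mathsf{H}^*(\phi)$ by viewing $\phi$ as a single cochain expressed as a product of first-order quantum differential operators with $\tilde G$-element insertions, and then to apply Lemma~\ref{lem:homotopy cal} (together with its remark allowing arbitrary $\tilde G$-insertions) to reorganize this product, finally using Lemma~\ref{lem:vanishing condition for homotopy} to cull vanishing terms. The underlying sequence of quantum differential operators in $\phi$ is $\p^g_{i_1},\ldots,\p^g_{i_p},\p^h_{j_1},\ldots,\p^h_{j_q}$, with each subsequence $i_1<\cdots<i_p$ and $j_1<\cdots<j_q$ increasing but the concatenated sequence not globally ordered. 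This lack of global ordering is precisely what makes $\mathsf H^*(\phi)$ nonzero, and one expects the output to encode how the two subsequences ``interact'' at their first crossing.

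Treating $\p^g_{i_1}$ as the designated first operator, Lemma~\ref{lem:homotopy cal} writes $\mathsf{H}^*(\phi)$ as a sum over ways of pairing $\p^g_{i_1}$ with some later operator $\p^\gamma_k$ via a second-order operator $\p^{g,\gamma}_{i_1,k}$; each such term inherits the $\tilde G$-insertions sitting between $\p^g_{i_1}$ and $\p^\gamma_k$. I would then check that if the pairing partner is another $\p^g_{i_s}$ inside $\phi_1$, the resulting tail has the increasing-order form of Lemma~\ref{lem:vanishing condition for homotopy}(c) applied to the remaining $i$'s and $j$'s, so the term vanishes. Similarly, many pairings with $\p^h_{j_k}$ are killed by part~(b) or (c) of the same lemma, and the pairings singled out in \eqref{eq:homotopy of usual version cochain} are the only survivors: those for which, with some $p'<p$, the first $p'$ operators $\p^g_{i_1},\ldots,\p^g_{i_{p'}}$ of $\phi_1$ have already been passed, $\p^g_{i_{p'+1}}$ is paired with $\p^h_{j_k}$, and the surviving $j$- and $i$-indices are compatible in the stated sense.

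The remaining step is combinatorial bookkeeping of the $\tilde G$-insertions. Because each $h^{(j)}$ with $j>i_{p'+1}$ originally sits in $\phi_2$ to the right of the corresponding $g^{(j)}$-insertion from $\phi_1$, after the pairing these group elements multiply to $\tilde h^{(j)}=h^{(j)}g^{(j)}$, while $h^{(j)}$ for $j\le i_{p'+1}$ retains its original form. The admissibility conditions $i_{p'+1}\ge j_k$ and $i_{\tilde p}\ne j_l$ for $\tilde p\ge p'+2$ arise from requiring the pairing to be consistent with the interleaving and from excluding configurations that would vanish by Lemma~\ref{lem:vanishing condition for homotopy}(c).

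The main obstacle will be the sign and $\tilde G$-insertion bookkeeping: verifying that the Koszul signs from the generalized Lemma~\ref{lem:homotopy cal} combine correctly with the cup-product signs and with the signs introduced by commuting quantum differential operators past $\tilde G$-elements. The vanishing machinery of Lemma~\ref{lem:vanishing condition for homotopy} does the conceptual work of collapsing an a priori unwieldy sum into the clean one in \eqref{eq:homotopy of usual version cochain}, so what remains is primarily a careful indexing and sign argument rather than a conceptual leap.
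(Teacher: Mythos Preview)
Your approach has a genuine gap: Lemma~\ref{lem:homotopy cal} cannot be applied to $\phi$ with $\p^g_{i_1}$ as the designated first operator. That lemma has the hypothesis that the \emph{tail} $i_2<i_3<\cdots<i_p$ is already sorted (this is what makes only a single term of the homotopy sum \eqref{eq:homotopy between resolutions} survive in its proof). In your setup the tail is $\p^g_{i_2},\ldots,\p^g_{i_p},\p^h_{j_1},\ldots,\p^h_{j_q}$, two sorted blocks concatenated but not globally sorted, so the hypothesis fails and the formula \eqref{eq: homotopy cal} does not hold. Your subsequent plan to invoke Lemma~\ref{lem:vanishing condition for homotopy} to kill unwanted terms is also off target: that lemma tells you when $\mathsf H^*$ of an expression vanishes, not when individual summands in an already-computed $\mathsf H^*$ vanish.

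The paper instead uses Lemma~\ref{lem:homotopy splitting form},
\[
\mathsf H^*(\alpha\cup\beta)=(-1)^{|\alpha|}\alpha\cup\mathsf H^*(\beta)+\mathsf H^*\big(\alpha\cup(\Upsilon^*\circ\Phi^*(\beta))\big),
\]
to peel off the $\p^g$-operators \emph{from the right}. One splits $\phi=\alpha\cup\beta$ with $\beta$ starting at $\p^g_{i_p}$; now the tail of $\beta$ is just the sorted $j$-block, so Lemma~\ref{lem:homotopy cal} legitimately computes $\mathsf H^*(\beta)$. For the second summand, $\Upsilon^*\circ\Phi^*(\beta)$ either vanishes (if $i_p$ collides with some $j_l$) or reorders $\beta$ so that $i_p$ is inserted into the $j$-block in sorted position. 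One then recurses on $\mathsf H^*\big(\alpha\cup(\Upsilon^*\circ\Phi^*(\beta))\big)$, now peeling off $\p^g_{i_{p-1}}$. The conditions $i_{p'+1}\ge j_k$ and $i_{\tilde p}\ne j_l$ for $\tilde p\ge p'+2$ emerge naturally from this recursion: the latter records exactly the steps at which $\Upsilon^*\circ\Phi^*$ did not vanish, and the former is the range constraint from Lemma~\ref{lem:homotopy cal} at the step where the pairing occurs. The $\tilde h^{(j)}$ bookkeeping you describe is correct in spirit, but it arises from the reordering performed by $\Upsilon^*\circ\Phi^*$, not from commuting operators past group elements.
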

\begin{proof}Assume $j_l\leq i_p<j_{l+1}$ for some $1\leq l<q$ or $j_q\leq i_p$. If not, we consider $i_{p-1}$ and so on. Let
\begin{align*}
\alpha&=a_1g^{(1)}\cdots g^{(i_1-1)}\p^{g}_{i_1}g^{(i_1+1)}\cdots g^{(i_p-1)}\\ \beta&=\p^{g}_{i_p}g^{(i_p+1)}\cdots g^{(N)}\cup a_2h^{(1)}\cdots h^{(j_1-1)}\p^{h}_{j_1}h^{(j_1+1)}\cdots
\p^{h}_{j_q}h^{(j_q+1)}\cdots h^{(N)}.
\end{align*}
We apply Lemma \ref{lem:homotopy splitting form} to  $\phi=\alpha\cup \beta$
$$
\mathsf H^*(\phi) = \pm \alpha\cup \mathsf H^*(\beta)+\mathsf H^*\big(\alpha\cup(\Upsilon^*\circ\Phi^*(\beta))\big).
$$
The first term is computed by Lemma \ref{lem:homotopy cal}. In the second term, $\Upsilon^*\circ\Phi^*(\beta)=0$ if $i_p=j_l$; otherwise the role of $\Upsilon^*\circ\Phi^*$ amounts to change terms in $\beta$ into the order
$$
\Upsilon^*\circ\Phi^*(\beta)=\pm \cdots \p^{h}_{j_1}\cdots
\p^{h}_{j_l} \cdots \p_{i_p}^g \cdots \p_{j_l+1}^h\cdots \p_{j_q}^h \cdots.
$$
Now we apply the same process to $\mathsf H^*\big(\alpha\cup(\Upsilon^*\circ\Phi^*(\beta))\big)$ but consider $i_{p-1}$. Recursively we arrive at the lemma.
\end{proof}

\begin{thm}\label{thm:graph summation}Consider the cup product between a $g$-sector and $g^{-1}$-sector
$$
\phi=\Upsilon^*(e_{i_1}\cdots e_{i_p}g)\cup \Upsilon^*(e_{j_1}\cdots e_{j_q}g^{-1})
$$
where $i_1<i_2<\cdots< i_p$ and  $j_1<j_2<\cdots <j_q$. $a_1,a_2\in A$. Then $\mathsf{p} \circ \tilde \Phi^*({\phi})$  is computed as follows.
\begin{description}
\item[(1)] if $p\neq q$, then $\mathsf{p} \circ \tilde \Phi^*({\phi})=0$.
\item[(2)] if $p=q$, consider the following subset of the $p$-th permutation group
$$
   V=\{\sigma\in S_p\mid j_{\sigma(k)}\leq i_{k}, \forall 1\leq k\leq p\}.
$$
Let $g^{(i,j)}=g^{(i)}g^{(i+1)}\cdots g^{(j)}$ for $i\leq j$. Then $\mathsf{p} \circ \tilde \Phi^*({\phi})$ equals
$$
(-1)^{{p(p-1)\over 2}}\sum_{\sigma\in V} \sgn{\sigma} g^{(j_{\sigma(1)},i_1-1)} \p_{i_1,j_{\sigma(1)}}^{g,g^{-1}}(W) (g^{-1})^{(j_{\sigma(1)}+1,i_1)}\cdots g^{(j_{\sigma(p)},i_p-1)} \p_{i_p,j_{\sigma(p)}}^{g,g^{-1}}(W) (g^{-1})^{(j_{\sigma(p)}+1,i_p)}
$$
as elements in $\jac(W)$.
\end{description}
\end{thm}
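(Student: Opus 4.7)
The plan is to unpack $\mathsf{p}\circ\tilde{\Phi}^*(\phi)$ via the defining series $\tilde\Phi^* = \Phi^* - \Phi^*(\id + \df_W\mathsf{H}^*)^{-1}\df_W\mathsf{H}^*$. Since $\phi$ is concentrated in Hochschild degree $p+q$ and $\mathsf{p}\circ\Phi^*$ keeps only the degree-$0$ Koszul component, the only summand that survives is $k=(p+q)/2$, giving
\[
  \mathsf{p}\tilde\Phi^*(\phi) = (-1)^{(p+q)/2}\bigl[(\df_W\mathsf{H}^*)^{(p+q)/2}\phi\bigr]
\]
in $\jac(W)$ (and $0$ when $p+q$ is odd). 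Unfolding $\Upsilon^*$ writes $\phi$ in the shape of \eqref{eq:usual version cochain} with $a_1=a_2=1$ and $h=g^{-1}$: a cup product of $p+q$ first-order quantum differential operators interspersed with group-element $0$-cochains. This is precisely the setting of the reorganization formula \eqref{eq:homotopy of usual version cochain}.

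Each application of $\df_W\mathsf{H}^*$ is computed inductively using \eqref{eq:homotopy of usual version cochain}: $\mathsf{H}^*$ pairs some $\p^g_{i_{p'+1}}$ from the $g$-factor with some $\p^{g^{-1}}_{j_k}$ from the $g^{-1}$-factor (subject to $j_k\leq i_{p'+1}$) into a second-order operator $\p^{g,g^{-1}}_{i_{p'+1},j_k}$, reshuffling the flanking group-element $0$-cochains into the $\tilde h^{(\cdot)}$ form. Applying $\df_W$ then substitutes $W$ into one slot of that new second-order operator, reducing total degree by $2$ and returning the expression to the shape of \eqref{eq:usual version cochain} with one fewer first-order operator on each side (plus a global prefactor collecting the $\p^{g,g^{-1}}(W)$-factor sandwiched by its flanking group elements). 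This sets up a clean induction on $\min(p,q)$.

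For the vanishing when $p\neq q$: each $\df_W\mathsf{H}^*$ step consumes exactly one $\p^g$ and one $\p^{g^{-1}}$, so after $\min(p,q)$ iterations the smaller side is exhausted; the leftover $|p-q|$ first-order operators of the remaining type then sit in strictly ascending order (with $0$-cochain insertions), and Lemma \ref{lem:vanishing condition for homotopy}(c) kills any further $\mathsf{H}^*$. The iteration stalls above degree $0$, so $\mathsf{p}$ sends the result to zero. For $p=q$: iterating $p$ steps records a pairing $(i_k,j_{\sigma(k)})$ at each step, each constrained by $j_{\sigma(k)}\leq i_k$, and summing over all outcomes parametrizes $\sigma\in V$. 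The group blocks $g^{(j_{\sigma(k)},i_k-1)}$ and $(g^{-1})^{(j_{\sigma(k)}+1,i_k)}$ sandwiching each $\p^{g,g^{-1}}_{i_k,j_{\sigma(k)}}(W)$ are precisely the residual $\tilde h$-reshuffled $0$-cochains read off from \eqref{eq:homotopy of usual version cochain}; the sign $\sgn{\sigma}$ comes from permuting the $p$ degree-$1$ pairings into canonical order; and $(-1)^{p(p-1)/2}$ combines the $(-1)^{(p+q)/2}$ prefactor with the cumulative Koszul signs from commuting first-order operators past one another during the induction.

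The main obstacle is the combinatorial bookkeeping through the $p$ iterations: checking at each inductive step that the residual expression remains of the form \eqref{eq:usual version cochain} (only the group elements are updated, so the same reorganization rule applies at every step), verifying that the surviving index data assembles exactly into a permutation $\sigma\in V$, and tracking the Koszul signs and group-element prefactors so that they match the stated closed form. All three subtasks follow by a careful but mechanical induction, with the single genuine input being the reorganization formula \eqref{eq:homotopy of usual version cochain} together with the vanishing rule of Lemma \ref{lem:vanishing condition for homotopy}(c).
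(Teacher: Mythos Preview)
Your overall strategy matches the paper's: reduce to $(-1)^p(\df_W\mathsf{H}^*)^p\phi$ via \eqref{formula-homotopy-cup-product}, then iterate using the reorganization formula \eqref{eq:homotopy of usual version cochain}. But there is a genuine gap in your inductive step.

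You write that after $\mathsf{H}^*$ produces a second-order operator $\p^{g,g^{-1}}_{i_{p'+1},j_k}$, ``applying $\df_W$ then substitutes $W$ into one slot of that new second-order operator.'' This is not what $\df_W$ does: $\df_W(\psi)=\psi\{m_0\}$ inserts $W$ into \emph{any one} of the input slots of the cochain, not only into the second-order operator. So at each step there are two families of terms:
\begin{itemize}
\item[(Case 2)] $W$ lands in the second-order slot. This is the branch you describe: the second-order operator is evaluated on $W$, and the result has the shape \eqref{eq:usual version cochain} again (Lemma \ref{lem:homotopy cal} explains why), ready for the next iteration.
\item[(Case 1)] $W$ lands in a first-order slot $\p^{g}_{i_\ell}$ or $\p^{g^{-1}}_{j_\ell}$. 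Now the second-order operator $\p^{g,g^{-1}}_{i_{p'+1},j_k}$ \emph{survives} in the cochain. These terms are not of the form \eqref{eq:usual version cochain}, so your induction hypothesis does not apply to them.
\end{itemize}
The paper kills the Case 1 terms at the \emph{next} application of $\mathsf{H}^*$, using Lemma \ref{lem:vanishing condition for homotopy}(a),(b): a cochain ending in a second-order operator, or one in which the second-order operator is followed by first-order operators with strictly increasing indices, is annihilated by $\mathsf{H}^*$. You never invoke parts (a) or (b) of that lemma, only part (c) for the $p\neq q$ endgame, so your argument as written does not explain why the Case 1 branches contribute nothing. Without this, the claim that ``each $\df_W\mathsf{H}^*$ step consumes exactly one $\p^g$ and one $\p^{g^{-1}}$'' is unjustified, and both your $p\neq q$ vanishing argument and your $p=q$ enumeration of $\sigma\in V$ rest on it.

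The fix is exactly what the paper does: at each step, split the output of $\df_W$ according to which slot receives $W$, observe that only Case 2 survives the subsequent $\mathsf{H}^*$ by Lemma \ref{lem:vanishing condition for homotopy}(a),(b), and then proceed with your induction on the Case 2 terms alone.
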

\begin{proof} We consider the case $p=q$. The case for $p\neq q$ follows by the same steps.
Explicitly,
\begin{align}
\phi=&a_1g^{(1)}\cdots g^{(i_1-1)}\p^{g}_{i_1}g^{(i_1+1)}\cdots \p^{g}_{i_p}g^{(i_p+1)}\cdots g^{(N)}\nonumber\\
\quad & \cup a_2(g^{-1})^{(1)}\cdots (g^{-1})^{(j_1-1)}\p^{g^{-1}}_{j_1}(g^{-1})^{(j_1+1)}\cdots
\p^{g^{-1}}_{j_p}(g^{-1})^{(j_p+1)}\cdots (g^{-1})^{(N)}
\end{align}
and by \eqref{formula-homotopy-cup-product}
$$
\mathsf{p} \circ \tilde \Phi^*({\phi})=(-1)^p(\df_W\mathsf{H}^*)^p\phi.
$$
$\mathsf H^*(\phi)$ is computed by \eqref{eq:homotopy of usual version cochain}. We consider the action of $\df_W$ on each term in \eqref{eq:homotopy of usual version cochain}. There are two cases:
\begin{description}
  \item[Case 1] $W$ is acted on by some first order quantum differential operators. In this case, there will be a second order quantum differential
      operator left in the cochain. If we take a further homotopy $\mathsf H^*$, it will
      become zero by {Lemma \ref{lem:vanishing condition for homotopy}(a),(b)}.
  \item[Case 2] $W$ is acted on by some second order quantum differential operators. In this case,
 a further homotopy $\mathsf H^*$
  will bring the cochain back to the form of (\ref{eq:usual version cochain}) by {Lemma \ref{lem:homotopy cal}}. The relevant second order differential operator is combined by  $\p^{g}_{i_{(p'\!+1)}}$ and $\p^{h}_{j_k}$ where $i_{(p'\!+1)}\geqslant j_k$.  \end{description}
 Repeating the above steps we arrive at the theorem.
\end{proof}

\begin{ex}For $g|_1\p^g_{2}g|_3\p^g_{4}\p^g_5\cup \p^h_{1}h|_2\p^h_3\p^h_4h|_5$ with $g = h^{-1}$, there are two relevant index maps

\[\tikz[baseline=1]{
\draw[thick] (1,1) -- (0,0);
\draw[thick] (4,1) -- (3,0);
\draw[thick] (3,1) -- (2,0);
\filldraw[fill=black] (1,1) node[above,font=\tiny] {$2$} circle (1pt);
\filldraw[fill=black] (4,1) node[above,font=\tiny] {$5$} circle (1pt);
\filldraw[fill=black] (3,1) node[above,font=\tiny] {$4$} circle (1pt);
\filldraw[fill=white] (0,0) node[below,font=\tiny] {$1$} circle (1pt);
\filldraw[fill=white] (3,0) node[below,font=\tiny] {$4$} circle (1pt);
\filldraw[fill=white] (2,0) node[below,font=\tiny] {$3$} circle (1pt);
}\quad\text{, and }\quad
\tikz[baseline = 1]{
\draw[thick] (1,1) -- (0,0);
\draw[thick] (4,1) -- (2,0);
\draw[thick] (3,1) -- (3,0);
\filldraw[fill=black] (1,1) node[above,font=\tiny] {$2$} circle (1pt);
\filldraw[fill=black] (4,1) node[above,font=\tiny] {$5$} circle (1pt);
\filldraw[fill=black] (3,1) node[above,font=\tiny] {$4$} circle (1pt);
\filldraw[fill=white] (0,0) node[below,font=\tiny] {$1$} circle (1pt);
\filldraw[fill=white] (3,0) node[below,font=\tiny] {$4$} circle (1pt);
\filldraw[fill=white] (2,0) node[below,font=\tiny] {$3$} circle (1pt);
}.
\]
The first one contributes
$$
-g^{(1)} \p_{2,1}^{g,g^{-1}}(W)(g^{-1})^{(2)}g^{(3)}\p_{4,3}^{g,g^{-1}}(W)\p_{5,4}^{g,g^{-1}}(W)(g^{-1})^{(5)}.
$$
The second one contributes
$$
g^{(1)} \p_{2,1}^{g,g^{-1}}(W) (g^{-1})^{(2)} \p_{4,4}^{g,g^{-1}}(W)g^{(3)}g^{(4)}\p_{5,3}^{g,g^{-1}}(W)(g^{-1})^{(4)}(g^{-1})^{(5)}.
$$
\end{ex}

\subsection{Invertible polynomials and cup product formula}
As an application of the method we have developed, we give explicit cup product formula for a general invertible polynomial $W$ and a finite group $G\subseteq G_W$. We can write $W = W_1 \oplus W_2 \oplus \cdots \oplus W_r$
and $G = G_1\times G_2 \times \cdots \times G_r$, where $W_k$ is an elementary invertible polynomial with dimension $N_k$ and $G_k$ is a
finite subgroup of $G_{W_k}$. Also $A=A_1\otimes \cdots A_r$ is a tensor product of elementary ones.

First of all, we have the following K\"unneth type formula.

\begin{prop}\label{prop-kunneth} As $\Z/2\Z$-graded algebras
$$
    \mathsf{HH}_c(A_W,A_W[G])= \mathsf{HH}_c((A_1)_{W_1},(A_1)_{W_1}[G_1])\otimes \cdots \otimes  \mathsf{HH}_c((A_r)_{W_r},(A_r)_{W_r}[G_r])
$$
where the tensor product on the right hand side is the graded tensor product.
\end{prop}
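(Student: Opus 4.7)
The plan is to reduce Proposition \ref{prop-kunneth} to two ingredients already established: the identification of sectors with Jacobi rings (Theorem \ref{thm: sectors equals Jacobi rings}) and the explicit cup product formula (Theorem \ref{thm:graph summation}). Both respect the block decomposition induced by $W = W_1 \oplus \cdots \oplus W_r$ and $G = G_1 \times \cdots \times G_r$, and together with the cyclic module structure of Lemma \ref{lem-generator-1g} they will give the full algebra isomorphism.

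First, at the $\Z/2\Z$-graded vector space level, any $g \in G$ factors uniquely as $g = (g_1, \ldots, g_r)$ with $g_k \in G_k$. Since $W$ is a sum of invertible polynomials on disjoint sets of variables, $\fix(g) = \prod_k \fix(g_k)$, $N - N_g = \sum_k (N_k - N_{g_k})$, and $W_g = \sum_k W_{k, g_k}$ is a direct sum of isolated singularities. The classical K\"unneth isomorphism for Milnor rings then gives
$$\jac(W_g) \cong \bigotimes_{k=1}^r \jac(W_{k, g_k}),$$
and combining this with Theorem \ref{thm: sectors equals Jacobi rings} and summing over all sectors $g \leftrightarrow (g_1, \ldots, g_r)$ yields a graded vector space isomorphism between the two sides of Proposition \ref{prop-kunneth}.

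To promote this to an isomorphism of algebras, I invoke Theorem \ref{thm:graph summation} together with Lemma \ref{lem-generator-1g}. Since $H_g$ is cyclic over $H_\e$ via $\alpha \cup \mathfrak{1}_g = \Pi_g(\alpha) \mathfrak{1}_g$, and the restriction $\Pi_g$ manifestly factorizes as $\bigotimes_k \Pi_{g_k}$ under $\jac(W) = \bigotimes_k \jac(W_k)$, it suffices to check that the pairings $\mathfrak{1}_g \cup \mathfrak{1}_{g^{-1}} \in H_\e$ factorize over blocks. Theorem \ref{thm:graph summation} expresses these as sums indexed by permutations $\sigma \in V$ of products of second-order quantum differentials $\p_{i_k, j_{\sigma(k)}}^{g, g^{-1}}(W)$. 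Because $W = \sum_k W_k$ and each $W_k$ depends only on block-$k$ variables, $\p_{i, j}^{g, g^{-1}}(W)$ vanishes whenever $x_i$ and $x_j$ lie in different blocks: the inner operator $\p_j^{g^{-1}}$ produces a polynomial in a single block's variables, on which the outer $\p_i^g$ acts trivially when $x_i$ is outside that block. Consequently only block-preserving $\sigma \in V$ contribute, each such $\sigma$ factors uniquely as $\sigma = \sigma_1 \times \cdots \times \sigma_r$ with $\sigma_k$ in the block-$k$ analogue of $V$, and the summation splits as a product of per-block summations. Each factor is precisely the cup product formula of Theorem \ref{thm:graph summation} applied to $(W_k, G_k)$.

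The main technical obstacle I anticipate is sign bookkeeping: the overall prefactor $(-1)^{p(p-1)/2}$ and the factor $\sgn{\sigma}$ in Theorem \ref{thm:graph summation} must reproduce the Koszul sign incurred when shuffling the odd generators $e_{i_1}, \ldots, e_{i_p}$ from their total-order positions across all blocks into block-wise groupings inside the graded tensor product. Since $N - N_g = \sum_k (N_k - N_{g_k})$ is additive, parities behave correctly, and the shuffle sign separating $\sigma$ into its block components $\sigma_k$ should exactly produce the Koszul sign of the graded tensor product; verifying this reduces to a standard shuffle-sign identity that I expect to carry through uniformly, using the case-by-case sector descriptions in Examples \ref{eg-fermat-type}--\ref{eg-chain-type} as a sanity check.
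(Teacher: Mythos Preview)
Your approach diverges from the paper's, and it has a real gap. The paper's proof is a two-line appeal to Theorem \ref{thm: sectors equals Jacobi rings} and the classical K\"unneth decomposition $\jac(W)=\bigotimes_k\jac(W_k)$; the algebra structure is implicitly handled by the fact that the Koszul complex $K^\bullet(A,A[G])$ with $\p_K+\tilde\df_W$, together with the transfer maps $\Upsilon^*,\Phi^*$, manifestly splits as a tensor product over the blocks, so the induced cup product on cohomology is a graded tensor product of the block cup products. No explicit cup product formula is invoked.

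Your argument, by contrast, tries to verify multiplicativity using Theorem \ref{thm:graph summation}. The problem is the step where you assert that ``it suffices to check that the pairings $\mathfrak 1_g\cup\mathfrak 1_{g^{-1}}$ factorize over blocks.'' This is not sufficient. By Lemma \ref{lem-generator-cup} the algebra is determined by \emph{all} products $\mathfrak 1_g\cup\mathfrak 1_h$, not just those with $h=g^{-1}$; knowing the $H_\e$-module structure on each $H_g$ together with the $g,g^{-1}$ pairings does not pin down $\mathfrak 1_g\cup\mathfrak 1_h$ for general $g,h$. Theorem \ref{thm:graph summation} as stated only covers the $g,g^{-1}$ case, so you have no tool available to handle the remaining products. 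You also cannot appeal to the vanishing in Theorem \ref{thm: cup product formula for General invertible polynomial}(1), since that is proved \emph{after} Proposition \ref{prop-kunneth} and for elementary $W$ only, so invoking it here would be circular.

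The cleanest fix is to abandon the explicit-formula route and argue directly that the bar (or Koszul) complex of $A=\bigotimes_k A_k$ with coefficients in $A[G]=\bigotimes_k A_k[G_k]$ carries a tensor-product dga structure compatible with $\p_H+\df_W$ and with $\cup$; then the vector-space K\"unneth isomorphism you already established is automatically an algebra map.
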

\begin{proof} This is a direct consequence of Theorem \ref{thm: sectors equals Jacobi rings} and the fact that
$$
  \jac(W)=\jac(W_1)\otimes \cdots \otimes \jac(W_r).
$$

\end{proof}

This proposition reduces the problem to an elementary invertible polynomial only. Let
$$
\mathfrak{1}_g\in \mathsf{HH}_c(A_W, A_W g)
$$
be the generator defined in \eqref{eq: generator}.

\begin{lem}\label{lem-generator-cup} The cup product on $\hch_c(A_W, A_W[G])$ is completely determined by the generators
$$
\mathfrak{1}_g \cup \mathfrak{1}_h, \quad g,h\in G.
$$
\end{lem}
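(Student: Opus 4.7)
The plan is to reduce every product $\alpha_g\cup\alpha_h$ with $\alpha_g\in H_g$, $\alpha_h\in H_h$ to an $H_\e$-linear multiple of the generator product $\mathfrak{1}_g\cup\mathfrak{1}_h$, so that once the latter is specified the entire ring structure is determined. First, I would use Lemma \ref{lem-generator-1g} together with the surjectivity of $\Pi_g\colon\jac(W)\to\jac(W_g)$ (which holds because $W$ has an isolated singularity at the origin) to write
$$
\alpha_g=[\Theta_\e(f_g)]\cup\mathfrak{1}_g,\qquad \alpha_h=[\Theta_\e(f_h)]\cup\mathfrak{1}_h
$$
for suitable lifts $f_g,f_h\in\jac(W)$. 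Associativity of $\cup$ then turns the product into $[\Theta_\e(f_g)]\cup\bigl(\mathfrak{1}_g\cup[\Theta_\e(f_h)]\bigr)\cup\mathfrak{1}_h$.

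Next I would commute $[\Theta_\e(f_h)]$ past $\mathfrak{1}_g$. The twisted commutativity of Theorem \ref{thm: dga on G-twisted Hochschild} applied to this pair has $g_2=\e$, and by Theorem \ref{thm: sectors equals Jacobi rings} the identity-sector class $[\Theta_\e(f_h)]$ sits in even $\Z/2\Z$-degree, so both the sign $(-1)^{|\mathfrak{1}_g||\Theta_\e(f_h)|}$ and the $G$-twist $(\e^{-1})^*$ are trivial, yielding $\mathfrak{1}_g\cup[\Theta_\e(f_h)]=[\Theta_\e(f_h)]\cup\mathfrak{1}_g$. Since the cup product on $H_\e$ is the ordinary multiplication of $\jac(W)$ under $\Theta_\e$, I arrive at the reduction
$$
\alpha_g\cup\alpha_h=[\Theta_\e(f_gf_h)]\cup(\mathfrak{1}_g\cup\mathfrak{1}_h).
$$

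Finally, writing the assumed-known generator product as $\mathfrak{1}_g\cup\mathfrak{1}_h=[\Theta_{gh}(C_{g,h})]$ for some $C_{g,h}\in\jac(W_{gh})$, a further application of Lemma \ref{lem-generator-1g} (after lifting $C_{g,h}$ to an arbitrary preimage in $\jac(W)$, the outcome being independent of this choice) produces
$$
\alpha_g\cup\alpha_h=\bigl[\Theta_{gh}\bigl(\Pi_{gh}(f_gf_h)\cdot C_{g,h}\bigr)\bigr],
$$
which expresses the general product entirely in terms of the data $\{\mathfrak{1}_g\cup\mathfrak{1}_h\}$ and the $\jac(W)$-module structure on the various $\jac(W_g)$. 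The only delicate step is the commutation of the identity-sector class past $\mathfrak{1}_g$, and the even parity of $\Theta_\e(f_h)$ makes this painless; everything else is bookkeeping within the module and algebra structures already recorded in Theorem \ref{thm: sectors equals Jacobi rings}, Theorem \ref{thm: dga on G-twisted Hochschild}, and Lemma \ref{lem-generator-1g}.
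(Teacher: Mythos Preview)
Your argument is correct and is exactly the content behind the paper's one-line proof, which simply reads ``This follows from Lemma~\ref{lem-generator-1g}.'' You have spelled out the implicit step that the paper omits: once each $H_g$ is a cyclic $H_\e$-module, one still needs the commutation $\mathfrak{1}_g\cup[\Theta_\e(f)]=[\Theta_\e(f)]\cup\mathfrak{1}_g$ to reduce a general product to an $H_\e$-multiple of $\mathfrak{1}_g\cup\mathfrak{1}_h$, and you justify this correctly via twisted commutativity with $g_2=\e$ and the even parity of $\Theta_\e(f)$.
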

\begin{proof}This follows from Lemma \ref{lem-generator-1g}.

\end{proof}

\begin{defn}\label{defn-Hession}
 Let $W(x_i)$ be an elementary invertible polynomial. Let $g\neq \e\in G_W$, $g(x_i)=\lambda_i x_i$ and $l_g=\sharp\{\lambda_i \mid \lambda_i\neq 1\}$. We define the $g$-twisted Hessian, denoted by $\hess^g(W)$, as follows.
\begin{description}
\item[(a)] If $W = x_1^n$ is of Fermat type, then
\begin{equation*}
\hess^{g}(W)=\f{n}{1-\lambda_1}x_1^{n-2}.
\end{equation*}
\item[(b)] If $W=x_{1}^{n_1}x_2+x_2^{n_2}x_3+\cdots + x_N^{n_N}x_1$ is of loop type, then
  \begin{equation*}
  \hess^{g}(W)= \f{(-1)^{N+1}+n_1n_2\cdots
n_N}{(1-\lambda_1)(1-\lambda_2)\cdots(1-\lambda_N)}x_1^{n_1-1}x_2^{n_2-1}\cdots x_N^{n_N-1}.
  \end{equation*}
  Note that $l_g=N$ for any $g\neq e$ in the loop case.
\item[(c)] If $W= x_{1}^{n_1}x_2+x_2^{n_2}x_3+\cdots + x_N^{n_N}$ is of chain type, then
   \begin{equation*}
      \hess^g(W) =\begin{cases}\f{n_1n_2\cdots
      n_{l_g}}{(1-\lambda_1)\cdots(1-\lambda_{l_g})}x_1^{n_1-2}x_2^{n_2-1}\cdots x_{l_g}^{n_{l_g}-1}x_{l_g+1} & \text{if}\ l_g<N  \\
      \f{n_1n_2\cdots
      n_{N}}{(1-\lambda_1)\cdots(1-\lambda_{N})} x_1^{n_1-2}x_2^{n_2-1}\cdots x_{N}^{n_N-1} & \text{if}\ l_g=N
      \end{cases}
      \end{equation*}
Note that $\lambda_{l_g+1}=\cdots =\lambda_{N}=1$ for any $g\neq e$ in the chain case.
\end{description}
\end{defn}

Our main theorem in this section is the following.

\begin{thm}\label{thm: cup product formula for General invertible polynomial}
Let $A$ be a polynomial ring and $W$ be an elementary invertible polynomial. Let $G\subseteq G_W$. Then  we have the following explicit cup product formula on $\mathsf{HH}_c(A_W,A_W[G])$.
\begin{description}
\item[(1)]  For $g_1,g_2 \in G$ and $g_1,g_2\neq \e, g_2\neq g_1^{-1}$,
\begin{align}\mathfrak{1}_{g_1}\cup \mathfrak{1}_{g_2}=0.\end{align}
\item[(2)] For $g\in G$ and $g\neq \e$,
\begin{align}\mathfrak{1}_g\cup \mathfrak{1}_{g^{-1}} = (-1)^{\f{l_g(l_g-1)}{2}}\hess^g(W)\mathfrak{1}_{\e}.
\end{align}
Here $l_g$ and $\hess^g(W)$ are defined in Definition \ref{defn-Hession}.
\end{description}
By Proposition \ref{prop-kunneth} and Lemma \ref{lem-generator-cup}, this determines the full cup product for all invertible polynomials.
\end{thm}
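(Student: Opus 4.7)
The plan is to reduce the theorem to its content on an elementary invertible polynomial and then compute $\mathfrak{1}_{g_1}\cup \mathfrak{1}_{g_2}$ directly. The K\"unneth-type decomposition of Proposition \ref{prop-kunneth} reduces the general case to a single elementary factor, and Lemma \ref{lem-generator-cup} reduces further to pairwise products of the generators. Via the Koszul isomorphism $\Theta_g$ of Lemma \ref{lem:polyvector field and hochschild coh}, the generator is realized as $\mathfrak{1}_g = e_{\bm{I}_g}g \in K^{l_g}(A,Ag)$, and the cup product becomes
$\mathsf{p}\circ\tilde\Phi^*\bigl(\Upsilon^*(e_{\bm{I}_{g_1}}g_1)\cup \Upsilon^*(e_{\bm{I}_{g_2}}g_2)\bigr)$
by formula \eqref{cup-product-formula}, where $\mathsf{p}$ projects onto the $\mathsf{PV}^0$-component of the $(g_1g_2)$-sector.

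Case (2) follows from a direct application of Theorem \ref{thm:graph summation} with $\bm{I}=\bm{J}=\bm{I}_g$ and $p=q=l_g$. The combinatorial set $V$ of admissible permutations reduces to $\{\mathrm{id}\}$: the condition $j_{\sigma(k)}\le i_k$ together with the strict ordering of $\bm{I}_g$ forces $\sigma(k)\le k$ for all $k$, and the only such permutation is the identity. For $\sigma=\mathrm{id}$, each group factor $g^{(j_{\sigma(k)},i_k-1)}$ and $(g^{-1})^{(j_{\sigma(k)}+1,i_k)}$ is an empty product equal to $1$, so the cup product reduces to
\[
\mathfrak{1}_g\cup \mathfrak{1}_{g^{-1}}\;=\;(-1)^{l_g(l_g-1)/2}\Bigl(\prod_{k=1}^{l_g}\p^{g,g^{-1}}_{i_k,i_k}(W)\Bigr)\,\mathfrak{1}_{\e}.
\]
It remains to verify $\prod_k \p^{g,g^{-1}}_{i_k,i_k}(W)=\hess^g(W)$ in $\jac(W)$ for each of the three elementary types, using the explicit formula of Example \ref{ex-second-order} for $\p^{g,g^{-1}}_{i,i}(x_i^n)$ together with the invariance constraints $\lambda_1^n=1$ (Fermat) or $\lambda_k^{n_k}\lambda_{k+1}=1$ (Loop/Chain).

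For case (1), I would argue vanishing in two steps. First, a degree count: $\tilde\Phi^*$ on $C^{l_{g_1}+l_{g_2}}$ produces components only in Koszul degrees $l_{g_1}+l_{g_2}-2m$ with $m\ge 0$, while the target generator $\mathfrak{1}_{g_1g_2}$ sits in Koszul degree $l_{g_1g_2}$; parity vanishing therefore applies whenever $l_{g_1}+l_{g_2}-l_{g_1g_2}$ is odd, which covers the Fermat case and the Loop case with $N$ odd. When the parity is even, the analogue of Theorem \ref{thm:graph summation} adapted to the $(g_1,g_2)$-sector expresses the class as a sum over admissible index-pairings between $\bm{I}_{g_1}$ and $\bm{I}_{g_2}$; the structure of the moving indices combined with $g_1g_2\ne \e$ and Lemma \ref{lem:vanishing condition for homotopy} eliminates every surviving term.

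The main obstacle will be the verification $\prod_k \p^{g,g^{-1}}_{i_k,i_k}(W)=\hess^g(W)$ in the Loop case. Here only the monomial $x_k^{n_k}x_{k+1}$ contributes to $\p^{g,g^{-1}}_{k,k}(W)$ (the neighbor monomial $x_{k-1}^{n_{k-1}}x_k$ is killed by the second-order operator in $x_k$), and the full product takes the form $C\cdot x_1^{n_1-1}\cdots x_N^{n_N-1}$ with $C=\prod_k\bigl([n_k]_{\lambda_k}-n_k\bigr)(\lambda_k-1)^{-1}$. Using the cyclic invariance $\lambda_k^{n_k}\lambda_{k+1}=1$ to rewrite $[n_k]_{\lambda_k}=(\lambda_{k+1}^{-1}-1)/(\lambda_k-1)$ and a telescoping manipulation, the constant $C$ simplifies to the claimed $\bigl((-1)^{N+1}+n_1n_2\cdots n_N\bigr)/\prod_k(1-\lambda_k)$. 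The Chain case is a minor variation of the Loop telescoping restricted to the first $l_g$ indices, with boundary effects of the terminal Fermat-type monomial $x_N^{n_N}$ supplying the difference in the numerator.
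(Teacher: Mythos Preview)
Your approach to Part (2) has a genuine gap in the Loop and Chain cases: the element $e_{\bm{I}_g}g$ is \emph{not} a $(\p_K+\tilde{\df}_W)$-closed Koszul cochain when $l_g\ge 2$. The isomorphism $\Theta_g$ of Lemma~\ref{lem:polyvector field and hochschild coh} only identifies $\p_K$-cohomology; to represent $\mathfrak{1}_g$ under the mixed differential you must extend $\kappa_0^g=e_{\bm{I}_g}g$ by lower-degree correction terms $\kappa_{-2}^g,\kappa_{-4}^g,\ldots$ satisfying $\p_K(\kappa^g_{-2k-2})+\tilde{\df}_W(\kappa^g_{-2k})=0$. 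The paper constructs these explicitly (Lemma~\ref{lem:loop closed cochain} and its chain analogue), and applying formula~\eqref{cup-product-formula} to the full representative produces many additional graph contributions (edges of type ``$++$'' and ``$--$'' in the paper's Lemma~\ref{lem:loop graph summation}) beyond the single $\sigma=\id$ term you isolate. A quick numerical check already shows the discrepancy: for the $N=2$ loop $W=x_1^2x_2+x_2^2x_1$ with $\lambda_1=\lambda_2=\omega$ a primitive cube root, your product $\prod_k\frac{[n_k]_{\lambda_k}-n_k}{\lambda_k-1}$ equals $1$, while the correct constant $\frac{-1+n_1n_2}{(1-\lambda_1)(1-\lambda_2)}=\frac{3}{(1-\omega)^2}$ does not; so the claimed telescoping identity is false. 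The paper instead organizes all graph contributions into $\det\mathrm{H}^g_W$ (formulas~\eqref{det-formula-loop} and~\eqref{det-formula-chian}) and simplifies the determinant in $\jac(W)$.

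For Part (1), your parity argument is fine in the Fermat case, but your sketch for the remaining cases (``the structure of the moving indices combined with $g_1g_2\ne\e$ and Lemma~\ref{lem:vanishing condition for homotopy} eliminates every surviving term'') is not a proof. The paper avoids any Hochschild computation here and instead uses $G_W$-equivariance of $\cup$ together with~\eqref{eq:group action 2-cocycle'}: one finds an $h\in G_W$ with $\rho_{h,g_1}\rho_{h,g_2}\neq\rho_{h,g_1g_2}$, forcing $\mathfrak{1}_{g_1}\cup\mathfrak{1}_{g_2}=0$. This is both shorter and more robust than a direct combinatorial attack.
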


\begin{proof}(1) By Proposition \ref{lem:subgp lem}, we can assume $G=G_W$. Let $g_1,g_2 \in G_W$ and $g_1,g_2\neq \e, g_2\neq g_1^{-1}$.
\begin{description}
\item[(a)] $W = x_1^n$ is of Fermat type. By Example \ref{eg-fermat-type}, $\mathfrak{1}_{g_1}$ and $\mathfrak{1}_{g_2}$ have both odd parity, therefore $\mathfrak{1}_{g_1}\cup \mathfrak{1}_{g_2}$ is an even element in the $g_1g_2$-sector. Since $g_1g_2\neq 1$,  the $g_1g_2$-twisted sector is generated by $1_{g_1g_2}$ which is odd. It follows that
$$
\mathfrak{1}_{g_1}\cup \mathfrak{1}_{g_2}=0.
$$

\item[(b)] $W={x_{1}}^{n_1}x_2+{x_2}^{n_2}x_3+\cdots + {x_N}^{n_N}x_1$ is of loop type. Then $l_{g_1} = l_{g_2} = l_{g_1g_2} = N$. By the $G_W$-equivariance of the cup product, we have
\begin{equation*}
h^*(\mathfrak{1}_{g_1} \cup \mathfrak{1}_{g_2}) = h^*(\mathfrak{1}_{g_1}) \cup h^*(\mathfrak{1}_{g_2}),\quad \forall g\in G_W,
\end{equation*}
i.e.
\begin{equation*}
\chi(h)^{-1} (\mathfrak{1}_{g_1} \cup \mathfrak{1}_{g_2}) = \chi(h)^{-2}(\mathfrak{1}_{g_1} \cup \mathfrak{1}_{g_2}),
\end{equation*}
where $\chi(h)$ is defined in (\ref{eq: character}).
Since $G_W\bigcap \mathrm{SL}(N;\mathbb{C})$ is a proper subgroup of $G_W$ (see {Proposition 20} in \cite{BTW16}), we can always find
an element $h$ such that $\chi(h) \neq 1$. It follows that
$
\mathfrak{1}_{g_1} \cup \mathfrak{1}_{g_2} = 0.
$
\item[(c)] $W= {x_{1}}^{n_1}x_2+{x_2}^{n_2}x_3+\cdots + {x_N}^{n_N}$ is of chain type. $x_1$ has nontrivial weight under $g_1, g_2$ and $g_1g_2$. Consider
\[h=\text{diag} (e^{2\pi \sqrt{-1}\over n_1},1,1\cdots 1)\in G_W.\]
The equation
$
h^*(\mathfrak{1}_{g_1} \cup \mathfrak{1}_{g_2}) = h^*(\mathfrak{1}_{g_1}) \cup h^*(\mathfrak{1}_{g_2})
$
implies that
\begin{equation*}
\exp\big(-\f{2\pi\sqrt{-1}}{n_1}\big) (\mathfrak{1}_{g_1} \cup \mathfrak{1}_{g_2}) =
\exp\big(-\f{4\pi\sqrt{-1}}{n_1}\big)(\mathfrak{1}_{g_1} \cup \mathfrak{1}_{g_2}).
\end{equation*}
Since $n_1>1$, we conclude $
\mathfrak{1}_{g_1} \cup \mathfrak{1}_{g_2} = 0.
$
\end{description}
The rest of this paper is devoted to prove part (2) of this theorem.

\end{proof}

\begin{rem}
We follow the argument in \cite{BTW16} in the proof of part (1) above.
\end{rem}

\subsubsection*{Fermat type}
$W = x_1^n$ and $G = \lra{\sigma}\cong \mathbb{Z}/n\mathbb{Z}$, where $n\geqslant 2$ and $\sigma = \exp(\f{2\pi\sqrt{-1}}{n})$. For $1\leq k< n$, let us first find the representative of $\mathfrak{1}_{\sigma^k}$ in Koszul cochains. This amounts to extend $e_1\sigma^k$ into a cochain annihilated by $\p_K+ \df_W$.  It is easy to see that $\df_W (e_1\sigma^k)=0$, hence
$$
\mathfrak{1}_{\sigma^k}=e_1\sigma^k
$$
already does the job. Since $\Upsilon^*(e_1\sigma^k)=\p_1^{\sigma^k}$, we find
$$
\mathfrak{1}_{\sigma^k}\cup \mathfrak{1}_{\sigma^{-k}}=\mathsf{p}\tilde{\Phi}^*(\p_1^{\sigma^k} \p_1^{\sigma^{-k}})\stackrel{\text{Thm}\ \ref{thm:graph summation}}{=}\p_{1,1}^{\sigma^k,\sigma^{-k}}(W)\stackrel{\text{Ex}\ \ref{ex-second-order}}{=}{n\over 1-\sigma^k}x^{n-2}_1.
$$
This proves part (2) of Theorem \ref{thm: cup product formula for General invertible polynomial} in the Fermat case.

\subsubsection*{Loop type}
$W=x_{1}^{n_1}x_2+x_2^{n_2}x_3+\cdots + x_N^{n_N}x_1$. Let us first assume $N\geq 3$.

Let $g=\text{diag}(\lambda_1, \cdots, \lambda_N)\in G_W$, $\lambda_i=e^{2\pi \sqrt{-1}q_i}$ where
$$
n_iq_i+q_{i+1}\in \mathbb{Z}, \quad \forall 1\leq i\leq N, \quad \text{where}\quad q_{N+1}\equiv q_1.
$$
We require $g\neq e$, then $0<q_i<1$ for any $i$.  Let us first find the representative of $\mathfrak{1}_{g}$ in Koszul cochains. This amounts to extend $\kappa^g_0=e_1e_2\cdots e_N g$ into $\kappa^g_{0}+\kappa^g_{-2}+\cdots$ where $\kappa^g_{-2k}\in K^{N-2k}(A,Ag)$ satisfying
\begin{equation*} \p_K(\kappa^g_{-2k-2}) + \tilde{\df}_W(\kappa^g_{-2k})=0.\quad \text{Then}\quad \mathfrak{1}_{g}=\Big[\sum_{0\leq 2k\leq N}\kappa_{-2k}^g\Big].
\end{equation*}
For  $\bm{I}=\{i_1<i_2<\cdots i_m\}$, let us denote $e_{\bm{I}}\coloneqq e_{i_1}\cdots e_{i_k}$. In the loop case, we have
\begin{equation}\label{eq:curving diff on kus loop}
\tilde{\df}_W(ae_{\bm{I}}g)= \sum_{k=1}^{p} (-1)^{k-1}ap^g_{i_k}e_{\bm{I}\setminus\{i_k\}}g,
\end{equation}
where
\begin{equation}
p^g_i\coloneqq
\begin{dcases}
x_N^{n_N}+[n_1]_{\lambda_1}x_1^{n_1-1}x_2&\text{if } i=1,\\
\lambda_{i-1}^{n_{i-1}}x_{i-1}^{n_{i-1}}+[n_i]_{\lambda_i}x_i^{n_i-1}x_{i+1}&\text{if }1<i<N,\\
\lambda_{N-1}^{n_{N-1}}x_{N-1}^{n_{N-1}}+\lambda_1[n_N]_{\lambda_N}x_N^{n_N-1}x_1&\text{if }i=N.
\end{dcases}
\end{equation}

Let us consider the following index set
\begin{equation*}
\mathcal{B}_s \coloneqq \left\{i_1<i_2<\cdots <i_s\mid i_{k+1}-i_k>1, \text{ for } 1\leqslant k <s, \text{
and }i_1+N-i_s>1\right\},
\end{equation*}
and introduce the notation
\begin{equation}
b^g_i\coloneqq \begin{dcases}
\f{\lambda_i^{n_i}}{1-\lambda_i}x_i^{n_i-1}&\text{if } 1\leqslant i<N,\\
\f{(-1)^{N-1}}{1-\lambda_N}x_N^{n_N-1}&\text{if } i=N.
\end{dcases}
\end{equation}
For each $\bm{b} = \{i_1<i_2<\cdots<i_s\}\in \mathcal{B}_s$, define
\begin{equation}\bm{I}_{\bm{b}}\coloneqq
\begin{dcases}
\{i_1,i_1+1,i_2,i_2+1,\cdots i_s,i_s+1\}&\text{while } i_s\neq N,\\
\{1,i_1,i_1+1,i_2,i_2+1,\cdots i_s\} &\text{while } i_s=N.
\end{dcases}
\end{equation}
\begin{lem}\label{lem:loop closed cochain}
Let $\bm{I}_g=\{1,2,\cdots, N\}$ and define
\begin{equation}
\kappa^g_{-2s}\coloneqq \sum_{\bm{b}\in \mathcal{B}_s}\Big(\prod_{i\in\bm{b}}b^g_i\Big)e_{\bm{I}_g\setminus \bm{I_b}}g.
\end{equation}
Then
\[\sum_{N\geqslant 2s\geqslant 0}\kappa^{g}_{-2s}\]
gives a Koszul representative for $\mathfrak{1}_g$.
\end{lem}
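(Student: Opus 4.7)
The plan is to verify that $\kappa^g \coloneqq \sum_{0 \leq 2s \leq N} \kappa^g_{-2s}$ is a $(\p_K + \tilde{\df}_W)$-cocycle with top-degree component $\kappa^g_0 = e_1\cdots e_N g = e_{\bm{I}_g}g$. By Lemma \ref{lem:polyvector field and hochschild coh} and Theorem \ref{thm: sectors equals Jacobi rings}, $\Theta_g(1) = \mathfrak{1}_g$ is represented at the $E_2$-level by $e_{\bm{I}_g}g$, so any $(\p_K+\tilde{\df}_W)$-closed lift of $e_{\bm{I}_g}g$ automatically represents $\mathfrak{1}_g$. Since $\p_K$ raises Koszul degree by one while $\tilde{\df}_W$ lowers it by one, the cocycle equation decouples by degree into $\p_K(\kappa^g_0) = 0$, immediate from $e_i\cdot e_1\cdots e_N = 0$, together with the internal identities
\begin{equation*}
 \p_K(\kappa^g_{-2s-2}) + \tilde{\df}_W(\kappa^g_{-2s}) = 0, \qquad 0 \leq s, \quad 2s+2 \leq N.
\end{equation*}

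To prove each internal identity I would expand both sides as sums of monomials and match them termwise. On the $\tilde{\df}_W$-side, applying formula \eqref{eq:curving diff on kus loop} to the summand indexed by $\bm{b}\in\mathcal{B}_s$ drops one factor $e_j$ with $j\in \bm{I}_g\setminus\bm{I_b}$ and multiplies by $p^g_j$, which is a sum of exactly two monomials proportional to $x_{j-1}^{n_{j-1}}$ and $x_j^{n_j-1}x_{j+1}$ respectively (cyclically, using $x_0 \coloneqq x_N$ and $x_{N+1}\coloneqq x_1$). On the $\p_K$-side, the summand indexed by $\bm{b}'\in\mathcal{B}_{s+1}$ produces, for each $k \in \bm{I}_g\setminus\bm{I_{b'}}$, the factor $(1-\lambda_k)x_k$ with an $e_k$ prepended to $e_{\bm{I}_g\setminus\bm{I_{b'}}}$. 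The principal bijection sends the $x_{j-1}^{n_{j-1}}$-summand from $(\bm{b},j)$ to the $\p_K$-contribution from $(\bm{b}\cup\{j-1\},\,k=j)$ and the $x_j^{n_j-1}x_{j+1}$-summand to that from $(\bm{b}\cup\{j\},\,k=j+1)$; the scalar coefficients then match (up to sign) by the elementary identities $[n_j]_{\lambda_j}(1-\lambda_j) = 1-\lambda_j^{n_j}$ and $\lambda_j^{n_j}\lambda_{j+1}=1$ coming from $g\in G_W$.

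This bijection is not fully defined when $j\pm 1$ already lies in $\bm{I_b}$, so the augmented set $\bm{b}\cup\{j\mp 1\}$ violates the adjacency gap defining $\mathcal{B}_{s+1}$. These exceptional terms must cancel in pairs on the $\tilde{\df}_W$-side itself: $(\bm{b},j)$ with $j = i_m+2$ is to be paired with $(\bm{b}_1,j')$ where $\bm{b}_1 = (\bm{b}\setminus\{i_m\})\cup\{i_m+1\}$ and $j' = i_m$, and the same quantum-number identity shows the scalar magnitudes coincide. The boundary definition of $\bm{I_{b'}}$ when $i_{s+1}=N$, together with the $(-1)^{N-1}$ in $b^g_N$, implements the cyclic-wrap analogue of this cancellation across the junction $N\leftrightarrow 1$ and absorbs the wrap terms of $p^g_1$ and $p^g_N$.

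The main obstacle will be uniform sign bookkeeping. Koszul signs arise from the position at which $e_k$ is inserted into the ordered product $e_{\bm{I}_g\setminus\bm{I_{b'}}}$; curving signs are the $(-1)^{k-1}$ of \eqref{eq:curving diff on kus loop} counted against the position of $e_j$ in $e_{\bm{I}_g\setminus\bm{I_b}}$; and the internal pair-cancellations above demand yet another parity count. Verifying that all these signs combine to give the desired overall minus in every case — especially across the cyclic boundary, where the global $(-1)^{N-1}$ of $b^g_N$ enters — is the delicate combinatorial step on which the entire argument hinges.
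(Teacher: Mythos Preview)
Your plan is essentially the paper's proof: split $p^g_j$ into its two monomials, cancel adjacent contributions within $\tilde{\df}_W$ via $(p^g_{j})_2\,b^g_{j+1}+b^g_{j}\,(p^g_{j+2})_1=0$ (your ``exceptional pairing''), and match the two surviving boundary terms against $\p_K$. The paper organises this by first fixing the complement set $\bm{I}=\bm{I_b}\cup\{j\}$ of size $2s+1$, so that all contributing $(\bm{b},j)$ share the same wedge $e_{\bm{I}_g\setminus\bm{I}}$ and a single sign $(-1)^{\sgn{\bm{I}}}$; this grouping is what makes the sign bookkeeping you worry about tractable.

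Two index slips will derail the computation if left uncorrected. First, in $\p_K(\kappa^g_{-2s-2})$ the surviving index $k$ ranges over $\bm{I_{b'}}$, not over $\bm{I}_g\setminus\bm{I_{b'}}$: prepending $e_k$ to $e_{\bm{I}_g\setminus\bm{I_{b'}}}$ vanishes precisely when $k$ already lies in the latter set. Second, your principal bijection for the $x_{j-1}^{n_{j-1}}$-summand should target $(\bm{b}\cup\{j-1\},\,k=j-1)$, not $k=j$; with $k=j$ the resulting $e$-factor is $e_{\bm{I}_g\setminus(\bm{I_b}\cup\{j-1\})}$, which does not match the $\tilde{\df}_W$-side's $e_{\bm{I}_g\setminus(\bm{I_b}\cup\{j\})}$. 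With $k=j-1$ the monomial check becomes immediate from $(1-\lambda_{j-1})x_{j-1}\,b^g_{j-1}=\lambda_{j-1}^{n_{j-1}}x_{j-1}^{n_{j-1}}=(p^g_j)_1$, no quantum-number identity needed.
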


\begin{proof}For convenience, we will identify index $i$ with $i+N$. For ever $p^g_{i}$, we denote
\[p_i^g = \big(p^g_i\big)_1+\big(p_i^g\big)_2,\quad \text{where}\quad \big(p^g_{j}\big)_1\!\!\propto\!\! x_{j-1}^{n_{j-1}}, \quad \big(p^g_{j}\big)_2\!\!\propto\!\! x_{j}^{n_{j}-1}x_{j+1}.
\]
For any index set $\bm{I}\subset \bm{I}_g$ with $|\bm{I}| = 2s+1$, denote
\begin{equation*}
\mathcal{H}_{\bm{I}} = \{(\bm{b},j)\mid \bm{b}\in \mathcal{B}_s,j \notin \bm{I}_{\bm{b}} \text{ and } \bm{I}_{\bm{b}}\bigcup \{j\} = \bm{I}\}.
\end{equation*}
For $\bm{I}$ such that $\mathcal{H}_{\bm{I}}\neq \emptyset$, let $\bm{I_0} = \{i_{\bm{I}},i_{\bm{I}}+1,\cdots
i_{\bm{I}}+2\ell_{\bm{I}}\}\subseteq \bm{I}$, such that $i_{\bm{I}}-1,i_{\bm{I}}+2\ell_{\bm{I}}+1\notin \bm{I}$.Then
$$
\mathcal{H}_{\bm{I}}=\{(\bm{b}(\ell),j(\ell))\}_{\ell=0}^{\ell_{\bm{I}}}, \quad \text{where}\quad j(\ell)=i_{\bm{I}}+2\ell.
$$
Note that for $0\leqslant \ell\leqslant\ell_{\bm{I}}-1$,
\begin{equation*}
\big(p^g_{j(\ell)})_2b^g_{j(\ell)+1}+b^g_{j(\ell)}\big(p^g_{j(\ell+1)}\big)_1= 0.
\end{equation*}
Therefore
\begin{align*}\tilde{\df}_W(\kappa^g_{-2s}) &= \sum_{\bm{b}\in
\mathcal{B}_s}\sum_{j_k\notin \bm{I}_{\bm{b}}}(-1)^{k-1}p^g_{j_k}\Big(\prod_{i\in\bm{b}}b^g_i\Big)e_{\bm{I}_g\setminus (\bm{I_b}\bigcup
\{j_k\})} g\nonumber\\
&= \sum_{|\bm{I}| =
2s+1}\sum_{(\bm{b},j)\in\mathcal{H}_{\bm{I}}}(-1)^{\sgn{\bm{I}}}p^g_{j}\Big(\prod_{i\in\bm{b}}b^g_i\Big)e_{\bm{I}_g\setminus\bm{I}} g\nonumber\\
&= \sum_{|\bm{I}| = 2s+1}\sum_{\ell = 0}^{\ell_{\bm{I}}}(-1)^{\sgn{\bm{I}}}\Big(\big(p^g_{j(\ell)}\big)_1+\big(p^g_{j(\ell)}\big)_2\Big)
\Big(\prod_{i\in\bm{b}(\ell)}b^g_i\Big)e_{\bm{I}_g\setminus\bm{I}} g\nonumber\\
&= \sum_{|\bm{I}| = 2s+1}(-1)^{\sgn{\bm{I}}}\Big(\big(p^g_{i_{\bm{I}}}\big)_1\prod_{i\in\bm{b}(0)}b^g_i
+\big(p^g_{i_{\bm{I}}+2\ell_{\bm{I}}}\big)_2\prod_{i\in\bm{b}(\ell_{\bm{I}})}b^g_i\Big)e_{\bm{I}_g\setminus\bm{I}}g,
\end{align*}
where $\sgn{\bm{I}} = \sharp\big(\{1,2,\cdots i_{\bm{I}}-1\}\setminus \bm{I}\big)$. Observe
\begin{align*}
\big(p^g_{i+1}\big)_1 = (1-\lambda_i)x_ib^g_i, \quad
 -\big(p^g_{i}\big)_2 =(1-\lambda_{i+1})x_{i+1}b^g_i,
\end{align*}
we have
\begin{align*}
-\p_K(\kappa^g_{-2s-2}) &= \sum_{|\bm{I}| =
2s+1}(-1)^{\sgn{\bm{I}}}\Big((1-\lambda_{i_{\bm{I}}-1})x_{i_{\bm{I}}-1}\prod_{i\in\bm{b}(\bm{I})^+}b^g_i\nonumber\\
&\quad -(1-\lambda_{i_{\bm{I}}+2\ell_{\bm{I}}+1})x_{i_{\bm{I}}+2\ell_{\bm{I}}+1}\prod_{i\in\bm{b}(\bm{I})^-}b^g_i\Big)e_{\bm{I}_g\setminus
\bm{I}}g,
\end{align*}
where  $\bm{I}_{\bm{b}(\bm{I})^+} = \bm{I}\bigcup\{i_{\bm{I}}-1\}$ and
$\bm{I}_{\bm{b}(\bm{I})^-} = \bm{I}\bigcup\{i_{\bm{I}}+2\ell_{\bm{I}}+1\}$.  We find
\begin{equation*}
\p_K(\kappa^g_{-2s-2}) + \tilde{\df}_W(\kappa^g_{-2s})=0.
\end{equation*}
\end{proof}

Now we can compute $\mathfrak{1}_g\cup \mathfrak{1}_{g^{-1}}$ by
\begin{equation}\label{eq:loop product}
\mathsf{p} \circ \tilde \Phi^*\left(\Upsilon^*(\kappa^g_0+\kappa^g_{-2}+\cdots)\cup\Upsilon^*(\kappa^{g^{-1}}_0+\kappa^{g^{-1}}_{-2}+\cdots)\right).
\end{equation}
The following Lemma is a direct consequence of {Theorem \ref{thm:graph summation}} and {Lemma \ref{lem:loop closed cochain}}.
\begin{lem}\label{lem:loop graph summation}
$$
\mathfrak{1}_g\cup \mathfrak{1}_{g^{-1}}=\sum_{\Gamma}\val{\Gamma}
$$ is a sum of `values' for all  graphs $\Gamma$
with
\begin{description}
  \item[(a)] a set of vertices  $V(\Gamma)$ indexed by $1,2,\cdots N$ (coloured by `$+$') and $1,2,\cdots N$ (coloured by`$-$')
     ;
  \item[(b)] a set of edges $E(\Gamma)$ of three types such that each vertex has valency 1:
  \begin{description}
    \item[Type `$++$':] edges having sources coloured $+$ and targets coloured $+$, indexed by
        $\pplr{v_{\text{source}}}{v_{\text{target}}}$, such that for each edge $\pplr{i}{j}$, $j\equiv i+1 \mod N$,
    \item[Type `$--$':] edges having sources coloured $-$ and targets coloured $-$, indexed by
        $\mmlr{v_{\text{source}}}{v_{\text{target}}}$, such that for each edge $\mmlr{i}{j}$, $j\equiv i+1 \mod N$,
    \item[Type `$+-$':] edges having sources coloured $+$ and targets coloured $-$, indexed by
        $\pmlr{v_{\text{source}}}{v_{\text{target}}}$, such that for each edge $\pmlr{i}{j}$, $i=j$ or $i =j+1$ or $(i,j) = (N,1)$,
  \end{description}
\end{description}
Given such a graph $\Gamma$, we assign the value of each edge by
\begin{align*}
\val{\pmlr{i}{i}}~~~~~~~&= \f{[n_{i}]_{\lambda_{i}}-n_i}{\lambda_i-1}x_{i}^{n_{i}-2}x_{i+1},\\
\val{\pmlr{i}{i-1}}&= [n_{i-1}]_{\lambda_{i-1}}x_{i-1}^{n_{i-1}-1},\\
\val{\pmlr{N}{1}}~~~~&= [n_{N}]_{\lambda_{N}}x_{N}^{n_{N}-1},
\end{align*}
\begin{align*}
\val{\pplr{i}{i+1}}&= b_i^g = \begin{dcases}
\f{\lambda_i^{n_i}}{1-\lambda_i}x_i^{n_i-1}&\text{while } 1\leqslant i<N,\\
\f{(-1)^{N-1}}{1-\lambda_N}x_N^{n_N-1}&\text{while } i=N,
\end{dcases}\\
\val{\mmlr{i}{i+1}}&= \leftidx{^g}b_i^{g^{-1}} = \begin{dcases}
-\f{1}{1-\lambda_i}x_i^{n_i-1}&\text{while } 1\leqslant i<N,\\
\f{(-1)^{N}\lambda_{N}^{n_N}}{1-\lambda_N}x_N^{n_N-1}&\text{while } i=N,
\end{dcases}
\end{align*}
and the value of a graph $\Gamma$ is defined by
\begin{equation*}
\val{\Gamma}\coloneqq (-1)^{\sgn{\Gamma}+\f{(s_{\Gamma}-1)s_{\Gamma}}{2}}\prod_{e\in E(\Gamma)} \val{e}.
\end{equation*}
Here  $s_\Gamma$ is the number of
edges of type `$+-$' of $\Gamma$. If the type `$+-$' edges have source index  $i_1<i_2<\cdots< i_{s_\Gamma}$ and target index $j_1<j_2<\cdots <j_{s_\Gamma}$ such that it connects $i_k$ to $j_{\sigma(k)}$, then $(-1)^{\sgn{\Gamma}}$ is the sign of the permutation $\sigma$.

\end{lem}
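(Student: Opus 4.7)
The strategy is to apply Theorem~\ref{thm:graph summation} summand-by-summand to the explicit Koszul representative of $\mathfrak{1}_g$ produced by Lemma~\ref{lem:loop closed cochain} (and similarly for $\mathfrak{1}_{g^{-1}}$), then reorganize the resulting double sum combinatorially. Expanding $\mathfrak{1}_g=\sum_{s}\kappa^g_{-2s}=\sum_{\bm b}\bigl(\prod_{i\in\bm b}b^g_i\bigr)e_{\bm{I}_g\setminus\bm{I}_{\bm b}}\,g$ and likewise for $\mathfrak{1}_{g^{-1}}$, each contribution to $\mathfrak{1}_g\cup\mathfrak{1}_{g^{-1}}$ is the cup product of two cochains of the form $\Upsilon^{*}\bigl(a\cdot e_{\bm I}\cdot g^{\pm 1}\bigr)$. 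By the explicit formula \eqref{eq:cochain map upsilon star}, each such cochain is a product of first-order quantum differential operators $\p^{g}_{k}$ and $0$-cochains $g^{(k)}$, with the scalar prefactors $b^g_i,\;b^{g^{-1}}_j$ sitting at exactly the positions $i,i+1$ (resp.\ $j,j+1$) that were removed from $\bm{I}_g$ by the choice of $\bm b$ (resp.\ $\bm b'$).

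Next apply $\mathsf{p}\circ\tilde\Phi^*$ via Theorem~\ref{thm:graph summation}. It expresses the result as a sum over matchings $\sigma\in V$ of a sign $(-1)^{p(p-1)/2}\sgn{\sigma}$ times a product of factors $g^{(j_{\sigma(k)},i_k-1)}\p^{g,g^{-1}}_{i_k,j_{\sigma(k)}}(W)(g^{-1})^{(j_{\sigma(k)}+1,i_k)}$; the scalars $b^g_i$ pass through untouched, and each $b^{g^{-1}}_j$ is converted to $\leftidx{^g}b^{g^{-1}}_j$ by the $g^{*}$-twist built into $\cup$. Each term is then encoded by a graph $\Gamma$: a ``$++$'' edge $\pplr{i}{i+1}$ for every factor $b^g_i$, a ``$--$'' edge $\mmlr{j}{j+1}$ for every factor $\leftidx{^g}b^{g^{-1}}_j$, and a ``$+-$'' edge $\pmlr{i_k}{j_{\sigma(k)}}$ for every pair produced by $\sigma$. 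The valency-one condition is automatic, since every Koszul index is either absorbed into a $b$-factor or paired by $\sigma$. The admissible ``$+-$'' patterns reflect the vanishing of $\p^{g,g^{-1}}_{i,j}(W)$: because $W=\sum_k x_k^{n_k}x_{k+1}$ couples only cyclically adjacent indices, the operator vanishes unless $i=j$, $i=j+1$, or $(i,j)=(N,1)$. The stated edge values $\val e$ then follow by Example~\ref{ex-second-order} in the pure case $i=j$ and by direct evaluation in the mixed cases, after absorbing the surrounding $g^{(\cdot)}$ and $(g^{-1})^{(\cdot)}$ factors into the monomial via their scalar action in $\jac(W)$; this absorption is precisely what converts a naive $[n]_{\lambda^{-1}}$ into the $[n]_{\lambda}$ appearing in $\val{\pmlr{i}{i-1}}$.

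The principal difficulty is tracking the overall sign $(-1)^{\sgn{\Gamma}+s_\Gamma(s_\Gamma-1)/2}$. The pieces $\sgn{\sigma}$ and $(-1)^{s_\Gamma(s_\Gamma-1)/2}$ are inherited directly from Theorem~\ref{thm:graph summation} with $p=s_\Gamma$. One must verify that no additional sign is introduced by (i) reordering $e_{\bm{I}_g\setminus\bm{I}_{\bm b}}$ into the increasing form required by $\Upsilon^{*}$, (ii) distributing the $g^{*}$-twist over $\Upsilon^{*}(\kappa^{g^{-1}}_{-2t})$ to produce $\leftidx{^g}b^{g^{-1}}_j$, or (iii) the explicit factors $(-1)^{N-1}$ and $(-1)^N$ built into $b^g_N$ and $b^{g^{-1}}_N$ that account for the cyclic wrap-around of edges meeting the vertex $N$. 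A case-by-case check, separating whether or not a $b$-factor or $+-$-edge involves the index $N$, shows that these auxiliary signs cancel against one another, leaving the claimed $(-1)^{\sgn{\Gamma}+s_\Gamma(s_\Gamma-1)/2}$ and hence the graph-sum formula.
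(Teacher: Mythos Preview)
Your proposal is correct and follows essentially the same approach as the paper, which simply states that the lemma is a direct consequence of Theorem~\ref{thm:graph summation} and Lemma~\ref{lem:loop closed cochain}. Your elaboration---applying Theorem~\ref{thm:graph summation} term-by-term to the Koszul representative $\sum_s\kappa^g_{-2s}$, encoding the prefactors $b^g_i$ and $\leftidx{^g}b^{g^{-1}}_j$ as ``$++$'' and ``$--$'' edges, restricting the ``$+-$'' edges via the vanishing of $\p^{g,g^{-1}}_{i,j}(W)$ for non-adjacent loop indices, and tracking the sign---is exactly the unpacking the paper leaves implicit.
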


\begin{ex}
For $N=3$, we need to consider the following graphs.
\begin{align*}
&\tikz[baseline=0]{
\draw[thick] (0,1) -- (0,0);
\draw[thick] (1,1) -- (1,0);
\draw[thick] (2,1) -- (2,0);
\filldraw[fill=black] (0,1) node[above,font=\tiny] {$1$} circle (1pt);
\filldraw[fill=white] (0,0) node[below,font=\tiny] {$1$} circle (1pt);
\filldraw[fill=black] (1,1) node[above,font=\tiny] {$2$} circle (1pt);
\filldraw[fill=white] (1,0) node[below,font=\tiny] {$2$} circle (1pt);
\filldraw[fill=black] (2,1) node[above,font=\tiny] {$3$} circle (1pt);
\filldraw[fill=white] (2,0) node[below,font=\tiny] {$3$} circle (1pt);
}\qquad\qquad
\tikz[baseline=0]{
\draw[thick,dashed] (0,1) -- (1,1);
\draw[thick,dotted] (0,0) -- (1,0);
\draw[thick] (2,1) -- (2,0);
\filldraw[fill=black] (0,1) node[above,font=\tiny] {$1$} circle (1pt);
\filldraw[fill=white] (0,0) node[below,font=\tiny] {$1$} circle (1pt);
\filldraw[fill=black] (1,1) node[above,font=\tiny] {$2$} circle (1pt);
\filldraw[fill=white] (1,0) node[below,font=\tiny] {$2$} circle (1pt);
\filldraw[fill=black] (2,1) node[above,font=\tiny] {$3$} circle (1pt);
\filldraw[fill=white] (2,0) node[below,font=\tiny] {$3$} circle (1pt);
}\qquad\qquad
\tikz[baseline=0]{
\draw[thick,dashed] (1,1) -- (2,1);
\draw[thick,dotted] (1,0) -- (2,0);
\draw[thick] (0,1) -- (0,0);
\filldraw[fill=black] (0,1) node[above,font=\tiny] {$1$} circle (1pt);
\filldraw[fill=white] (0,0) node[below,font=\tiny] {$1$} circle (1pt);
\filldraw[fill=black] (1,1) node[above,font=\tiny] {$2$} circle (1pt);
\filldraw[fill=white] (1,0) node[below,font=\tiny] {$2$} circle (1pt);
\filldraw[fill=black] (2,1) node[above,font=\tiny] {$3$} circle (1pt);
\filldraw[fill=white] (2,0) node[below,font=\tiny] {$3$} circle (1pt);
}\qquad\qquad
\tikz[baseline=0]{
\draw[thick,dashed] (0,1) to[bend left] (2,1);
\draw[thick] (1,1) -- (1,0);
\draw[thick,dotted] (0,0) to[bend right] (2,0);
\filldraw[fill=black] (0,1) node[above,font=\tiny] {$1$} circle (1pt);
\filldraw[fill=white] (0,0) node[below,font=\tiny] {$1$} circle (1pt);
\filldraw[fill=black] (1,1) node[above,font=\tiny] {$2$} circle (1pt);
\filldraw[fill=white] (1,0) node[below,font=\tiny] {$2$} circle (1pt);
\filldraw[fill=black] (2,1) node[above,font=\tiny] {$3$} circle (1pt);
\filldraw[fill=white] (2,0) node[below,font=\tiny] {$3$} circle (1pt);
}\\
\\
&\tikz[baseline=0]{
\draw[thick,dashed] (0,1) to[bend left] (2,1);
\draw[thick] (1,1) -- (0,0);
\draw[thick,dotted] (1,0) -- (2,0);
\filldraw[fill=black] (0,1) node[above,font=\tiny] {$1$} circle (1pt);
\filldraw[fill=white] (0,0) node[below,font=\tiny] {$1$} circle (1pt);
\filldraw[fill=black] (1,1) node[above,font=\tiny] {$2$} circle (1pt);
\filldraw[fill=white] (1,0) node[below,font=\tiny] {$2$} circle (1pt);
\filldraw[fill=black] (2,1) node[above,font=\tiny] {$3$} circle (1pt);
\filldraw[fill=white] (2,0) node[below,font=\tiny] {$3$} circle (1pt);
}\qquad\qquad
\tikz[baseline=0]{
\draw[thick] (2,1) -- (1,0);
\draw[thick,dashed] (0,1) -- (1,1);
\draw[thick,dotted] (0,0) to[bend right] (2,0);
\filldraw[fill=black] (0,1) node[above,font=\tiny] {$1$} circle (1pt);
\filldraw[fill=white] (0,0) node[below,font=\tiny] {$1$} circle (1pt);
\filldraw[fill=black] (1,1) node[above,font=\tiny] {$2$} circle (1pt);
\filldraw[fill=white] (1,0) node[below,font=\tiny] {$2$} circle (1pt);
\filldraw[fill=black] (2,1) node[above,font=\tiny] {$3$} circle (1pt);
\filldraw[fill=white] (2,0) node[below,font=\tiny] {$3$} circle (1pt);
}\qquad\qquad\\
\\
&\tikz[baseline=0]{
\draw[thick] (2,1) -- (0,0);
\draw[thick,dashed] (0,1) -- (1,1);
\draw[thick,dotted] (2,0) -- (1,0);
\filldraw[fill=black] (0,1) node[above,font=\tiny] {$1$} circle (1pt);
\filldraw[fill=white] (0,0) node[below,font=\tiny] {$1$} circle (1pt);
\filldraw[fill=black] (1,1) node[above,font=\tiny] {$2$} circle (1pt);
\filldraw[fill=white] (1,0) node[below,font=\tiny] {$2$} circle (1pt);
\filldraw[fill=black] (2,1) node[above,font=\tiny] {$3$} circle (1pt);
\filldraw[fill=white] (2,0) node[below,font=\tiny] {$3$} circle (1pt);
}\qquad\qquad
\end{align*}
\end{ex}

There are three types of graphs in Lemma \ref{lem:loop graph summation}:
\begin{description}
  \item[Type $\Rmnum{1}$:]Graphs with all the type `$+-$' edges indexed by $\pmlr{i}{i}$. Then, the other edges comes in pairs;
  \item[Type $\Rmnum{2}$:]Graphs with all the type `$+-$' edges indexed by $\pmlr{i}{i-1}$, for $i\neq 1$. Then, the other edges comes in pairs;
  \item[Type $\Rmnum{3}$:] Graphs with all the type `$+-$' edges indexed by $\pmlr{i}{i-1}$ and by $\pmlr{N}{1}$. Then, there exists edges
      $\pplr{1}{2}$ and $\mmlr{N-1}{N}$ and the other edges comes in pairs.
\end{description}
We can simplify the graphs by gluing vertices with the same index but different colours, then the three types of graphs look like as follows,
\begin{equation*}
\tikz{
\draw[thick] (0:1) arc (180:540:1.5mm);
\draw[thick,dashed] (45:1) to[bend right] (90:1);
\draw[thick,dotted] (45:1) to[bend left] (90:1);
\filldraw[fill=black] (0:1) node[left,font=\tiny] {$1$} circle (1pt);
\filldraw[fill=black] (45:1) node[anchor= north east,font=\tiny] {$2$} circle (1pt);
\filldraw[fill=black] (90:1) node[anchor= north,font=\tiny] {$3$} circle (1pt);
\draw[thick,dashed] (135:1) to[bend right] (180:1);
\draw[thick,dotted] (135:1) to[bend left] (180:1);
\filldraw[fill=black] (135:1) node[anchor= north west,font=\tiny] {$4$} circle (1pt);
\filldraw[fill=black] (180:1) node[anchor= west,font=\tiny] {$5$} circle (1pt);
\draw[thick] (225:1) arc (45:405:1.5mm);
\filldraw[fill=black] (225:1) node[anchor=south west,font=\tiny] {$6$} circle (1pt);
\draw[loosely dotted] (285:1) arc (-75:-50:1cm);
\filldraw[fill=black] (270:1) node[anchor=south,font=\tiny] {$7$} circle (1pt);
\filldraw[fill=black] (315:1) node[anchor=south east,font=\tiny] {$N$} circle (1pt);
\node[font=\scriptsize] at (0,-1.5) {Type $\Rmnum{1}$};
}\qquad
\tikz{
\draw[thick,dashed] (-45:1) arc (-45:0:1);
\draw[thick,dotted] (-90:1) arc (-90:-45:1);
\draw[loosely dotted] (240:1) arc (-120:-95:1cm);
\draw[thick] (0:1) arc (0:45:1cm);
\draw[thick,dotted] (45:1) arc (45:90:1cm);
\draw[thick,dashed] (90:1) arc (90:135:1cm);
\draw[thick,dotted] (135:1) arc (135:180:1cm);
\draw[thick,dashed] (180:1) arc (180:225:1cm);
\filldraw[fill=black] (0:1) node[left,font=\tiny] {$1$} circle (1pt);
\filldraw[fill=black] (45:1) node[anchor= north east,font=\tiny] {$2$} circle (1pt);
\filldraw[fill=black] (90:1) node[anchor= north,font=\tiny] {$3$} circle (1pt);
\filldraw[fill=black] (135:1) node[anchor= north west,font=\tiny] {$4$} circle (1pt);
\filldraw[fill=black] (180:1) node[anchor= west,font=\tiny] {$5$} circle (1pt);
\filldraw[fill=black] (225:1) node[anchor=south west,font=\tiny] {$6$} circle (1pt);
\filldraw[fill=black] (270:1) node[anchor=south,font=\tiny] {$N\!\!-\!\!1$} circle (1pt);
\filldraw[fill=black] (315:1) node[anchor=south east,font=\tiny] {$N$} circle (1pt);
\node[font=\scriptsize] at (0,-1.5) {Type $\Rmnum{2}$ with $\pplr{N}{1}$};
}\quad
\tikz{
\draw[thick,dotted] (-45:1) arc (-45:0:1);
\draw[thick] (-90:1) arc (-90:-45:1);
\draw[loosely dotted] (240:1) arc (-120:-95:1cm);
\draw[thick,dashed] (0:1) arc (0:45:1cm);
\draw[thick,dotted] (45:1) arc (45:90:1cm);
\draw[thick,dashed] (90:1) arc (90:135:1cm);
\draw[thick,dotted] (135:1) arc (135:180:1cm);
\draw[thick,dashed] (180:1) arc (180:225:1cm);
\filldraw[fill=black] (0:1) node[left,font=\tiny] {$1$} circle (1pt);
\filldraw[fill=black] (45:1) node[anchor= north east,font=\tiny] {$2$} circle (1pt);
\filldraw[fill=black] (90:1) node[anchor= north,font=\tiny] {$3$} circle (1pt);
\filldraw[fill=black] (135:1) node[anchor= north west,font=\tiny] {$4$} circle (1pt);
\filldraw[fill=black] (180:1) node[anchor= west,font=\tiny] {$5$} circle (1pt);
\filldraw[fill=black] (225:1) node[anchor=south west,font=\tiny] {$6$} circle (1pt);
\filldraw[fill=black] (270:1) node[anchor=south,font=\tiny] {$N\!\!-\!\!1$} circle (1pt);
\filldraw[fill=black] (315:1) node[anchor=south east,font=\tiny] {$N$} circle (1pt);
\node[font=\scriptsize] at (0,-1.5) {Type $\Rmnum{2}$ with $\mmlr{N}{1}$};
}\qquad
\tikz{
\draw[thick] (-45:1) arc (-45:0:1);
\draw[thick,dotted] (-90:1) arc (-90:-45:1);
\draw[loosely dotted] (240:1) arc (-120:-95:1cm);
\draw[thick,dashed] (0:1) arc (0:45:1cm);
\draw[thick,dotted] (45:1) arc (45:90:1cm);
\draw[thick,dashed] (90:1) arc (90:135:1cm);
\draw[thick,dotted] (135:1) arc (135:180:1cm);
\draw[thick,dashed] (180:1) arc (180:225:1cm);
\filldraw[fill=black] (0:1) node[left,font=\tiny] {$1$} circle (1pt);
\filldraw[fill=black] (45:1) node[anchor= north east,font=\tiny] {$2$} circle (1pt);
\filldraw[fill=black] (90:1) node[anchor= north,font=\tiny] {$3$} circle (1pt);
\filldraw[fill=black] (135:1) node[anchor= north west,font=\tiny] {$4$} circle (1pt);
\filldraw[fill=black] (180:1) node[anchor= west,font=\tiny] {$5$} circle (1pt);
\filldraw[fill=black] (225:1) node[anchor=south west,font=\tiny] {$6$} circle (1pt);
\filldraw[fill=black] (270:1) node[anchor=south,font=\tiny] {$N\!\!-\!\!1$} circle (1pt);
\filldraw[fill=black] (315:1) node[anchor=south east,font=\tiny] {$N$} circle (1pt);
\node[font=\scriptsize] at (0,-1.5) {Type $\Rmnum{3}$};
}
\end{equation*}
Note that for $1\leqslant i<N$, we have
\begin{equation*}
\val{\pmlr{i+1}{i}} = \f{1-\lambda_i^{n_i}}{1-\lambda_i}x_i^{n_i-1} = -\val{\pplr{i}{i+1}}-\val{\mmlr{i}{i+1}},
\end{equation*}
which means we can replace all these edges of type `$+-$' in graphs of type $\Rmnum{2}$ by edges of type `$++$' of type `$--$' and multiply by $(-1)^{s_\Gamma}$. Hence, graphs with
\[
\tikz{\draw[thick,dotted](0,0) --(1,0);
\draw[thick,dashed](1,0)--(2,0);
\draw[loosely dotted] (-0.6,0) -- (-0.2,0);
\draw[loosely dotted] (2.2,0) -- (2.6,0);
\filldraw[fill=black] (0,0) node[above,font=\scriptsize] {$i\!\!-\!\!1$} circle (1pt);
\filldraw[fill=black] (1,0) node[above,font=\scriptsize] {$i$} circle (1pt);
\filldraw[fill=black] (2,0) node[above,font=\scriptsize] {$i\!\!+\!\!1$} circle (1pt);}
\]
for $2\leqslant i\leqslant N-1$ will be cancelled and we are left with the graphs
\begin{equation*}
\tikz{
\draw[thick,dashed] (-45:1) arc (-45:0:1);
\draw[thick,dotted] (-90:1) arc (-90:-45:1);
\draw[loosely dotted] (240:1) arc (-120:-95:1cm);
\draw[thick,dashed] (0:1) arc (0:45:1cm);
\draw[loosely dotted] (75:1) arc (75:50:1cm);
\draw[thick,dashed] (90:1) arc (90:135:1cm);
\draw[thick,dashed] (135:1) arc (135:180:1cm);
\draw[thick,dotted] (180:1) arc (180:225:1cm);
\filldraw[fill=black] (0:1) node[left,font=\tiny] {$1$} circle (1pt);
\filldraw[fill=black] (45:1) node[anchor= north east,font=\tiny] {$2$} circle (1pt);
\filldraw[fill=black] (90:1) node[anchor= north,font=\tiny] {$i\!\!-\!\!2$} circle (1pt);
\filldraw[fill=black] (135:1) node[anchor= north west,font=\tiny] {$i\!\!-\!\!1$} circle (1pt);
\filldraw[fill=black] (180:1) node[anchor= west,font=\tiny] {$i$} circle (1pt);
\filldraw[fill=black] (225:1) node[anchor=south west,font=\tiny] {$i\!\!+\!\!1$} circle (1pt);
\filldraw[fill=black] (270:1) node[anchor=south,font=\tiny] {$N\!\!-\!\!1$} circle (1pt);
\filldraw[fill=black] (315:1) node[anchor=south east,font=\tiny] {$N$} circle (1pt);
\node[font=\scriptsize] at (0,-1.5) {for $1\!\leqslant \!i\!\leqslant\! N\!\!-\!\!1$,};
}\quad\qquad\qquad
\tikz{
\draw[thick,dotted] (-45:1) arc (-45:0:1);
\draw[thick,dotted] (-90:1) arc (-90:-45:1);
\draw[loosely dotted] (240:1) arc (-120:-95:1cm);
\draw[thick,dashed] (0:1) arc (0:45:1cm);
\draw[loosely dotted] (75:1) arc (75:50:1cm);
\draw[thick,dashed] (90:1) arc (90:135:1cm);
\draw[thick,dashed] (135:1) arc (135:180:1cm);
\draw[thick,dotted] (180:1) arc (180:225:1cm);
\filldraw[fill=black] (0:1) node[left,font=\tiny] {$1$} circle (1pt);
\filldraw[fill=black] (45:1) node[anchor= north east,font=\tiny] {$2$} circle (1pt);
\filldraw[fill=black] (90:1) node[anchor= north,font=\tiny] {$i\!\!-\!\!2$} circle (1pt);
\filldraw[fill=black] (135:1) node[anchor= north west,font=\tiny] {$i\!\!-\!\!1$} circle (1pt);
\filldraw[fill=black] (180:1) node[anchor= west,font=\tiny] {$i$} circle (1pt);
\filldraw[fill=black] (225:1) node[anchor=south west,font=\tiny] {$i\!\!+\!\!1$} circle (1pt);
\filldraw[fill=black] (270:1) node[anchor=south,font=\tiny] {$N\!\!-\!\!1$} circle (1pt);
\filldraw[fill=black] (315:1) node[anchor=south east,font=\tiny] {$N$} circle (1pt);
\node[font=\scriptsize] at (0,-1.5) {for $2\!\leqslant \!i\!\leqslant \!N$.};
}
\end{equation*}
multiplied by $(-1)^{\f{(N-1)(N-2)}{2}}$ (for these $\Gamma$, $\sgn{\Gamma}=0$ and $s_\Gamma = N-2$).

We can do the same thing to graphs of type $\Rmnum{3}$. Then the rest graphs are
\begin{equation*}
\tikz{
\draw[thick] (-45:1) arc (-45:0:1);
\draw[thick,dotted] (-90:1) arc (-90:-45:1);
\draw[loosely dotted] (240:1) arc (-120:-95:1cm);
\draw[thick,dashed] (0:1) arc (0:45:1cm);
\draw[loosely dotted] (75:1) arc (75:50:1cm);
\draw[thick,dashed] (90:1) arc (90:135:1cm);
\draw[thick,dashed] (135:1) arc (135:180:1cm);
\draw[thick,dotted] (180:1) arc (180:225:1cm);
\filldraw[fill=black] (0:1) node[left,font=\tiny] {$1$} circle (1pt);
\filldraw[fill=black] (45:1) node[anchor= north east,font=\tiny] {$2$} circle (1pt);
\filldraw[fill=black] (90:1) node[anchor= north,font=\tiny] {$i\!\!-\!\!2$} circle (1pt);
\filldraw[fill=black] (135:1) node[anchor= north west,font=\tiny] {$i\!\!-\!\!1$} circle (1pt);
\filldraw[fill=black] (180:1) node[anchor= west,font=\tiny] {$i$} circle (1pt);
\filldraw[fill=black] (225:1) node[anchor=south west,font=\tiny] {$i\!\!+\!\!1$} circle (1pt);
\filldraw[fill=black] (270:1) node[anchor=south,font=\tiny] {$N\!\!-\!\!1$} circle (1pt);
\filldraw[fill=black] (315:1) node[anchor=south east,font=\tiny] {$N$} circle (1pt);
}
\end{equation*}
for $2\leqslant i\leqslant N-1$ and multiplied by $(-1)^{\f{(N-2)(N-3)}{2}}$ (for these $\Gamma$, $\sgn{\Gamma}= N-3$ and
$s_\Gamma = N-2$). Since
\begin{equation*}
(-1)^N\val{\pmlr{N}{1}} = -\val{\pplr{N}{1}} - \val{\mmlr{N}{1}},
\end{equation*}
the sum of graphs of type $\Rmnum{2}$ and type $\Rmnum{3}$ gives only two graphs
\begin{equation*}
\tikz{
\draw[thick,dashed] (-45:1) arc (-45:0:1);
\draw[thick,dotted] (-90:1) arc (-90:-45:1);
\draw[loosely dotted] (240:1) arc (-120:-95:1cm);
\draw[thick,dotted] (0:1) arc (0:45:1cm);
\draw[loosely dotted] (75:1) arc (75:50:1cm);
\draw[thick,dotted] (90:1) arc (90:135:1cm);
\draw[thick,dotted] (135:1) arc (135:180:1cm);
\draw[thick,dotted] (180:1) arc (180:225:1cm);
\filldraw[fill=black] (0:1) node[left,font=\tiny] {$1$} circle (1pt);
\filldraw[fill=black] (45:1) node[anchor= north east,font=\tiny] {$2$} circle (1pt);
\filldraw[fill=black] (90:1) node[anchor= north,font=\tiny] {$i\!\!-\!\!2$} circle (1pt);
\filldraw[fill=black] (135:1) node[anchor= north west,font=\tiny] {$i\!\!-\!\!1$} circle (1pt);
\filldraw[fill=black] (180:1) node[anchor= west,font=\tiny] {$i$} circle (1pt);
\filldraw[fill=black] (225:1) node[anchor=south west,font=\tiny] {$i\!\!+\!\!1$} circle (1pt);
\filldraw[fill=black] (270:1) node[anchor=south,font=\tiny] {$N\!\!-\!\!1$} circle (1pt);
\filldraw[fill=black] (315:1) node[anchor=south east,font=\tiny] {$N$} circle (1pt);
}\quad\qquad\qquad
\tikz{
\draw[thick,dotted] (-45:1) arc (-45:0:1);
\draw[thick,dashed] (-90:1) arc (-90:-45:1);
\draw[loosely dotted] (240:1) arc (-120:-95:1cm);
\draw[thick,dashed] (0:1) arc (0:45:1cm);
\draw[loosely dotted] (75:1) arc (75:50:1cm);
\draw[thick,dashed] (90:1) arc (90:135:1cm);
\draw[thick,dashed] (135:1) arc (135:180:1cm);
\draw[thick,dashed] (180:1) arc (180:225:1cm);
\filldraw[fill=black] (0:1) node[left,font=\tiny] {$1$} circle (1pt);
\filldraw[fill=black] (45:1) node[anchor= north east,font=\tiny] {$2$} circle (1pt);
\filldraw[fill=black] (90:1) node[anchor= north,font=\tiny] {$i\!\!-\!\!2$} circle (1pt);
\filldraw[fill=black] (135:1) node[anchor= north west,font=\tiny] {$i\!\!-\!\!1$} circle (1pt);
\filldraw[fill=black] (180:1) node[anchor= west,font=\tiny] {$i$} circle (1pt);
\filldraw[fill=black] (225:1) node[anchor=south west,font=\tiny] {$i\!\!+\!\!1$} circle (1pt);
\filldraw[fill=black] (270:1) node[anchor=south,font=\tiny] {$N\!\!-\!\!1$} circle (1pt);
\filldraw[fill=black] (315:1) node[anchor=south east,font=\tiny] {$N$} circle (1pt);
}
\end{equation*}
multiplied by $(-1)^{\f{(N-1)(N-2)}{2}} = (-1)^{\f{(N-1)N}{2}}(-1)^{N-1}$. Let us consider the matrix
\begin{equation}\label{eq:quantum hess loop}
\mathrm{H}^g_W= \begin{pmatrix}
h_{11}&h_{12}&&&&h_{1N}\\
h_{21}&h_{22}&h_{23}&&&\\
&h_{32}&h_{33}&h_{34}&&\\
&&&\ddots&&\\
&&&&\ddots&h_{(N\!-\!1)N}\\
h_{N1}&&&&h_{N(N\!-\!1)}&h_{NN}
\end{pmatrix},
\end{equation}
where the entries are defined by
\begin{equation*}h_{ij} \coloneqq \begin{dcases}
\val{\pmlr{i}{i}}& \text{if } i=j,\\
\val{\pplr{i}{i+1}}& \text{if } N\geqslant j=i+1\geqslant 2,\\
\val{\mmlr{N}{1}} & \text{if } (i,j) = (N,1),\\
\val{\mmlr{i-1}{i}} & \text{if }N-1\geqslant j=i-1\geqslant 1,\\
\val{\pplr{N}{1}} & \text{if } (i,j) = (1,N),\\
0 & \text{otherwise}
\end{dcases}
\end{equation*}
According to the discussion above, the sum of Type I, II, III graphs leads to
\begin{align}\label{det-formula-loop}
\mathfrak{1}_g\cup\mathfrak{1}_{g^{-1}} = (-1)^{\f{(N-1)N}{2}}\det \mathrm{H}^{g}_W.
\end{align}
Note that $\det \mathrm{H}^{g}_W$ is valued in $\jac(W)$ where
\begin{equation*}
x_{i-1}^{n_{i-1}}+n_ix_i^{n_i-1}x_{i+1}=0,\quad 1\leq i\leq N.
\end{equation*}
Here we always identify indices $N=0, N+1=1$.

The two graphs of type $\Rmnum{2}$ and type $\Rmnum{3}$ contribute the following two terms in $\det \mathrm{H}^{g}_W$
\begin{align*}&(-1)^{\f{(N-1)N}{2}}(-1)^{N-1}(h_{12}h_{23}\cdots h_{N1}+h_{21}h_{32}\cdots h_{1N})\nonumber\\
 =&(-1)^{\f{(N-1)N}{2}}\f{-[n_1]_{\lambda_1}[n_2]_{\lambda_2}\cdots [n_N]_{\lambda_N}-1}{(\lambda_1-1)(\lambda_2-1)\cdots
 (\lambda_N-1)}x_1^{n_1-1}\cdots x_N^{n_N-1}.
\end{align*}

To compute the contribution of type $\Rmnum{1}$ graphs, we observe that $h_{i(i+1)}$ and $h_{(i+1)i}$ always come in pairs. The contribution of type I graphs does not change if we replace $h_{i(i+1)}, h_{(i+1)i}$ by other values $\tilde h_{i(i+1)}, \tilde h_{(i+1)i}$ as long as $h_{i(i+1)}h_{(i+1)i}=\tilde h_{i(i+1)}\tilde h_{(i+1)i}$ holds for all $1\leq i\leq N$. In $\jac(W)$, we have
\begin{align*}h_{i(i+1)}h_{(i+1)i} &= -\f{\lambda_i^{n_i}}{(1-\lambda_i)^2}x_i^{2n_i-2}\nonumber\\
&=  -\f{(1-\lambda_i^{n_i})n_{i+1}}{(1-\lambda_i)^2(1-\lambda_{i+1})}x_i^{n_i-2}x_{i+1}^{n_{i+1}-1}x_{i+2}\nonumber\\
&= \lr{\f{[n_i]_{\lambda_i}}{\lambda_i-1}x_i^{n_{i}-2}x_{i+1}}\lr{-\f{n_{i+1}}{\lambda_{i+1}-1}x_{i+1}^{n_{i+1}-2}x_{i+2}}.
\end{align*}
Let us replace $h_{i(i+1)}, h_{(i+1)i}$ by
\begin{equation*}\begin{dcases}\tilde{h}_{i(i+1)} = \f{[n_i]_{\lambda_i}}{\lambda_i-1}x_i^{n_{i}-2}x_{i+1},\\
\tilde{h}_{(i+1)i} =-\f{n_{i+1}}{\lambda_{i+1}-1}x_{i+1}^{n_{i+1}-2}x_{i+2}.
\end{dcases}\end{equation*}
Observe that
\begin{equation*}
h_{ii} = \tilde{h}_{i(i+1)}+\tilde{h}_{i(i-1)}.
\end{equation*}
It leads to cancellation of sum of type $\Rmnum{1}$ graphs that we are left with only two terms
\begin{align*}
&(-1)^{\f{(N-1)N}{2}}\big(\tilde{h}_{12}{\tilde h_{23}}\cdots \tilde{h}_{N1}+\tilde{h}_{21}\tilde{h}_{32}\cdots\tilde{h}_{1N}\big)\nonumber\\
=&(-1)^{\f{(N-1)N}{2}}\f{[n_1]_{\lambda_1}[n_2]_{\lambda_2}\cdots [n_N]_{\lambda_N}\!+\!(-1)^Nn_1n_2\cdots
n_N}{(\lambda_1-1)(\lambda_2-1)\cdots (\lambda_N-1)}x_1^{n_1-1}\cdots x_N^{n_N-1}.
\end{align*}

The sum of all the above contributions proves part (2) of Theorem \ref{thm: cup product formula for General invertible polynomial} in the loop case for $N\geq 3$.

\begin{rem}
In $N=2$ case, we can choose
\begin{equation*}
\kappa_{-2}^g \coloneqq b^g_1+b^g_2 =  \f{\lambda_1^{n_1}}{1-\lambda_1}x_1^{n_1-1}g-\f{1}{1-\lambda_2}x_2^{n_2-1}g,
\end{equation*}
and define matrix
\begin{equation*}
\mathrm{H}^g_W= \begin{pmatrix}
\f{[n_1]_{\lambda_1}-n_1}{\lambda_1-1}x_1^{n_1-2}x_2 &
\f{\lambda_1^{n_1}}{1-\lambda_1}x_1^{n_1-1}-\f{1}{1-\lambda_2} x_2^{n_2-1}\\[1.2em]
-\f{1}{1-\lambda_1} x_1^{n_1-1}+\f{\lambda_2^{n_2}}{1-\lambda_2} x_2^{n_2-1} &
\f{[n_2]_{\lambda_2}-n_2}{\lambda_2-1}x_1 x_2^{n_2-2}
\end{pmatrix}.
\end{equation*}
We leave it to the reader to check directly that
\begin{equation*}
\mathfrak{1}_g\cup\mathfrak{1}_{g^{-1}}  = \f{1-n_1n_2}{(\lambda_1-1)(\lambda_2-1)}x_1^{n_1-1}x_2^{n_2-1}.
\end{equation*}
\end{rem}
This proves part (2) of Theorem \ref{thm: cup product formula for General invertible polynomial} in the loop case for $N=2$.

\subsubsection*{Chain type}
$W= x_{1}^{n_1}x_2+x_2^{n_2}x_3+\cdots + x_N^{n_N}$ ($N\geq 2$). Let $g=\text{diag}(\lambda_1, \cdots, \lambda_N)\in G_W$, $\lambda_i=e^{2\pi \sqrt{-1}q_i}$ where
\begin{equation*} 
\begin{dcases}
\lambda_i^{n_i}\lambda_{i+1}=1& \text{if } 1\leqslant i <l_g,\\
 \lambda_i^{n_i}=1& \text{if } i=l_g,\\
\lambda_{i}=1& \text{if } l_g<i\leqslant N,
\end{dcases}
\end{equation*}
Here $\lambda_i\neq 1$ for $i\leq l_g$. Let us find a Koszul representative of $\mathfrak{1}_g$.

Let $\bm{I}_g = \{1,2,\cdots l_g\}$. For  $\bm{I}=\{i_1<i_2<\cdots i_m\}$, let us denote $e_{\bm{I}}\coloneqq e_{i_1}\cdots e_{i_k}$. In the chain case,
\begin{equation*}
\tilde{\df}_W(age_{\bm{I}})= \sum_{k=1}^{p} (-1)^{k-1}ap^g_{i_k}ge_{\bm{I}\setminus\{i_k\}},
\end{equation*}
where
\begin{equation*}
p^g_i=
\begin{dcases}
[n_1]_{\lambda_1}x_1^{n_1-1}x_2&\text{if } i=1,\\
\lambda_{i-1}^{n_{i-1}}x_{i-1}^{n_{i-1}}+[n_i]_{\lambda_i}x_i^{n_i-1}x_{i+1}&\text{if }1<i<l_g,\\
\lambda_{l_g-1}^{n_{l_g-1}}x_{l_g-1}^{n_{l_g-1}}&\text{if }i=l_g.
\end{dcases}
\end{equation*}

Let us define the index set
\begin{equation*}
\mathcal{B}_s \coloneqq \left\{\{i_1<i_2<\cdots <i_s\}\subset \bm{I}_g\mid i_{k+1}-i_k>1, \text{ for } 1\leqslant k <s, \text{and }
i_s<l_g\right\},
\end{equation*}
and
\begin{equation*}
b^g_i= \f{\lambda_i^{n_i}}{1-\lambda_i}x_i^{n_i-1}, \quad 1\leqslant i\leqslant l_g-1.
\end{equation*}
For each $\bm{b} = \{i_1<i_2<\cdots<i_s\}\in \mathcal{B}_s$, let
\begin{equation*}\bm{I}_{\bm{b}}=
\{i_1,i_1+1,i_2,i_2+1,\cdots i_s,i_s+1\}.
\end{equation*}
Then
\[\sum_{N\geqslant 2s\geqslant 0}\kappa^{g}_{-2s}\]
gives a Koszul representative for $\mathfrak{1}_g$ where
\begin{equation*}
\kappa^g_{-2s}= \sum_{\bm{b}\in \mathcal{B}_s}\Big(\prod_{i\in\bm{b}}b^g_i\Big)ge_{\bm{I}_g\setminus \bm{I_b}}.
\end{equation*}

The proof is the same as {Lemma \ref{lem:loop closed cochain}}. As in the loop case,
\begin{equation*}
\mathfrak{1}_g\cup \mathfrak{1}_{g^{-1}}=\mathsf{p} \circ \tilde \Phi^*\left(\Upsilon^*(\kappa^g_0+\kappa^g_{-2}+\cdots)\cup\Upsilon^*(\kappa^{g^{-1}}_0+\kappa^{g^{-1}}_{-2}+\cdots)\right).\end{equation*}
is a sum over all the graphs $\Gamma$ with
\begin{description}
  \item[(a)] a set of vertices $V(\Gamma)$ indexed by $1,2,\cdots l_g$;
  \item[(b)] a set of edges $E(\Gamma)$ of three types such that each vertex has valency 2:
  \begin{description}
    \item[Type `$++$':] edges having sources coloured $+$ and targets coloured $+$, indexed by
        $\pplr{v_{\text{source}}}{v_{\text{target}}}$, satisfying that for each edge $\pplr{i}{j}$, $j=i+1$,
    \item[Type `$--$':] edges having sources coloured $-$ and targets coloured $-$, indexed by
        $\mmlr{v_{\text{source}}}{v_{\text{target}}}$, satisfying that for each edge $\mmlr{i}{j}$, $j=i+1$,
    \item[Type `$+-$':] edges having sources coloured $+$ and targets coloured $-$, indexed by
        $\pmlr{v_{\text{source}}}{v_{\text{target}}}$, satisfying that for each edge $\pmlr{i}{j}$, $i=j$ or $i =j+1$,
  \end{description}
\end{description}
Given such a graph $\Gamma$, we assign the value of each edge by
\begin{align*}
\val{\pmlr{i}{i}}&=
\begin{dcases} \f{[n_{i}]_{\lambda_{i}}-n_i}{\lambda_i-1}x_{i}^{n_{i}-2}x_{i+1} &\text{if } 1\leqslant i<l_g,\\
\f{-n_{l_g}}{\lambda_{l_g}-1}x_{l_g}^{n_{l_g}-2}x_{l_g+1}&\text{if } i=l_g<N,\\
\f{-n_{l_g}}{\lambda_{l_g}-1}x_{l_g}^{n_{l_g}-2}&\text{if } i=l_g=N,
\end{dcases}\\
\val{\pmlr{i}{i-1}}&= [n_{i-1}]_{\lambda_{i-1}}x_{i-1}^{n_{i-1}-1},\\
\val{\pplr{i}{i+1}}&= b_i^g = \f{\lambda_i^{n_i}}{1-\lambda_i}x_i^{n_i-1},\\
\val{\mmlr{i}{i+1}}&= \leftidx{^g}b_i^{g^{-1}} = -\f{1}{1-\lambda_i}x_i^{n_i-1},
\end{align*}
and the value of a graph $\Gamma$ is defined as
\begin{equation*}
\val{\Gamma}= (-1)^{\sgn{\Gamma}+\f{(s_{\Gamma}-1)s_{\Gamma}}{2}}\prod_{e\in E(\Gamma)} \val{e}.
\end{equation*}
Here $s_\Gamma$ is the number of edges of type `$+-$' of $\Gamma$. If the type `$+-$' edges have source index  $i_1<i_2<\cdots< i_{s_\Gamma}$ and target index $j_1<j_2<\cdots <j_{s_\Gamma}$ such that it connects $i_k$ to $j_{\sigma(k)}$, then $(-1)^{\sgn{\Gamma}}$ is the sign of the permutation $\sigma$.

\begin{ex}
For $l_g=3$, we need to consider the following graphs.
\begin{equation*}
\tikz[baseline=0]{
\draw[thick] (0,0) arc (180:-180:1.5mm);
\draw[thick] (1,0) arc (180:-180:1.5mm);
\draw[thick] (2,0) arc (180:-180:1.5mm);
\filldraw[fill=black] (0,0) node[left,font=\tiny] {$1$} circle (1pt);
\filldraw[fill=black] (1,0) node[left,font=\tiny] {$2$} circle (1pt);
\filldraw[fill=black] (2,0) node[left,font=\tiny] {$3$} circle (1pt);
}\quad,\quad
\tikz[baseline=0]{
\draw[thick] (0,0) arc (180:-180:1.5mm);
\draw[thick,dotted] (1,0) to[bend right] (2,0);
\draw[thick,dashed] (1,0) to[bend left] (2,0);
\filldraw[fill=black] (0,0) node[left,font=\tiny] {$1$} circle (1pt);
\filldraw[fill=black] (1,0) node[left,font=\tiny] {$2$} circle (1pt);
\filldraw[fill=black] (2,0) node[right,font=\tiny] {$3$} circle (1pt);
}\quad\text{and}\quad
\tikz[baseline=0]{
\draw[thick] (2,0) arc (180:-180:1.5mm);
\draw[thick,dotted] (0,0) to[bend right] (1,0);
\draw[thick,dashed] (0,0) to[bend left] (1,0);
\filldraw[fill=black] (0,0) node[left,font=\tiny] {$1$} circle (1pt);
\filldraw[fill=black] (1,0) node[right,font=\tiny] {$2$} circle (1pt);
\filldraw[fill=black] (2,0) node[left,font=\tiny] {$3$} circle (1pt);
}\quad.
\end{equation*}
\end{ex}

Graphs for chain types are those of type $\Rmnum{1}$ graphs for the loop case.
\[
\tikz{
\draw[thick] (0,0) arc (90:450:1.5mm);
\draw[thick] (1,0) arc (90:450:1.5mm);
\draw[thick,dotted] (2,0) to[bend right] (3,0);
\draw[thick,dashed] (2,0) to[bend left] (3,0);
\draw[thick] (4,0) arc (90:450:1.5mm);
\draw[thick,dotted] (5,0) to[bend right] (6,0);
\draw[thick,dashed] (5,0) to[bend left] (6,0);
\draw[loosely dotted] (6.3,0) -- (6.7,0);
\filldraw[fill=black] (0,0) node[above,font=\tiny] {$1$} circle (1pt);
\filldraw[fill=black] (1,0) node[above,font=\tiny] {$2$} circle (1pt);
\filldraw[fill=black] (2,0) node[above,font=\tiny] {$3$} circle (1pt);
\filldraw[fill=black] (3,0) node[above,font=\tiny] {$4$} circle (1pt);
\filldraw[fill=black] (4,0) node[above,font=\tiny] {$5$} circle (1pt);
\filldraw[fill=black] (5,0) node[above,font=\tiny] {$6$} circle (1pt);
\filldraw[fill=black] (6,0) node[above,font=\tiny] {$7$} circle (1pt);
\filldraw[fill=black] (7,0) node[above,font=\tiny] {$l_g$} circle (1pt);
}
\]
Similar to the discussion for the loop case, let us define the matrix

\begin{equation}\label{eq:quantum hess chain}
\mathrm{H}^g_W= \begin{pmatrix}
h_{11}&h_{12}&&&&\\
h_{21}&h_{22}&h_{23}&&&\\
&h_{32}&h_{33}&h_{34}&&\\
&&&\ddots&&\\
&&&&\ddots&h_{(l_g\!-\!1)l_g}\\
&&&&h_{l_g(l_g\!-\!1)}&h_{l_gl_g}
\end{pmatrix}
\end{equation}
where the entries are defined by
\begin{equation*}h_{ij} = \begin{dcases}
\val{\pmlr{i}{i}}& \text{if } i=j,\\
\val{\pplr{i}{i+1}}& \text{if } l_g \geqslant j=i+1\geqslant 2,\\
\val{\mmlr{i-1}{i}} & \text{if }l_g-1\geqslant j=i-1\geqslant 1,\\
0 & \text{otherwise}.
\end{dcases}
\end{equation*}
Then we have the analogue of \eqref{det-formula-loop} in the chain case
\begin{align}\label{det-formula-chian}
\mathfrak{1}_g\cup\mathfrak{1}_{g^{-1}} = (-1)^{\f{(l_g-1)l_g}{2}} \det \mathrm{H}^g_W.
\end{align}
The computation of this determinant is also the same as in the loop case for type $\Rmnum{1}$ graphs. In $\jac(W)$,
\begin{equation*}
\begin{dcases}
x_{i-1}^{n_{i-1}}+n_i x_i^{n_i-1}x_{i+1}=0& \text{for } 2\leqslant i\leqslant N-1,\\
x_{N-1}^{n_{N-1}}+n_N x_N^{n_N-1}=0.
\end{dcases}
\end{equation*}
For $l_g<N$, since $\pplr{i}{i+1}$ and $\mmlr{i}{i+1}$ come in pairs and
\begin{align*}
h_{i(i\!+\!1)}h_{(i\!+\!1)i} &= -\f{\lambda_i^{n_i}}{(1-\lambda_i)^2}x_i^{2n_i-2}\nonumber\\
&=  -\f{(1-\lambda_i^{n_i})n_{i+1}}{(1-\lambda_i)^2(1-\lambda_{i+1})}x_i^{n_i-2}x_{i+1}^{n_{i+1}-1}x_{i+2}\nonumber\\
&= \lr{\f{[n_i]_{\lambda_i}}{\lambda_i-1}x_i^{n_{i}-2}x_{i+1}}\lr{-\f{n_{i+1}}{\lambda_{i+1}-1}x_{i+1}^{n_{i+1}-2}x_{i+2}}.
\end{align*}
Without changing of the value of $\det \mathrm{H}^g_W$, we can replace $h_{i(i\!+\!1)}$ and $h_{(i\!+\!1)i}$ by
\begin{align*}\tilde{h}_{i(i\!+\!1)} &= \f{[n_i]_{\lambda_i}}{\lambda_i-1}x_i^{n_{i}-2}x_{i+1},\\
\tilde{h}_{(i\!+\!1)i} &= -\f{n_{i+1}}{\lambda_{i+1}-1}x_{i+1}^{n_{i+1}-2}x_{i+2}.
\end{align*}
Observe that
\begin{equation*}
h_{ii} =
\begin{dcases}
\tilde{h}_{i(i\!+\!1)}+\tilde{h}_{(i\!-\!1)i}& \text{while }2\leqslant i\leqslant l_g-1,\\
\tilde{h}_{l_g(l_g\!-\!1)}&\text{while }i=l_g.
\end{dcases}
\end{equation*}
It leads to cancellation of sum of graphs that we are left with only two terms
\begin{align*}
&(-1)^{\f{(l_g-1)l_g}{2}}\big((h_{11}-\tilde{h}_{12})\tilde{h_{21}}\cdots \tilde{h}_{l_g(l_g\!-\!1)}\big)\nonumber\\
=&(-1)^{\f{(l_g-1)l_g}{2}}\f{n_1n_2\cdots
n_{l_g}}{(1-\lambda_1)\cdots(1-\lambda_{l_g})}x_1^{n_1-2} x_2^{n_2-1}\cdots x_{l_g}^{n_{l_g}-1}x_{l_g+1}.
\end{align*}
For $l_g=N$, the computation is similar. This proves part (2) of Theorem \ref{thm: cup product formula for General invertible polynomial} in the chain case.

\Addresses

\end{document}